\tikzset{
  pics/torus/.style n args={3}{
    code = {
      \providecolor{pgffillcolor}{rgb}{1,1,1}
      \begin{scope}[
          yscale=cos(#3),
          outer torus/.style = {draw,line width/.expanded={\the\dimexpr2\pgflinewidth+#2*2},line join=round},
          inner torus/.style = {draw=pgffillcolor,line width={#2*2}}
        ]
        \draw[outer torus] circle(#1);\draw[inner torus] circle(#1);
        \draw[outer torus] (180:#1) arc (180:360:#1);\draw[inner torus,line cap=round] (180:#1) arc (180:360:#1);
      \end{scope}
    }
  }
}
\let\div\relax
\DeclareMathOperator\div{div}
\DeclareMathOperator\curl{curl}
\DeclareMathOperator\supp{supp}
\DeclareMathOperator\dist{dist}
\newcommand\dd[0]{\partial}
\newcommand\grad[0]{\nabla}
\newcommand\ulin[0]{u^\flat}
\newcommand\unlin[0]{u^\sharp}
\newcommand\unlinI[0]{u^{\sharp,1}}
\newcommand\unlinII[0]{u^{\sharp,2}}
\newcommand\wlin[0]{\omega^\flat}
\newcommand\wnlin[0]{\omega^\sharp}
\newcommand\eq[1]{\begin{align}{#1}\end{align}}
\newcommand\eqn[1]{\begin{align*}{#1}\end{align*}}
\newcommand\Rt[0]{{\mathbb R^3}}
\newcommand\X[3]{X_{{#2};\hspace{.01in}{#3}}^{#1}}
\newcommand\thing[0]{\gamma}
\newcommand\shell[2]{\mathcal S(#1,#2)}
\newcommand\shelltrunc[3]{\mathcal S(#1,#2;#3)}
\newtheorem{theorem}{Theorem}
\newtheorem{prop}{Proposition}
\newtheorem{lemma}{Lemma}
\newtheorem{remark}{Remark}
\newtheorem{example}{Example}
\title[Quantitative regularity for the Navier-Stokes]{Improved quantitative regularity for the Navier-Stokes equations in a scale of critical spaces}
\author{Stan Palasek}
\address{UCLA Department of Mathematics, 520 Portola Plaza, Los Angeles, CA 90025}
\email{palasek@math.ucla.edu}
\thanks{The author is grateful to Terence Tao for many helpful discussions, as well as for providing feedback on a version of the manuscript. We also thank the anonymous referee for their careful reading and comments. This work was partially funded by NSF grant DMS-1764034.}
\begin{document}

\maketitle
 
\begin{abstract}
We prove a quantitative regularity theorem and blowup criterion for classical solutions of the three-dimensional Navier-Stokes equations satisfying certain critical conditions. The solutions we consider have $\|r^{1-\frac3q}u\|_{L_t^\infty L_x^q}<\infty$ where $r=\sqrt{x_1^2+x_2^2}$ and either $q\in(3,\infty)$, or $u$ is axisymmetric and $q\in(2,3]$. Using the strategy of Tao \cite{tao}, we obtain improved subcritical estimates for such solutions depending only on the double exponential of the critical norm. One consequence is a double logarithmic lower bound on the blowup rate. We make use of some tools such as a decomposition of the solution that allows us to use energy methods in these spaces, as well as a Carleman inequality for the heat equation suited for proving quantitative backward uniqueness in cylindrical regions. 
\end{abstract}

\section{Introduction}\label{intro}

In this paper we consider classical solutions of the Navier-Stokes equations
\eqn{
\dd_tu+u\cdot\grad u+\grad p-\Delta u&=0\\
\div u&=0
}
where $u:[0,T]\times\Rt\to\Rt$ and $p:[0,T]\times\Rt\to\mathbb R$ are smooth, and the viscosity has been normalized to $1$ by rescaling. We reformulate the problem in the standard way by applying the Leray projection $\mathbb P=1-\grad\Delta^{-1}\div$, thus eliminating the pressure to obtain
\eq{\label{NS}
\dd_tu+\mathbb P\div u\otimes u-\Delta u=0.
}
An important question in the study of \eqref{NS} is when blowup can be precluded, that is, what conditions one can impose on a solution defined on $[0,T)$ that guarantee it can be continued to a larger interval $[0,T')$ for some $T'>T$. Many such criteria are known; see for example \cite{leray}, \cite{prodi}, \cite{serrin}, \cite{lady}, \cite{bkm}, \cite{ess}, \cite{seregin}, \cite{gkp}, and \cite{tao} among others. With the exception of a few recent results that are in some sense slightly supercritical (for example \cite{pan} and \cite{bpnew}), typical results are critical with respect to the Navier-Stokes scaling, meaning that they assume finiteness of a norm which is invariant when $u(x,t)$ and $p(x,t)$ are replaced by $\lambda u(\lambda x,\lambda^2t)$ and $\lambda^2p(\lambda x,\lambda^2t)$.

A notable example is the criterion of Prodi-Serrin-Ladyzhenskaya \cite{prodi,serrin,lady}, which implies that a necessary condition for blowup is that $\|u\|_{L_t^pL_x^q}$ becomes unbounded, where $\frac2p+\frac3q=1$ and $q\in(3,\infty]$. Extension of this result to the endpoint space $L_t^\infty L_x^3$ remained open until the celebrated work of Escauriaza, Seregin, and {\v{S}}ver{\'a}k \cite{ess} who proved that $\|u(t)\|_{L_x^3}$ becomes unbounded near the blowup time using additional tools including compactness methods combined with unique continuation and backward uniqueness properties of the heat equation. Despite the success of these qualitative methods, they suffer the weakness that they do not imply good quantitative bounds like the ones available in the $q>3$ cases; for example, one does not obtain a lower bound on the norm as the blowup time is approached. Recently, Tao \cite{tao} proved a quantitative version of the $L_t^\infty L_x^3$ regularity criterion, using the same Carleman estimates from \cite{ess} to show directly that if the solution concentrates strongly enough in a small region, then there must be some $L_x^3$ mass in many other spatial regions. In particular, the solution can be controlled in subcritical spaces with bounds on the order of $\exp\exp\exp(\|u\|_{L_t^\infty L_x^3}^C)$ and a blowup rate (in the limit supremum sense) of at least $(\log\log\log(T_*-t)^{-1})^{1/C}$ with $C>0$ a universal constant. These bounds are very strong compared to what can be proved using a compactness-based approach, but are much worse than, say, the $q>3$ case where the norm blows up like a power of $(T_*-t)^{-1}$.

A natural question is whether one can remove any of the exponentials from the bound on $u$, or the logarithms from the blowup rate.  This has been addressed by Barker and Prange \cite{bp} who add the additional assumption $\|u\|_{L_t^\infty L_x^{3,\infty}}\leq M$ on the Lorentz norm. In this case, some of the exponential dependence can be placed onto $M$ instead of $\|u\|_{L_t^\infty L_x^3}$ to obtain a lower bound on the blowup rate comparable to $e^{-e^M}\log(T_*-t)^{-1}$. Their method uses spatial concentration which has the advantage of producing a lower bound on the critical norm locally around the singular point, and which holds at every time near $T_*$ rather than on a sequence $t_n\uparrow T_*$.

One of the main challenges in improving the bounds without assuming additional control on the solution is finding suitable spacetime regions where the solution is regular enough to apply Carleman inequalities, particularly the Carleman inequality related to backward uniqueness of the heat equation. This has been done by using $\epsilon$-regularity criteria, either based on enstrophy localization (see \cite{tao}, following \cite{othertao}) or Caffarelli-Kohn-Nirenberg partial regularity (see \cite{bp}, following \cite{ckn}). In order to find a region for which there is a known backward uniqueness Carleman inequality and where a suitable norm is sufficiently small, it has been necessary to use the pigeonhole principle which is the source of the one of exponentials in the final result.

We will show here that for a particular scale of critical spaces, one can avoid this application of the pigeonhole principle and thus prove quantitative bounds that depend only on a double exponential of the norm of $u$, and consequently a blowup rate of at least $(\log\log(T_*-t)^{-1})^{c}$, with $c>0$ a constant depending only on the parameters of the space. Another aim of this paper is to demonstrate that the quantitative approach to regularity initiated in \cite{tao} has implications for the study of Navier-Stokes solutions with axial symmetry and, more broadly, solutions without any symmetry that lie in critical spaces defined in relation to an axis. The literature on critical regularity criteria for the three-dimensional axisymmetric Navier-Stokes is too vast to list exhaustively, but one can see for instance \cite{chae}, \cite{ss}, \cite{lei}, \cite{pan}, \cite{chen}, \cite{albritton}, \cite{sz}, and \cite{seregin2020}. Let us highlight an important example of such a result due to Koch, Nadirashvili, Seregin, and {\v{S}}ver{\'a}k \cite{knss}, and independently Chen, Strain, Yau, and Tsai \cite{csyt}, which says that (perhaps with some additional mild assumptions), if $u$ is axisymmetric and $ru\in L_{t,x}^\infty$, then the solution is regular, where $r:=\sqrt{x_1^2+x_2^2}$. Another, which is in some ways similar to the results of this paper, is due to Chen, Fang, and Zhang \cite{chen}: if $u$ is axisymmetric, $r^{1-\frac3q}u^\theta\in L_t^\infty L_x^q$, and $\|r^{1-\frac3q}u^\theta\|_{L_t^\infty L_x^q(r\leq\alpha)}$ is sufficiently small for some $q\in[3,\infty)$, then the solution is regular. Here $u^\theta$ is the swirl component of the velocity, ie., the projection onto $(-x_2,x_1)^t$. The two main techniques in use so far to prove theorems like these are a Liouville theorem-based approach as in, for example, \cite{knss}, and De Giorgi-Nash-Moser methods as in, for example, \cite{csyt}. Here we show that similar results can be obtained using the quantitative arguments from \cite{tao}, even while assuming less or no symmetry for the solutions. We will be concerned with solutions of \eqref{NS} satisfying the critical bound
\eq{\label{critical}
\|r^{1-\frac3q}u\|_{L_t^\infty L_x^q}\leq A
}
where $u$ and $q$ fall into one of two cases:
\eq{\label{cases}
\text{either }q\in(3,\infty)\text{, or }u\text{ is axisymmetric and }q\in(2,3].
}
Without loss of generality, let us take $A\geq2$. Then we have the following theorems, which mirror those in \cite{tao} but offer improvements of the quantitative bounds.
\begin{theorem}\label{regtheorem}
If $u:[0,T]\times\Rt\to\Rt$ is a classical solution of \eqref{NS} satisfying \eqref{critical} and \eqref{cases}, then it satisfies the bounds
\eqn{
|\grad^j_xu(t,x)|\leq\exp\exp(A^{O(1)})t^{-\frac{j+1}2}
}
and
\eqn{
|\grad^j_x\omega(t,x)|\leq\exp\exp(A^{O(1)})t^{-\frac{j+2}2}
}
for $t\in(0,T]$, $x\in\Rt$, and $j=0,1$.
\end{theorem}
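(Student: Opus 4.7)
My plan is to adapt the quantitative contradiction scheme of Tao to the weighted critical spaces in \eqref{critical}, with the main new ingredient being a Carleman inequality on cylindrical regions that is better suited to the axial structure of the weight $r^{1-3/q}$ than the ball-based backward uniqueness inequality used in \cite{ess} and \cite{tao}. Arguing by contradiction, I would suppose that at some $(t_0,x_0)$ the stated bound fails with threshold $N=\exp\exp(A^C)$ for a large $C$ to be determined, and rescale so that $t_0=1$. The target is then to produce roughly $\log N$ disjoint spacetime regions each carrying a fixed positive contribution to $\|r^{1-3/q}u\|_{L^q}$, so that summing over these regions eventually violates \eqref{critical}.

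The first step is to extract a family of local subcritical a priori estimates from \eqref{critical} alone, which is not immediate because the weighted $L^q$ control does not directly deliver $L^2$ energy bounds. Here I would use the decomposition $u=\ulin+\unlin$ advertised in the abstract: a piece $\ulin$ absorbing the portion of the data that obstructs energy methods from closing, together with a remainder $\unlin$ satisfying a forced Navier--Stokes-type equation amenable to localized energy estimates on cylindrical slabs. Combined with a quantitative epsilon-regularity criterion in the spirit of enstrophy localization, and fed the assumed pointwise failure at $(1,x_0)$, this should produce a concentration lemma: $\|r^{1-3/q}u\|_{L^q}$ is $\gtrsim A^{-O(1)}$ on some short time interval in each of $\sim\log N$ dyadically nested cylinders around the axial segment containing $x_0$.

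Next I would make these regions genuinely disjoint in spacetime by propagating using unique continuation forward in time and backward uniqueness backward in time for the vorticity equation. Epsilon-regularity selects a clean time slice in each cylindrical shell on which $\omega$ is smooth across the shell; the cylindrical Carleman inequality then propagates a lower bound on $\omega$ across this shell, yielding a fresh contribution to $\|r^{1-3/q}u\|_{L^q}$ in a well-separated slab. Because the shell is an axial annulus rather than a Euclidean ball, no additional pigeonhole is required to locate a clean ball inside it, and this is precisely the step that costs one exponential in Tao's $L^3$ argument. Summing the resulting contributions over the $\sim\log N$ available scales contradicts \eqref{critical} as soon as $N\gtrsim\exp\exp(A^{O(1)})$, which yields the theorem after undoing the rescaling.

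The main obstacle I expect is in formulating and proving the cylindrical Carleman estimate in a form usable on these axial annuli: the axis $\{r=0\}$ is a coordinate singularity that the heat operator feels, so the Carleman weight has to be tailored to decay suitably as $r\downarrow 0$ while still giving the required exponential gain in the radial direction to drive the unique-continuation and backward uniqueness conclusions. A secondary difficulty is the axisymmetric regime $q\in(2,3]$, where the weight $r^{1-3/q}$ becomes less integrable near the axis and the decomposition $u=\ulin+\unlin$ and accompanying local energy estimates must be chosen compatibly with the axisymmetric structure so that both the concentration lemma and the epsilon-regularity step continue to close.
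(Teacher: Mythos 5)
Your proposal captures the skeleton of the paper's argument --- a quantitative contradiction scheme driven by the decomposition $u=\ulin+\unlin$, a cylindrical backward uniqueness Carleman inequality, and a final count of roughly $\log(TN_0^2)$ disjoint contributions to $\|r^{1-3/q}u\|_{L^q}$ --- but several of the mechanisms you describe are misattributed or missing. The spare exponential is not saved because the Carleman region is an axial annulus rather than a Euclidean ball; it is saved because the critical condition \eqref{critical} is in effect \emph{subcritical} at radial distances $\gtrsim A^{O(1)}T^{1/2}$ from the axis, which is exactly what Proposition \ref{regularityprop} extracts: one gets direct smallness of $u$ and $\omega$ in $\{r\geq R\}$ with no pigeonhole over scales to locate a shell of small enstrophy. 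The cylindrical geometry of Proposition \ref{backwardunique} is forced on us because the region of automatic regularity is a cylindrical shell, not because the geometry itself eliminates a pigeonhole. Relatedly, your worry about tailoring the Carleman weight as $r\downarrow 0$ is a red herring: the backward uniqueness region is $\{r_-\leq r\leq r_+,\ |z|\leq r_+\}$ with $r_-^2\geq 4C_0T$, always away from the axis, and the genuine work in that proposition is verifying convexity of $g$ and positivity of $LF$ in the cylindrical variables and controlling the boundary contributions along the truncating faces at $r=r_\pm$ and $|z|=r_+$.

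The more substantive omission is back propagation, Propositions \ref{backpropagationprop}--\ref{iteratedbp}. You posit ``$\sim\log N$ dyadically nested cylinders'' each carrying a fixed contribution, but you give no mechanism for producing them. In the paper, a single Fourier concentration $|P_{N_0}u(t_0,x_0)|\geq A_1^{-1}N_0$ is transported backward in time along a dyadic ladder of times $t_0-T_1$ with $T_1\in[A_4N_0^{-2},A_4^{-1}T]$, arriving at frequency $\sim T_1^{-1/2}$ at each stage; these are converted to vorticity lower bounds and spread out to radius $\sim T_1^{1/2}$ by the unique continuation Carleman inequality inside an epoch of regularity (Proposition \ref{epochs}), then propagated forward to $t_0$ by the backward uniqueness Carleman inequality inside the region of regularity far from the axis. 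That is what places concentrations at distinct $T_1$ in disjoint shells so the critical norm sums. A final point: converting the conclusion of Proposition \ref{main} (no high-frequency Fourier concentration at scale $\geq N_*$) into the stated pointwise bounds is not merely summing; the proof of Theorem \ref{regtheorem} runs an enstrophy estimate on $\wnlin_n=\curl\unlin_n$, using the $\ulin_n/\unlin_n$ split once more together with \eqref{great} to close a Gr\"onwall argument.
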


\begin{theorem}\label{blowuptheorem}
Let $u:[0,T_*)\times\Rt\to\Rt$ be a classical solution to \eqref{NS} which blows up at time $t=T_*$. If $u$ and $q$ satisfy \eqref{cases}, then
\eqn{
\limsup_{t\uparrow T_*}\frac{\|r^{1-\frac3q}u(t)\|_{L_x^q(\mathbb R^3)}}{(\log\log\frac1{T_*-t})^c}=+\infty
}
for a constant $c>0$ depending only on $q$.
\end{theorem}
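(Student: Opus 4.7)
The plan is to deduce Theorem~\ref{blowuptheorem} from the quantitative bound in Theorem~\ref{regtheorem} by a Beale-Kato-Majda style continuation argument. Suppose for contradiction that the limsup is finite; then there exist $K>0$ and $t_1<T_*$ with
\[
\|r^{1-3/q}u(t)\|_{L_x^q(\Rt)}\leq K\bigl(\log\log(T_*-t)^{-1}\bigr)^c
\]
for all $t\in[t_1,T_*)$. Let $C=C(q)$ denote the implicit constant in the exponent $A^{O(1)}$ of Theorem~\ref{regtheorem}; the calculation below will force $Cc<1$, pinning down the admissible range and showing that the exponent $c$ in Theorem~\ref{blowuptheorem} depends only on $q$.

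Fix $t_0\in[t_1,T_*)$, chosen close enough to $T_*$ that $\log\log(T_*-t)^{-1}$ is positive on $[t_0,T_*)$, and set $\delta:=(T_*-t_0)/2$. For each $\tau\in[t_0+\delta,T_*)$ I would apply Theorem~\ref{regtheorem} to the time-shifted classical solution $(s,x)\mapsto u(t_0+s,x)$ on $[0,\tau-t_0]\times\Rt$. By monotonicity of $s\mapsto\log\log(T_*-s)^{-1}$, the critical norm on this sub-interval is bounded by $A(\tau):=K\bigl(\log\log(T_*-\tau)^{-1}\bigr)^c$, and reading off the vorticity estimate at the endpoint $s=\tau-t_0\geq\delta$ produces
\[
\|\omega(\tau)\|_{L_x^\infty(\Rt)}\leq\delta^{-1}\exp\exp\bigl(A(\tau)^C\bigr).
\]

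The key calculation is to verify that $\int_{t_0+\delta}^{T_*}\exp\exp(A(\tau)^C)\,d\tau<\infty$. Under the substitution $y=\log(T_*-\tau)^{-1}$ the integrand becomes $e^{-y}\exp\exp\bigl(K^C(\log y)^{Cc}\bigr)$. Since $Cc<1$, for any $\epsilon>0$ we have $K^C(\log y)^{Cc}\leq\epsilon\log y$ for large $y$, hence $\exp\exp(K^C(\log y)^{Cc})\leq\exp(y^\epsilon)$; choosing $\epsilon<1$ makes $e^{-y}\exp(y^\epsilon)$ decay superpolynomially and the integral converges. Combined with the classical smoothness of $\omega$ on $[t_0,t_0+\delta]$, this gives $\int_0^{T_*}\|\omega(t)\|_{L_x^\infty}\,dt<\infty$, so the Beale-Kato-Majda criterion extends $u$ as a classical solution beyond $T_*$, contradicting the assumption that $T_*$ is a blowup time. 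No step here poses a serious obstacle beyond what is already done in Theorem~\ref{regtheorem}; the present argument is essentially a log-log accounting of the double-exponential dependence on $A$, and taking $c=1/(2C(q))$ suffices.
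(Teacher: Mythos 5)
Your proof is correct and follows precisely the route the paper sketches: it deduces the blowup rate from Theorem~\ref{regtheorem} via a classical continuation criterion (here BKM), with the log-log cancellation computation showing $\int\exp\exp(A(\tau)^C)\,d\tau<\infty$ when $Cc<1$. The paper only states ``Theorem~\ref{blowuptheorem} follows immediately from Theorem~\ref{regtheorem} combined with essentially any classical blowup criterion,'' and your write-up is a correct implementation of that one-line argument.
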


Let us emphasize that in the case $q\in(3,\infty)$, we do not assume any symmetry on $u$. We also wish to stress that here $r$ is not the distance to the origin, but the distance to the $x_3$-axis. When $q>3$, it should be possible to extend these arguments even to the case where $r$ is replaced with, say, $|x_1|$. However we choose instead to work in the axial setting in order to make the results comparable to other regularity theorems in the literature.

\begin{remark}\label{remark}
The condition that $u$ is axisymmetric can in fact be weakened to just that its magnitude is comparable to an axisymmetric function. In other words, it suffices that there exist $f:\mathbb R\times\Rt\to[0,\infty)$ and $C>0$ such that $f$ is axisymmetric and $C^{-1}f\leq|u|\leq Cf$. Indeed, we will only invoke the axial symmetry assumption by way of Propositions \ref{bernsteinprop} and \ref{localbernsteinprop}. One readily sees by inspecting the proofs that these hold just as well under the weaker hypothesis on $u$. Thus in the arguments that follow we will avoid using any special structures of the \emph{exactly} axisymmetric three-dimensional Navier-Stokes equations.

\end{remark}

One noteworthy special case of Theorems \ref{regtheorem} and \ref{blowuptheorem} is when $q=3$, which is the famous Prodi-Serrin-Ladyzhenskaya endpoint $L_t^\infty L_x^3$. By assuming additionally that $u$ is axisymmetric, we obtain the same result as \cite{tao} but with one fewer $\exp$ or $\log$ in the estimates. Also notable is that when $q$ gets large, we approach the the well-known criterion from \cite{knss} and \cite{csyt} cited above, but without needing to assume any kind of symmetry on $u$. Unfortunately, it seems unlikely that this result can be extended all the way to $q=\infty$ using these techniques. Not only do many of the estimates in this paper degenerate as $q\to\infty$, but $L^\infty$-based critical spaces seem to be out of reach of these quantitative methods since the argument relies on locating concentrations in many different spacial regions which then contribute additively to the critical norm. (See Proposition \ref{main}.)

On the other hand, it seems likely that the $q=2$ case is achievable, although we expect Proposition \ref{regularityprop} to fail at this endpoint and pigeonholing would again be necessary to apply the Carleman estimates. Thus one may have to settle for triple exponential and logarithmic bounds. We can justify this as follows. Although all the conditions defined by \eqref{critical} and \eqref{cases} are critical with respect to the Navier-Stokes scaling, we claim that when $q=3$ or $q=2$ with $u$ axisymmetric, the criticality is homogeneous in the sense that the norms measure all concentrations of the solution identically everywhere in space; on the other hand, if, say, $r^{1-\frac3q}u\in L_t^\infty L_x^q$ where $q>3$ or $u$ is axisymmetric and $q>2$, the norm becomes subcritical far from the $x_3$-axis and supercritical near it. (The opposite would be true if $q<3$ or $q<2$ respectively.) This explains why we can handle these cases without gaining a third $\exp$ or $\log$; indeed, by working sufficiently far from the axis, we can guarantee that the velocity and its derivatives are suitably small compared to the scale of the spacetime region. One can see this phenomenon concretely by considering a concentration of $P_Nu$ at an $x_0\in\Rt$ which lies a distance $r_0$ from the axis. (Refer to Section \ref{notation} for the definition of Littlewood-Paley projections.) Using the same heuristic for \eqref{NS} from \cite[p.\ 67]{othertao}, $u$ behaves essentially as a solution to the heat equation unless the advection term in \eqref{NS} dominates the viscosity, which happens when $|P_Nu(x_0)|\gg N$. By the uncertainty principle, such a concentration must occupy a length scale of at least $N^{-1}$. In the case that $N\gg r_0^{-1}$, the ball $B(x_0,N^{-1})$ does not intersect the $x_3$-axis and therefore, roughly speaking, it contributes at least $(r_0N)^{1-\frac3q}$ to the critial norm $\|r^{1-\frac3q}u\|_{L_t^\infty L_x^q}$. Thus by assuming \eqref{critical} with $q>3$, we expect to be able to rule out nonlinear effects with amplitude much larger than $r_0^{-1}$. If we assume axial symmetry (or more weakly that $|P_Nu|$ is comparable to an axisymmetric function, cf.\ Remark \ref{remark}) and $N\gg r_0^{-1}$, then this concentration exists not just in $B(x_0,N^{-1})$ but also in the torus obtained by rotating this ball around the $x_3$-axis; thus the contribution to $\|r^{1-\frac3q}u\|_{L_t^\infty L_x^q}$ can be strengthened to $(r_0N)^{1-\frac2q}$, and we only need $q>2$ to reach the same conclusion. These heuristics are formalized in the proofs of Propositions \ref{bernsteinprop} and \ref{regularityprop}.

In order to work in these weighted spaces, we employ a useful decomposition $u=\ulin+\unlin$ such that $\ulin$ and $\unlin$ have the following properties:
\begin{itemize}
\item $\ulin$ is almost as regular as a solution to the heat equation; in particular all of its derivatives have good bounds in spaces with sufficiently high integrability. \item $\unlin$ is well-controlled in spaces with integrability between $1$ and $3$.
\item $\unlin$ obeys a forced Navier-Stokes equation that permits energy and enstrophy-type estimates.
\end{itemize}
(See Proposition \ref{hierarchy} for precise statements.) It should be thought of as a somewhat regularized version of $u$ that is close to $u$ near spatial infinity but does not fully capture its largest concentrations, ie., it is the ``flat'' part. On the other hand, $\unlin$ captures these ``sharp'' concentrations and decays more quickly at infinity. This decomposition will be essential in order to apply energy estimates, for example in Propositions \ref{epochs} and \ref{tbsprop} where we need to control $u$ in unweighted $L_x^2$. Similar decompositions, which take advantage of the ``self-improving'' property of the bilinearity in Duhamel's formula, have appeared previously, particularly in the setting of negative regularity Besov spaces; see \cite{cp}, \cite{gkp2}, and \cite{ab}. There is also work of Maremonti and Shimizu \cite{ms} involving a decomposition into the solution of a linear problem and a remainder to which one can apply $L^2$ theory.\footnote{We thank the anonymous referee for these references.}

Once we have the decomposition, the argument can proceed with a strategy analogous to \cite{tao}. The rest of Section \ref{estimates} is concerned with proving that properties of $L_t^\infty L_x^3$ solutions of \eqref{NS} including epochs of regularity (Proposition \ref{epochs}), the bounded total speed property (Proposition \ref{tbsprop}), and back propagation of bubbles of concentration (Proposition \ref{backpropagationprop}) extend to solutions satisfying the conditions \eqref{critical} and  \eqref{cases}. We also show that far from the $x_3$-axis, the solution is regular enough to use a Carleman inequality to propagate concentration forward in time (Proposition \ref{regularityprop}). In Section \ref{carleman}, we prove a backward uniqueness-type Carleman inequality with geometry suited for use with Proposition \ref{regularityprop}. The main difference is that we need to work in cylindrical regions where $r$ and $z$ are localized, rather than annular regions as in \cite{tao}; thus the Carleman inequality in the complement of a ball that was a key ingredient in \cite{ess} and \cite{tao} is not well-suited for our setting. In Section \ref{theorems}, we use a scheme similar to \cite{tao} to prove Theorems \ref{regtheorem} and \ref{blowuptheorem}.

\section{Preliminaries}

\subsection{Notation}\label{notation}

Most of the estimates to follow depend on the choice of $q$, and many of them will deteriorate as either $q\downarrow2$, $q\downarrow3$, or $q\to\infty$. Rather than note this in every instance, we use asymptotic notation $X\lesssim Y$ or $X=O(Y)$ to mean that there is a constant $C(q)$ depending only on $q$ such that $|X|\leq C(q)Y$. One should think of $q$ to be fixed at the beginning of the proof so this does not pose any problems. As in \cite{tao}, we fix a large constant $C_0$ that may depend on $q$. With $A$ as in \eqref{critical}, we define the hierarchy of large constants $A_j=A^{C_0^j}$.

We will occasionally write $x\leq y-$ or $x+\leq y$ to mean $x<y$. This will make it possible to abbreviate a collection of strict and non-strict inequalities. For example, $x\leq a$, $x\leq b$, $x<c$ can be written as $x\leq\min(a,b,c-)$.

If $I\subset\mathbb R$ is a time interval, we use $|I|$ to denote its length. If $\Omega\subset\mathbb R^3$, $|\Omega|$ will denote its three-dimensional Lebesgue measure. If $x_0\in\mathbb R^3$ and $R>0$, we will write $B(x_0,R)$ to denote the closed ball $\{x\in\Rt:|x-x_0|\leq R\}$. As noted in the introduction, if $x\in\Rt$, then $r$ denotes the axial distance in cylindrical coordinates, that is $r:=\sqrt{x_1^2+x_2^2}$. For a specific point, say $p\in\Rt$, we will write its axial coordinate as $r(p):=\sqrt{p_1^2+p_2^2}$. For $0<r_1<r_2$, we define the cylindrical shell $\shell{r_1}{r_2}:=\{x\in\Rt:r_1\leq r\leq r_2\}$ along with the truncated versions $\shelltrunc{r_1}{r_2}M:=\{x\in\shell{r_1}{r_2}:|x_3|\leq M\}$ and $\shelltrunc{r_1}{r_2}{M_1,M_2}:=\{x\in\shell{r_1}{r_2}:M_1\leq|x_3|\leq M_2\}$.

We say a scalar-valued function is axisymmetric if its derivative in the spatial direction $(-x_2,x_1)^t$ vanishes identically. We say a vector-valued function is axisymmetric if each component is axisymmetric when the function is written in cylindrical coordinates around the $x_3$-axis.

When studying the nonlinearity of \eqref{NS}, we will use the symmetrized tensor product
\eqn{
u\odot v:=\frac12(u\otimes v+v\otimes u)
}
for $u,v\in\Rt$, or in coordinates, $(u\odot v)_{ij}=\frac12(u_iv_j+u_jv_i)$. This allows the convenient binomial expansion $(u+v)\otimes(u+v)=u\otimes u+2u\odot v+v\otimes v$.

For $\Omega\subset\mathbb R^n$ and $I\subset\mathbb R$, we will use the Lebesgue norms
\eqn{
\|f\|_{L_x^q(\Omega)}:=\left(\int_\Omega|f(x)|^qdx\right)^{1/q}
}
and
\eqn{
\|f\|_{L_t^pL_x^q(I\times\Omega)}:=\left(\int_I\|f(t,\cdot)\|_{L_x^q(\Omega)}^pdt\right)^{1/p}
}
with the usual modifications if $p=\infty$ or $q=\infty$.

If $\Omega\subset\Rt$ and $\alpha\in\mathbb R$, we define the weighted space $\X q\alpha T(\Omega)$ of smooth vector fields $u:[-T,0]\times\Omega\to\Rt$ such that
\eqn{
\|u\|_{\X q\alpha T(\Omega)}:=\|r^\alpha u\|_{L_t^\infty L_x^q([-T,0]\times\Omega)}<\infty.
}
For brevity we will set $\X q\alpha T:=\X q\alpha T(\Rt)$. Note that for notational convenience, we will work mostly with negative time. The spaces become critical with respect to the Navier-Stokes scaling when $\alpha=\alpha_q$, where
\eqn{
\alpha_q:=1-\frac3q.
}
We record H\"older's inequality for $\X q\alpha T$ spaces, which is immediate from the standard version for $L^p$ spaces: assuming $1\leq p,q,r\leq\infty$, $\alpha,\beta,\gamma\in\mathbb R$, $\frac1p=\frac1q+\frac1r$, and $\alpha=\beta+\gamma$,
\eqn{
\|r^\alpha uv\|_{\X p\alpha T}\leq\|r^\beta u\|_{\X q\beta T}\|r^\gamma v\|_{\X r\gamma T}.
}

For a Schwartz function $f:\Rt\to\mathbb R^n$, we define the Fourier transform
\eqn{
\hat f(\xi)=\int_{\Rt}e^{-ix\cdot\xi}f(x)dx
}
and the Littlewood-Paley projection by the formula
\eqn{
\widehat{P_{\leq N}f}(\xi):=\varphi(\xi/N)\hat f(\xi)
}
where $\varphi:\mathbb R^3\to\mathbb R$ is a radial bump function supported in $B(0,1)$ such that $\varphi\equiv1$ in $B(0,1/2)$. Then let
\eqn{
P_N&:=P_{\leq N}-P_{\leq N/2},\\
P_{>N}&:=1-P_{\leq N},\\
\tilde P_N&:=P_{\leq2N}-P_{\leq N/4}.
}
These all commute with other Fourier multipliers such as $\mathbb P$, $\Delta$ and $e^{t\Delta}$. Estimation of such operators in the weighted spaces $\X p\alpha T$ is the subject of the next subsection.

When a summation is indexed with a capital letter such as $N$ or $R$, it should be taken to range over the dyadic integers $2^\mathbb Z$. Thus we have the shorthand notation
\eqn{
\sum_Nf(N):=\sum_{N\in2^\mathbb Z}f(N),\quad\sum_{A\leq N\leq B}f(N):=\sum_{\{N\in 2^\mathbb Z:A\leq N\leq B\}}f(N),
}
for example.

\subsection{Bernstein-type inequalities}

The following proposition shows that Bernstein's inequality for Fourier multipliers with compactly supported symbols extends naturally to weighted $L^p$ spaces such as $\X p\alpha T$. When working with $u$ controlled in an $\X q{\alpha_q}T$ space with $q<3$ one runs into the difficulty that $\alpha_q<0$. Proposition \ref{bernsteinprop}, as well as many of the estimates for other operators we will derive from it, only hold when the weight on the left-hand side has a smaller power than the one on the right (see Remark \ref{counterexamples}), so it is not clear how one would control the components of $u$ with frequency much larger than $r^{-1}$. Fortunately, in the presence of axial symmetry we can avoid these issues and prove a weighted Bernstein inequality which allows us to exchange some integrability for negative powers of $r$.

\begin{prop}\label{bernsteinprop}
Let $m$ be a Fourier multiplier supported in $B(0,N)$ with $|\grad^jm|\leq MN^{-j}$ for $j=0,1,\ldots,100$. If $1\leq q\leq p\leq\infty$ and either
\begin{enumerate}
\item $\alpha>-\frac2p$, $\beta<\frac2{q'}$, and $\alpha\leq\beta$;
\item $p=\infty$, $\alpha=0$, and $0\leq\beta<\frac2{q'}$; or
\item $q=1$, $\beta=0$, and $-\frac2p<\alpha\leq0$,
\end{enumerate}
then we have
\eq{\label{bernstein}
\|r^\alpha T_mu\|_{L^p}&\lesssim MN^{\frac3q-\frac3p+\beta-\alpha}\|r^\beta u\|_{L^q}.
}
If $|u|$ is axisymmetric, then the conditions $\alpha\leq\beta$, $\beta\geq0$, and $\alpha\leq0$ can be improved to $\alpha\leq\beta+\frac1q-\frac1p$, $\beta\geq-\frac1q$, or $\alpha\leq1-\frac1p$, respectively.
\end{prop}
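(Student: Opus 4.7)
My plan is to start by reducing the statement to a pointwise kernel bound. Since $m$ is supported in $B(0,N)$ and satisfies $|\nabla^j m|\leq MN^{-j}$ for $j\leq 100$, repeated integration by parts yields $|K(y)|\lesssim_L MN^3\langle Ny\rangle^{-L}$ for any $L\leq 100$, so $|T_m u|\leq |K|*|u|$ pointwise. A rescaling $x\mapsto x/N$ changes $\|r^\alpha\cdot\|_{L^p}$ and $\|r^\beta\cdot\|_{L^q}$ by factors $N^{-\alpha-3/p}$ and $N^{-\beta-3/q}$ respectively, so the claimed factor $N^{3/q-3/p+\beta-\alpha}$ falls out and it suffices to prove $\|r^\alpha K*u\|_{L^p}\lesssim\|r^\beta u\|_{L^q}$ with $M=N=1$ and a rapidly decaying $K$.

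I would then decompose space into dyadic cylindrical shells $A_k=\{2^{k-1}<r\leq 2^k\}$ and write $u=\sum_k u\mathbf{1}_{A_k}$, bounding the contribution on each output shell $A_j$ separately. On the diagonal $j=k\geq 0$ the weights $r^\alpha$ and $r^\beta$ are essentially constant $\sim 2^{k\alpha}$, $2^{k\beta}$, and the unweighted Young's inequality against the integrable kernel gives the bound on each shell with an overall factor $2^{k(\alpha-\beta)}$; the assumption $\alpha\leq\beta$ makes these sum geometrically in $k\geq 0$. For off-diagonal pairs $j\neq k$ with both non-negative, the rapid decay $\langle 2^{\max(j,k)}\rangle^{-L}$ of $K$ dominates, and Schur's test on the resulting matrix of shell-to-shell contributions absorbs any polynomial weight imbalance.

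The subtle part is the near-axis region $k\leq 0$. Here the effective support of $K*u_k$ is the full unit cylinder around the axis rather than the thin shell $A_k$, and one must directly integrate $\int_{\{r\lesssim 1\}}r^{\alpha p}\,dx$, which converges exactly under $\alpha>-2/p$; the symmetric condition $\beta<2/q'$ arises because we must recover $u=r^{-\beta}(r^\beta u)$ in the dual pairing. The endpoint cases (2) and (3) correspond to $\alpha=0$ or $\beta=0$ exactly when one of these integrations is borderline, and here a trivial $L^\infty$ or $L^1$ pairing on the unit cylinder replaces the genuine weighted argument.

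For the axisymmetric improvement the key observation is volumetric: a concentration of $|u|$ in a single ball of radius $1$ at axial distance $R$ from the $x_3$-axis is, by axisymmetry of $|u|$, repeated around the whole circle of radius $R$, so the relevant support has volume $\sim R$ rather than $\sim 1$. On each shell $A_k$ with $k\geq 0$ this changes the ratio $\|K*u_k\|_{L^p(A_k)}/\|u_k\|_{L^q(A_k)}$ from $1$ to $\sim 2^{k(1/p-1/q)}$, providing a gain of $2^{k(1/q-1/p)}$ that relaxes $\alpha\leq\beta$ to $\alpha\leq\beta+1/q-1/p$; analogous volume considerations near the axis produce the other two improvements. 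The main obstacle I expect is the case-by-case bookkeeping in the region $k\leq 0$, where the convolution mixes many shells and the three borderline conditions are saturated, together with making the axisymmetric volume heuristic rigorous via the effective two-dimensional kernel $\tilde K(r,r',z-z')=\int_0^{2\pi}K(r-r'\cos\phi,\,-r'\sin\phi,\,z-z')\,d\phi$ obtained by integrating out the angular variable.
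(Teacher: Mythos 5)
Your reduction to a kernel bound, rescaling, and dyadic cylindrical decomposition mirror the paper's skeleton for the non-axisymmetric part, and your reading of where $\alpha>-\frac2p$, $\beta<\frac2{q'}$, and $\alpha\leq\beta$ come from is essentially right. Where you genuinely diverge is in the diagonal far-from-axis terms under axisymmetry of $|u|$. The paper proceeds by locating a concentration point $x_0$ of $T_mv$ in a good sub-shell (chosen so that the sup does not decay sharply under shrinking), establishing a gradient bound there via a localized Bernstein inequality, deducing that $|T_mv|$ stays comparable on a unit-scale ball, and then --- the crucial move --- passing to the operator $\tilde T_m$ with kernel $|K|$, observing that $\tilde T_m(|v|)$ is axisymmetric and dominates $|T_mv|$ pointwise, so that the pointwise concentration extends to the entire torus and yields an $L^q$ lower bound with the extra factor $R^{1/q}$. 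Your angular-averaged-kernel route is a genuinely different and arguably more elementary path to the same gain: dominate $|T_mu|\leq |K|*|u|$ (which, unlike $T_mu$, \emph{is} axisymmetric precisely because $|K|$ is radial and $|u|$ is axisymmetric), reduce to a 2D integral operator on the $(r,z)$ cross-section with kernel $\tilde K(r_0,r,\zeta)=r\int_0^{2\pi}|K|(r_0-r\cos\phi,-r\sin\phi,\zeta)\,d\phi$, and observe that for $r,r_0\sim R\gg1$ the angular integral contributes a factor $\sim R^{-1}$ that cancels the Jacobian, leaving a fast-decaying 2D kernel with $R$-uniform constants; unweighted Young then gives $\|K*u_k\|_{L^p(A_k)}\lesssim 2^{k(1/p-1/q)}\|u_k\|_{L^q(A_k)}$, as you claim. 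Note, however, that your displayed formula for $\tilde K$ uses $K$ rather than $|K|$: this matters, since $u$ itself is not axisymmetric (only $|u|$ is), so $T_mu$ need not be axisymmetric and one cannot push the angular average through $K$ directly --- the triangle inequality $|K*u|\leq|K|*|u|$ is the step that restores axisymmetry and must come first. The paper's concentration-point method, by contrast, is built to slot into the machinery of Propositions \ref{localbernsteinprop}, \ref{backpropagationprop}, and \ref{regularityprop}, where spatially localized sup-norm information is what is needed downstream; your route is cleaner as a standalone inequality but yields no pointwise information.

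Two further points you are glossing over. First, the equality cases $\alpha=\beta$ (resp.\ $\alpha=\beta+\frac1q-\frac1p$) make the diagonal shell sum fail to be geometric: each shell contributes an $O(1)$ operator norm, and one must instead invoke the embedding $\ell^q(2^{\mathbb Z})\hookrightarrow\ell^p(2^{\mathbb Z})$ (available since $p\geq q$) together with the bounded-overlap fact $\sum_R\|\tilde\chi_Ru\|_{L^q}^q\lesssim\|u\|_{L^q}^q$, as the paper does. Second, you should be explicit that the near-axis diagonal $R\sim S\lesssim 1$ contribution converges automatically once $\alpha+\frac2p>0$ and $-\beta+\frac2{q'}>0$ are imposed, since the exponent there is $\alpha-\beta+\frac2p+\frac2{q'}$; your ``dual pairing'' phrasing for where $\beta<\frac2{q'}$ arises is correct in spirit but vague. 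Finally, the paper also has to dispose of the technical point that $m$ need not itself be axisymmetric; in their argument this is handled by composing with $P_{\leq 10}$, whereas on your route the issue simply does not arise, since $|K|$ is radial regardless of $m$ --- a minor simplification worth noting.
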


\begin{proof}
In this proof we make use of the standard non-weighted Bernstein inequalities proved, for example, in \cite[Lemma 2.1]{tao}.

When establishing the case of the proposition in which $|u|$ is axisymmetric, let us assume for the moment that the symbol $m$ is likewise axisymmetric.

We begin by rescaling $x$ and $m$ to make $N=M=1$. Then it clearly suffices to show that the operator $T=r^\alpha T_mr^{-\beta}$ is bounded from $L^q(\Rt)$ to $L^p(\Rt)$. To do so, we decompose it into spatially localized pieces as
\eqn{
T=\sum_{R,S}T_{R,S},\quad T_{R,S}=r^\alpha\chi_RT_mr^{-\beta}\chi_S
}
where $\chi:\mathbb R\to[0,\infty)$ is a smooth function such that the collection $\chi_R(x)=\chi(r/R)$ over $R\in2^\mathbb Z$ forms a partition of unity of $\mathbb R\setminus\{0\}$. More specifically we may choose $\chi_R$ to be supported in $\shell{\frac R2}{\frac{3R}2}$. Then $T_{R,S}$ can be expressed as an integral operator $T_{R,S}f(y)=\int_{\Rt}f(x)K(x,y)dx$ with the kernel
\eqn{
K_{R,S}(x,y)=r_y^\alpha r_x^{-\beta}\chi_R(y)\chi_S(x)\hat m(x-y)
}
satisfying
\eqn{
|K_{R,S}(x,y)|&\lesssim R^\alpha S^{-\beta}\chi_R(y)\chi_S(x)\langle x-y\rangle^{-50}
}
where we let $\langle x\rangle=(1+|x|^2)^{1/2}$. Then, bounding the operator with H\"older's inequality, we have that for $R,S$ such that $\max(R/S,S/R)\geq100$,
\eqn{
\|T_{R,S}\|_{L^q\to L^\infty}&\lesssim\|K_{R,S}\|_{L_y^\infty L_x^{q'}}\\
&\lesssim R^\alpha S^{-\beta}\langle\max(R,S)\rangle^{-50}\|\chi_S(x)\langle x_3-y_3\rangle^{-50}\|_{L_y^\infty L_x^{q'}}\\
&\lesssim R^\alpha S^{-\beta+\frac2{q'}}\langle\max(R,S)\rangle^{-50}
}
and
\eqn{
\|T_{R,S}\|_{L^1\to L^p}&\lesssim\|K_{R,S}\|_{L_x^\infty L_y^p}\\
&\lesssim R^\alpha S^{-\beta}\langle\max(R,S)\rangle^{-50}\|\chi_R(y)\langle x_3-y_3\rangle^{-50}\|_{L_x^\infty L_y^p}\\
&\lesssim R^{\alpha+\frac2p}S^{-\beta}\langle\max(R,S)\rangle^{-50}.
}
Then by interpolation, if $p\geq q$, it follows that
\eqn{
\|T_{R,S}\|_{L^q\to L^p}&\lesssim R^{\alpha+\frac2p}S^{-\beta+\frac2{q'}}\langle\max(R,S)\rangle^{-50}.
}
By essentially the same calculation, if $1/100\leq R/S\leq100$, then
\eqn{
\|T_{R,S}\|_{L^q\to L^p}&\lesssim R^{\alpha-\beta+\frac2p+\frac2{q'}}.
}
Unfortunately this estimate is adequate only when $R,S\lesssim1$,
so we separately consider the case where $R$ and $S$ are comparable and $R,S\gg1$. Fix a $\rho\sim R^{1/10}$ that evenly divides $R/4$. Let $v=r^{-\beta}\chi_Su$. We need to find a spatial region that we can dilate slightly (as required to use the localized Bernstein inequality, again see \cite[Lemma 2.1]{tao}) without drastically increasing the $L^\infty$ norm of $T_mv$. Suppose first we do not have such a region, that is
\eqn{
\|T_mv\|_{L^\infty(\shell{\frac R4}{\frac{7R}4})}\geq2\|T_mv\|_{L^\infty(\shell{\frac R4+\rho}{\frac{7R}4-\rho})}\geq\cdots\geq 2^{\frac R{4\rho}}\|T_mv\|_{L^\infty(\shell{\frac R2}{\frac{3R}2})}.
}
Then taking the left- and right-most ends of the inequality, the ordinary Bernstein inequality implies
\eqn{
\|T_mv\|_{L^\infty(\shell{\frac R2}{\frac{3R}2})}\leq2^{-R^{1/2}}\|T_mv\|_{L^\infty}\lesssim R^{-100}\|v\|_{L^q}.
}
It follows that
\eqn{
\|T_{R,S}u\|_{L^\infty}\lesssim R^{-50}\|u\|_{L^q}
}
which is an adequate estimate to proceed with the argument. Otherwise, there exists an $R_0\in[\frac R2,\frac{3R}4]$ such that
\eqn{
\|T_mv\|_{L^\infty(\shell{R-R_0-\rho}{R+R_0+\rho})}\leq\frac12\|T_mv\|_{L^\infty(\shell{R-R_0}{R+R_0})}.
}
Let $x_0$ be a point in the region $\shell{R-R_0}{R+R_0}$ such that
\eqn{
|T_mv(x_0)|\geq\frac12\|T_mv\|_{L^\infty(\shell{R-R_0}{R+R_0})}.
}
By composing with $P_{\leq10}$ and applying the local Bernstein inequality from \cite{tao}, we have the gradient estimate
\eqn{
\|\grad T_mv\|_{L^\infty(\shell{R-R_0-\frac\rho2}{R+R_0+\frac\rho2})}&\lesssim\|T_mv\|_{L^\infty(\shell{R-R_0-\rho}{R+R_0+\rho})}+\rho^{-50}\|v\|_{L^q}\\
&\lesssim\|T_mv\|_{L^\infty(\shell{R-R_0}{R+R_0})}+R^{-5}\|v\|_{L^q}.
}
Therefore, by the fundamental theorem of calculus,
\eqn{
|T_mv(x)|\geq\frac14\|T_mv\|_{L^\infty(\shell{R-R_0}{R+R_0})}
}
for all
\eqn{
x\in B\Big(x_0,\frac1{O(1)}\frac{\|T_mv\|_{L^\infty(\shell{R-R_0}{R+R_0})}}{\|T_mv\|_{L^\infty(\shell{R-R_0}{R+R_0})}+R^{-5}\|v\|_{L^q}}\Big).
}
Importantly, since $\rho\gg1$ and the radius of this ball is less than $1$, it is contained in $\shell{R-R_0-\frac\rho2}{R+R_0+\frac\rho2}$ where the gradient estimate holds. Without axial symmetry, this implies
\eqn{
\|T_mv\|_{L^q}\gtrsim\|T_mv\|_{L^\infty(\shell{R-R_0}{R+R_0})}\left(\frac{\|T_mv\|_{L^\infty(\shell{R-R_0}{R+R_0})}}{\|T_mv\|_{L^\infty(\shell{R-R_0}{R+R_0})}+R^{-5}\|v\|_{L^q}}\right)^{3/q}.
}
No matter which term in the denominator is larger, we conclude (using the ordinary Bernstein inequality for $T_m$ if the first is larger)
\eqn{
\|T_mv\|_{L^\infty(\shell{R-R_0}{R+R_0})}&\lesssim\|v\|_{L^q}.
}
Now suppose $|u|$, and consequently $|v|$, is axisymmetric. Let $\tilde T_m$ be the operator with kernel $|K(x,y)|$. Then by the triangle inequality, inside the same ball, we have the concentration
\eqn{
|\tilde T_m(|v|)(x)|\geq\frac14\|T_mv\|_{L^\infty(\shell{R-R_0}{R+R_0})}.
}
Thanks to the assumption that $m$ is axisymmetric, one easily computes that $\tilde T_m(|v|)$ is as well. Thus, the bound still holds inside the torus obtained by rotating the ball around the $x_3$-axis. (See Figure \ref{fig}.) Note that within this torus, $r\gtrsim R$; therefore
\eqn{
\|\tilde T_m(|v|)\|_{L^q}&\gtrsim\|T_mv\|_{L^\infty(\shell{R-R_0}{R+R_0})}R^{1/q}\\
&\quad\times\left(\frac{\|T_mv\|_{L^\infty(\shell{R-R_0}{R+R_0})}}{\|T_mv\|_{L^\infty(\shell{R-R_0}{R+R_0})}+R^{-5}\|v\|_{L^q}}\right)^{2/q}.
}
Once again, no matter which term in the denominator is larger, this implies
\eqn{
\|T_mv\|_{L^\infty(\shell{\frac R2}{\frac{3R}2})}\lesssim R^{-\frac1q}\|v\|_{L^q}.
}
Since $\supp\chi_R\subset\shell{\frac R2}{\frac{3R}2}$, it follows that
\eqn{
\|T_{R,S}u\|_{L^\infty}&\lesssim R^\alpha\|T_mr^{-\beta}\chi_Su\|_{L^\infty(\shell{\frac R2}{\frac{3R}2})}\lesssim R^{\alpha-\beta}\|u\|_{L^q},
}
or
\eqn{
\|T_{R,S}u\|_{L^\infty}&\lesssim R^{\alpha-\beta-\frac1q}\|u\|_{L^q}
}
in the presence of axial symmetry. By interpolating with the trivial inequality
\eqn{
\|T_{R,S}u\|_{L^1}&\lesssim R^{\alpha-\beta}\|u\|_{L^1},
}
we obtain, if $q\leq p$,
\eqn{
\|T_{R,S}\|_{L^q\to L^p}&\lesssim R^{\alpha-\beta}
}
or
\eqn{
\|T_{R,S}\|_{L^q\to L^p}&\lesssim R^{\alpha-\beta+\frac1p-\frac1q}
}
in the presence of axial symmetry.

\usetikzlibrary{shapes.multipart}

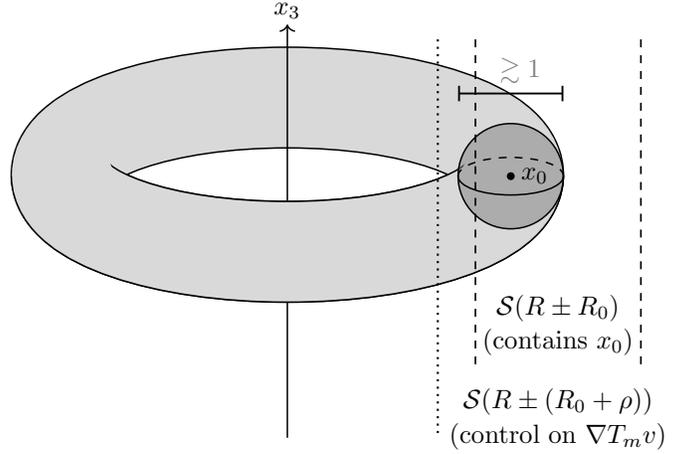
\begin{figure}
\centering
\begin{tikzpicture}[every text node part/.style={align=center}]
    \pic[line width=.2mm,fill=gray!30]{torus={3cm}{.66cm}{70}};
    \draw[->,line width=.2mm](0,-.35)--(0,2);
    \draw[line width=.2mm](0,-3.5)--(0,-1.69);
    \begin{scope}[shift={(.15,-.02)},scale=1,fill opacity=.5,line width=.2mm]
    \draw[line width=.2mm,fill=gray!100](2.82,0)circle(.7cm);
    \draw[line width=.2mm] (2.12,0) arc (180:360:.7 and 0.25);
    \draw[line width=.2mm,dashed] (3.52,0) arc (0:180:.7 and 0.25);
    \filldraw[black,opacity=1] (02.82,0) circle (1.2pt) node[anchor=west] {\small$x_0$};
    \draw[|-|,line width=.25mm,text opacity=1](2.12,1.1)--node[above]{\small$\gtrsim1$}(3.52,1.1);
    \end{scope}
    \node at (0,2.18) {\small$x_3$};
    \draw[dashed,line width=.22mm](2.5,1.8)--(2.5,-2.6);
    \draw[dashed,line width=.22mm](4.7,1.8)--(4.7,-2.6);
    \node at(3.6,-2.0){\small$\mathcal S(R\pm R_0)$\\\small (contains $x_0$)};
    \draw[dotted,line width=.28mm](2,1.8)--(2,-3.5);
    \draw[dotted,line width=.28mm](5.2,1.8)--(5.2,-3.5);
    \node at(3.6,-3.25){\small$\mathcal S(R\pm (R_0+\rho))$\\\small (control on $\grad T_mv$)};
  \end{tikzpicture}
  \caption{We locate a concentration point $x_0$ inside the cylindrical shell $\shell{R-R_0}{R+R_0}$ (delimited by the dashed lines) and, using the gradient estimate on $T_mv$ which holds in the region $\shell{R-R_0-\rho}{R+R_0+\rho}$ (delimited by the dotted lines), one can deduce a comparable estimate holds within a ball which (in the most nontrivial case) has radius at least on the order of $1$. In the case of axial symmetry, we can infer from the pointwise lower bound in the ball that the same lower bound holds within the solid torus obtained by rotating it around the $x_3$-axis. } \label{fig}
\end{figure}

Finally, we can sum over $R,S\in 2^\mathbb Z$ to obtain the desired estimate. Let $\tilde\chi_S$ be a dilated version of $\chi_S$ such that $\chi_S\tilde\chi_S=\chi_S$. Then
\eqn{
Tu=\sum_{\max(R/S,S/R)>100}T_{R,S}(\tilde\chi_Su)+\sum_{1/100\leq R/S\leq100}T_{R,S}(\tilde\chi_Su)
}
where each $x\in\Rt$ lies in the support of boundedly many terms. This implies that without axial symmetry,
\eqn{
\|Tu\|_{L^p}^p&\lesssim\sum_R\left(\sum_{\{S\,:\,\max(R/S,S/R)>100\}}R^{\alpha+\frac2p}S^{-\beta+\frac2{q'}}\langle\max(R,S)\rangle^{-100}\|\tilde\chi_Su\|_{L^q}\right)^p\\
&\quad+\sum_{\substack{1/100\leq R/S\leq100\\\max(R,S)\leq1}}(R^{\alpha-\beta+\frac2p+\frac2{q'}}\|u\|_{L^q})^p+\sum_{\substack{1/100\leq R/S\leq100\\\max(R,S)>1}}(R^{\alpha-\beta}\|\tilde\chi_Su\|_{L^q})^p
}
with the suitable modification if $p=\infty$, in the sense that we are taking an $\ell^p(2^\mathbb Z)$ norm in $R$. When $p<\infty$, the sums converge as geometric series and are bounded by $\|u\|_{L^q}^p$ as long as $\alpha>-\frac2p$, $\beta<\frac2{q'}$, and $\alpha<\beta$. If $p=\infty$, the expression is similarly bounded as long as $0\leq\alpha\leq\beta<\frac2{q'}$. If $\alpha=\beta$, then summability of the last term follows from the embedding $\ell^q(2^\mathbb Z)\to\ell^p(2^\mathbb Z)$ (using $p\geq q$), and the fact that $\sum_{R\geq1}\|\tilde\chi_Ru\|_{L^q}^q\lesssim\|u\|_{L^q}^q$. A similar argument applies to the first term on the right-hand side in the case $q=1$, $\beta=0$.

In the case where $|u|$ has axial symmetry, we carry out an analogous calculation and find the same result except with the last condition relaxed to $\alpha\leq\beta+\frac1q-\frac1p$ thanks to the smaller power of $R$ in the last term.

Now we show how to remove the assumption that $m$ is axisymmetric. Note that $P_{\leq10}$ does have an axisymmetric symbol; moreover $P_{\leq10}T_m=T_m$. Therefore if $|u|$ is axisymmetric,
\eqn{
\|r^\alpha P_{\leq10}u\|_{L^p}\lesssim\|r^\beta u\|_{L^q}
}
assuming $p\geq q$, $\alpha>-\frac2p$, $\beta<\frac2{q'}$, and $\alpha\leq\beta+\frac1q-\frac1p$ (with the appropriate adjustment in the two endpoint cases). Then by the non-axisymmetric version of the theorem,
\eqn{
\|r^\alpha T_mu\|_{L^p}&\lesssim\|r^\alpha P_{\leq10}u\|_{L^p}
}
which yields the desired result. Note that we have $\alpha\leq\beta+\frac1q-\frac1p<2-\frac1q-\frac1p\leq\frac2{p'}$ as required.
\end{proof}

\begin{remark}\label{counterexamples}
Later in the paper, most notably in the proof of Proposition \ref{hierarchy}, we will be applying Proposition \ref{bernsteinprop} in an iterative procedure which will lead to some laborious checking of its hypotheses. The reader may find it illuminating to keep in mind some examples which show why each one is necessary. For simplicity we take $N=1$ and $T_m=P_1$.

Since $P_1u$ is approximately constant on balls of radius $O(1)$, when $p<\infty$, in order for $r^{\alpha p}|P_1u|^p$ to be integrable in such a ball centered on the $x_3$-axis, we need $\alpha p>-2$, or $\alpha>-\frac2p$. Of course when $p=\infty$, there is no such integrability issue as long as $\alpha\geq0$. Next, let $u=\phi(x)/(r^2+\epsilon^2)$ where $u$ is a bump function supported in $B(0,1)$. By the same uncertainty principle heuristic, one finds that $\|r^\alpha P_1u\|_{L^p}$ is comparable to $\log\frac1\epsilon$, but
\begin{equation*}
\|r^\beta u\|_{L^q}\sim\begin{cases}\epsilon^{-\frac2{q'}+\beta},&\beta<\frac2{q'}\\\log^{1/q}\frac1\epsilon,&\beta=\frac2{q'}\\
1,&\beta>\frac2{q'}\end{cases}.
\end{equation*}
By taking $\epsilon$ sufficiently small, we find that the proposition can hold only when either $\beta<\frac2{q'}$ or $q=1$ and $\beta=0$. Let $u$ be a bump function supported in $B(x_0,1)$ where $r(x_0)=R\gg1$. Then \eqref{bernstein} asserts $R^\alpha\lesssim R^\beta$. By taking $R$ sufficiently large, we see $\alpha\leq\beta$. Similarly, consider a smooth axisymmetric function supported in the annulus $\{x\in\Rt:(r-R)^2+x_3^2<1\}$ where $R\gg1$. Then \eqref{bernstein} becomes $R^{\alpha+\frac1p}\lesssim R^{\beta+\frac1q}$ which necessitates $\alpha\leq\beta+\frac1q-\frac1p$.
\end{remark}

As in \cite{tao}, this Bernstein inequality can be localized to a region, at the cost of a global term that can be made small by enlarging the region by a length $\gg N^{-1}$.

\begin{prop}\label{localbernsteinprop}
Let $m$ be a multiplier with $\supp m\subset B(0,N)$ such that
\begin{align*}
|\grad^jm|\leq MN^{-j}
\end{align*}
for $j=0,1,\ldots,2K$ where $K\geq100$. Also let $\Omega\subset\mathbb R^3$ be open and $\Omega_{A/N}=\{x\in\mathbb R^3:\dist(x,\Omega)<A/N\}$. Then
\begin{equation}\begin{aligned}\label{localbernstein}
\|r^{\alpha_1}T_mu\|_{L^{p_1}(\Omega)}&\lesssim_KMN^{\frac3{q_1}-\frac3{p_1}+\beta_1-\alpha_1}\|r^{\beta_1}u\|_{L^{q_1}(\Omega_{A/N})}\\
&\quad+A^{-K}Mr(\Omega)^{\alpha_1-\alpha_2}|\Omega|^{\frac1{p_1}-\frac1{p_2}}N^{\frac3{q_2}-\frac3{p_2}+\beta_2-\alpha_2}\|r^{\beta_2}u\|_{L^{q_2}(\mathbb R^3)}
\end{aligned}\end{equation}
if $p_i\geq q_i$, $p_1\leq p_2$, $\alpha_1\geq\alpha_2$, $\alpha_i>-\frac2{p_i}$, $\beta_i<\frac2{q_i'}$, and $\alpha_i\leq\beta_i$ for $i=1,2$. Here $r(\Omega)$ denotes $\sup\{r:x\in\Omega\}$.

If $|u|$ is axisymmetric, the last condition can be weakened to $\alpha_i\leq\beta_i+\frac1{q_i}-\frac1{p_i}$. As in Proposition \ref{bernstein}, the result extends to the $p_i=\infty$, $\alpha_i=0$ and $q_i=1$, $\beta_i=0$ endpoints.
\end{prop}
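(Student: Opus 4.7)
The approach is the standard one for converting a global Fourier-multiplier estimate into a localized version: decompose $u$ into a near and a far part with respect to $\Omega$, apply Proposition \ref{bernsteinprop} to the near part to obtain the first term on the right, and exploit the rapid spatial decay of the kernel of $\hat m$ on the far part to extract the factor of $A^{-K}$ in the second term. One may assume $A\geq 1$, since if $A<1$ then $A^{-K}\geq 1$ and the entire estimate already follows from applying Proposition \ref{bernsteinprop} with the $i=2$ parameters globally and then passing from $L^{p_2}(\mathbb R^3)$ to $L^{p_1}(\Omega)$ via H\"older and the pointwise weight bound $r^{\alpha_1}\leq r(\Omega)^{\alpha_1-\alpha_2}r^{\alpha_2}$.

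Set $u_{\mathrm{near}}:=u\,\mathbf{1}_{\Omega_{A/N}}$ and $u_{\mathrm{far}}:=u-u_{\mathrm{near}}$. Because the hypotheses for $i=1$ are exactly those required by Proposition \ref{bernsteinprop}, applying it to $u_{\mathrm{near}}$ together with $L^{p_1}(\Omega)\hookrightarrow L^{p_1}(\mathbb R^3)$ yields the first term of \eqref{localbernstein}. For the far part, integrating by parts $2K$ times in the Fourier inversion formula for $\hat m$, using $|\grad^j m|\leq MN^{-j}$ for $j\leq 2K$, gives the pointwise kernel bound $|\hat m(z)|\lesssim_K MN^3\langle Nz\rangle^{-2K}$. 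For $x\in\Omega$ and $y\in\Omega_{A/N}^c$ one has $N|x-y|\geq A\geq 1$, hence $\langle N(x-y)\rangle^{-2K}\leq A^{-K}\langle N(x-y)\rangle^{-K}$, and therefore
\begin{align*}
|T_m u_{\mathrm{far}}(x)|\lesssim_K MA^{-K}(\Phi_N\ast|u|)(x),\qquad \Phi_N(z):=N^3\langle Nz\rangle^{-K},\quad x\in\Omega.
\end{align*}

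The kernel $\Phi_N$ shares the rapid spatial decay at scale $N^{-1}$ of the kernels used in the proof of Proposition \ref{bernsteinprop}, and rerunning the dyadic decomposition $T=\sum_{R,S}T_{R,S}$ from that proof verbatim (with the numerical decay exponent $50$ there replaced by $K/2\geq 50$) produces the analogous weighted convolution estimate
\begin{align*}
\|r^{\alpha_2}(\Phi_N\ast|u|)\|_{L^{p_2}(\mathbb R^3)}\lesssim N^{\frac{3}{q_2}-\frac{3}{p_2}+\beta_2-\alpha_2}\|r^{\beta_2}u\|_{L^{q_2}(\mathbb R^3)}
\end{align*}
under the hypotheses for $i=2$. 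Combining this with the pointwise weight bound $r(x)^{\alpha_1}\leq r(\Omega)^{\alpha_1-\alpha_2}r(x)^{\alpha_2}$ on $\Omega$ (valid since $\alpha_1\geq\alpha_2$) and the H\"older inclusion $\|\cdot\|_{L^{p_1}(\Omega)}\leq|\Omega|^{1/p_1-1/p_2}\|\cdot\|_{L^{p_2}(\Omega)}$ (valid since $p_1\leq p_2$) yields the second term of \eqref{localbernstein}. The axisymmetric case is handled by the same scheme, using the axisymmetric versions of Proposition \ref{bernsteinprop} and of the convolution estimate. The main obstacle I expect is that $|u_{\mathrm{near}}|$ fails to be axisymmetric in general even when $|u|$ is, so the torus-concentration step in the proof of Proposition \ref{bernsteinprop} must be reapplied using the axisymmetry of the full function $|u|$ rather than of $|u_{\mathrm{near}}|$; this costs some bookkeeping but does not affect the final exponents, and in particular the relaxed condition $\alpha_i\leq\beta_i+\frac{1}{q_i}-\frac{1}{p_i}$ is preserved.
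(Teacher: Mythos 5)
Your proof follows essentially the same strategy as the paper: split $u$ into near and far pieces relative to $\Omega_{A/N}$ via the triangle inequality, estimate the near piece directly by Proposition \ref{bernsteinprop}, and treat the far piece by exploiting the rapid spatial decay of the kernel of $T_m$ together with the gap $|x-y|\geq A/N$ to extract the factor $A^{-K}$, after which the dyadic $T_{R,S}$ decomposition is rerun; the H\"older factor $|\Omega|^{1/p_1-1/p_2}$ and the weight swap $r^{\alpha_1}\le r(\Omega)^{\alpha_1-\alpha_2}r^{\alpha_2}$ enter in the same way in both. The only cosmetic difference is that you extract $A^{-K}$ at the pointwise kernel level before dyadic decomposition, while the paper folds it directly into the $K_{R,S}$ bound.

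The issue you flag---that $|u_{\mathrm{near}}|$ is generally not axisymmetric even when $|u|$ is, because $\mathbf 1_{\Omega_{A/N}}$ need not be axisymmetric for a general open $\Omega$---is legitimate, but your proposed patch ("reapply the torus argument using the axisymmetry of the full $|u|$, bookkeeping") does not actually close it. If in the diagonal $R\sim S\gg N$ torus-concentration step one uses the full $r^{-\beta}\chi_S u$ instead of the restricted function, the resulting $L^{q_1}$ norm on the right is the global $\|r^{\beta_1}u\|_{L^{q_1}(\mathbb R^3)}$, not the localized $\|r^{\beta_1}u\|_{L^{q_1}(\Omega_{A/N})}$ appearing in the first term of \eqref{localbernstein}. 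To be fair, the paper's own proof glides past the same point ("the part inside can be estimated directly using \eqref{bernstein}"), which is unproblematic when $\Omega$ is axisymmetric but is applied in Proposition \ref{backpropagationprop} with $\Omega$ a ball $B_n$. Either the statement should be restricted to axisymmetric $\Omega$ in the improved-exponent case, or one should argue that the offending near-diagonal blocks at large $R\sim S$ are in any event controlled by the global second term of \eqref{localbernstein}; neither your ``bookkeeping'' gloss nor the paper's phrasing supplies that step.
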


We refer to the second term on the right-hand side of \eqref{localbernstein} as the global term. Regardless of what kind of $\X p\alpha T$ control is known for $u$, it is usually possible to make it negligible provided the length scale of $\Omega$ is much smaller than $N^{-1}$.

\begin{proof}
Once again we can rescale to achieve $N=M=1$. Observe by the triangle inequality that it suffices to assume $u$ is supported outside $\Omega_A$, since the part inside can be estimated directly using \eqref{bernstein}. First one uses H\"older's inequality to control the $L^{q_1}$ norm by $L^{q_2}$, as well as the trivial bound $r^{\alpha_1}\leq r^{\alpha_2}r(\Omega)^{\alpha_1-\alpha_2}$. Adopting the notation from the proof of Proposition \ref{bernsteinprop}, we are concerned with estimating convolutions in the form
\eqn{
T_{R,S}(y)=\int_{\mathbb R^3}K_{R,S}(x,y)u(x)dx,
}
but with the additional feature that $y\in\Omega$ and $x\notin\Omega_A$, so $|x-y|\geq A$. Therefore, the estimate for the kernel can be improved to
\eqn{
|K_{R,S}(x,y)|\lesssim_K R^\alpha S^{-\beta}\chi_R(y)\chi_S(x)\langle x-y\rangle^{-50}A^{-K}
}
and one proceeds as in Proposition \ref{bernsteinprop}.
\end{proof}

As a special case of Proposition \ref{bernsteinprop}, with $m=e^{-t|\xi|^2}\psi(\xi/N)$, we get the heat estimate
\begin{align}\label{PNheat}
\|r^\alpha e^{t\Delta}P_N\grad^ju\|_{L^p}\lesssim_je^{-tN^2/20}N^{j+\frac3q-\frac3p+\beta-\alpha}\|r^\beta u\|_{L^q}
\end{align}
under the same assumptions on the parameters. Then summing over $N\in2^\mathbb Z$,
\begin{align}\label{heat}
\|r^\alpha e^{t\Delta}\grad^ju\|_{L^p}\lesssim_jt^{-\frac12(j+\frac3q-\frac3p+\beta-\alpha)}\|r^\beta u\|_{L^q}.
\end{align}

\section{Key estimates for critically bounded solutions}\label{estimates}

\subsection{Decomposition of the solution into $\ulin$ and $\unlin$}

In the spirit of \cite{cp}, \cite{gkp2}, and \cite{ab}, we introduce a decomposition of $u$ related to the Picard iterates of \eqref{NS}. Using Duhamel's principle, we can bootstrap a variety of estimates for $\ulin$ and $\unlin$ in $\X p\alpha T$ spaces. Roughly speaking, $\ulin$ is well-controlled for larger $p$ and $\unlin$ for smaller $p$ (see the discussion in Section \ref{intro}). Later on, this will be essential for using energy methods, in addition to other parts of the argument, for which we need control on $u$ (or at least the roughest component of $u$) in unweighted $L_x^p$ spaces for $p<q$. It will be important too that $\unlin$ satisfies a forced Navier-Stokes equation with some additional linear terms with very regular coefficients.

In practice, this decomposition will be indexed by an integer $n$ which will be needed to become large in order to handle $q\to\infty$ and $q\downarrow2$. Inevitably, many of the estimates to follow on $\ulin_n$ and $\unlin_n$ will deteriorate as $n\to\infty$ but since $q$ is fixed, this is acceptable.

We begin by defining for $t\in[-T,0]$
\eqn{
\ulin_0=0,\quad\unlin_0=u.
}
Let $T_n=(\frac12+2^{-n})T$. Then for $n\geq1$ and $t\in[-T_n,0]$, we iteratively define
\eqn{
\ulin_n(t)&=e^{(t+T_n)\Delta}u(-T_n)-\int_{-T_n}^te^{(t-t')\Delta}\mathbb P\div\ulin_{n-1}\otimes\ulin_{n-1}(t')dt'\\
\unlin_n(t)&=-\int_{-T_n}^te^{(t-t')\Delta}\mathbb P\div(u\otimes u-\ulin_{n-1}\otimes\ulin_{n-1})(t')dt'.
}

By Duhamel's principle applied to the Navier-Stokes on $[-T_n,0]$ we see that for every $n$, these functions sum to $u$. The $n=1$ case corresponds to the vector fields $u^{\text{lin}}$ and $u^{\text{nlin}}$ in \cite{tao}, which would not suffice for our purposes because \eqref{unlinq>3} would only hold for $p\geq\frac q2$ (with a suitable weight).

\begin{prop}\label{hierarchy}
Assume $u$ is a classical solution of \eqref{NS} on $[-T,0]\times\Rt$ satisfying \eqref{critical} and \eqref{cases}. Then we have the following.
\begin{enumerate}
\item For $n=1,2,3,\ldots$ and $t\in[-\frac T2,0]$, $u$ admits the decomposition
\eq{
u=\ulin_n+\unlin_n.
}
\item If $p\geq q$ and either
\eqn{
q>3\quad\text{and}\quad-\frac2p<\alpha\leq\alpha_q
}
or
\eqn{
u\text{ is axisymmetric,}\quad 2<q\leq3,\quad\text{and}\quad-\frac2p<\alpha\leq\alpha_q+\frac1q-\frac1p,
}
then $\ulin_n$ satisfies the bound
\eq{\label{ulin-bound}
\|\grad^j\ulin_n\|_{\X p\alpha{T/2}}\lesssim_nT^{(\alpha-\alpha_p-j)/2}A^{O_n(1)}.
}
These bounds continue to hold at the $p=\infty$, $\alpha=0$ endpoint. For $N\geq T^{-1/2}$, there are also the frequency-localized estimates
\eq{\label{ulinPNq}
\|P_N\ulin_n\|_{\X q{\alpha_q}{T/2}}&\lesssim_ne^{-TN^2/O_n(1)}A^{O_n(1)}
}
and
\eq{\label{ulinPNinfty}
\|P_N\ulin_n\|_{\X\infty0{T/2}}&\lesssim_ne^{-TN^2/O_n(1)}NA^{O_n(1)}.
}
\item If $q\in(2,\infty)$ and $p_0\in(1,3)$, for any $n$ sufficiently large depending on $q$ and $p_0$,
\eq{\label{unlinq>3}
\|\unlin_n\|_{\X p0{T/2}}&\lesssim_n T^{-\alpha_p/2}A^{O_n(1)}
}
for all $p\in[p_0,3)$.
\end{enumerate}
\end{prop}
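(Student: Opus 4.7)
The plan is to establish the three parts in turn: part (1) follows immediately from Duhamel's formula, and parts (2) and (3) are both proved by induction on $n$, with $A^{O_n(1)}$ and $\lesssim_n$ absorbing polynomially-growing losses at each step. For (1), the solution $u$ itself obeys Duhamel on $[-T_n,0]$, $u(t)=e^{(t+T_n)\Delta}u(-T_n)-\int_{-T_n}^te^{(t-t')\Delta}\mathbb P\div(u\otimes u)(t')\,dt'$, which matches $\ulin_n+\unlin_n$ term-by-term; since $T_n\geq T/2$, the decomposition holds on $[-T/2,0]$.

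For part (2), the base case $n=1$ is the heat estimate \eqref{heat} applied to $u(-T_1)\in\X q{\alpha_q}T$, using $\alpha_q+3/q=1$ and $t+T_1\sim T$ to produce exactly $T^{(\alpha-\alpha_p-j)/2}A$. In the inductive step the initial-data piece is handled the same way, while the Duhamel integral is controlled via H\"older followed by \eqref{heat}: choosing a splitting $\beta=\beta_1+\beta_2$, $1/r=1/p_1+1/p_2$ with each $(p_i,\beta_i)$ in the admissible range of the inductive hypothesis (e.g.\ pairing $(q,\alpha_q)$ with the endpoint $(\infty,0)$) yields
\eqn{
\|r^\beta\ulin_{n-1}\otimes\ulin_{n-1}(t')\|_{L^r}\lesssim T^{(\beta-2+3/r)/2}A^{O_{n-1}(1)},
}
and integrating the resulting kernel $(t-t')^{-\sigma}$ with $\sigma=(j+1+3/r-3/p+\beta-\alpha)/2<1$ recombines the powers of $T$ via $\alpha_p=1-3/p$ into exactly $T^{(\alpha-\alpha_p-j)/2}A^{O_n(1)}$. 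The frequency-localized estimates \eqref{ulinPNq}--\eqref{ulinPNinfty} follow from the same induction: the initial-data piece gains $e^{-(t+T_n)N^2/c}\leq e^{-TN^2/O_n(1)}$ directly from \eqref{PNheat}, while the Duhamel integrand $\ulin_{n-1}\otimes\ulin_{n-1}$ inherits exponential frequency decay after a Littlewood-Paley decomposition together with the inductive hypothesis that every dyadic block of $\ulin_{n-1}$ already carries such a factor.

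For part (3), I would induct on $n$, tracking the growing range of $p$ for which $\unlin_n\in\X p0{T/2}$ is controlled. The identity $u\otimes u-\ulin_{n-1}\otimes\ulin_{n-1}=u\otimes\unlin_{n-1}+\unlin_{n-1}\otimes\ulin_{n-1}$ guarantees that for $n\geq 2$ every term in the Duhamel integrand carries a factor of $\unlin_{n-1}$; H\"older then pairs the inductive $\X{p_{n-1}}0{T/2}$-bound on $\unlin_{n-1}$ with either the critical $\|r^{\alpha_q}u\|_{L^q}\leq A$ or the part-(2) bound on $\ulin_{n-1}$, and \eqref{heat} with unweighted output produces $\|\unlin_n\|_{L^p}\lesssim T^{-\alpha_p/2}A^{O_n(1)}$. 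The range-expansion is driven by the relevant Bernstein weight constraint---$0\leq\alpha_q+1/r-1/p$ in the axisymmetric case $q\in(2,3]$ and $r\leq p$ together with $0\leq\alpha_q$ in the non-axisymmetric case $q>3$---each giving a positive $q$-dependent gain in the supremum of admissible $1/p$ per iteration, so finitely many iterations cover $[p_0,3)$.

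The main obstacle will be the careful bookkeeping needed to ensure the weights and exponents remain in the admissible region of Propositions \ref{bernsteinprop}--\ref{localbernsteinprop} at every step, in particular $\alpha>-2/p$ and the axisymmetric improvement $\alpha\leq\beta+1/q-1/p$. These constraints are tight in the regime $q\in(2,3]$ axisymmetric where $\alpha_q<0$ makes negative-weight splittings problematic, and it is precisely this tightness that forces $n$ to be large in part (3) as $p_0\to 1^+$ or $q\to 2^+$.
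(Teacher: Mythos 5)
Parts (1) and (2) follow the paper's route closely: Duhamel on $[-T_n,0]$ for the decomposition, and an induction on $n$ for $\ulin_n$ where the nonlinearity in the Duhamel integral is split by H\"older, the admissible parameter set being closed under (e.g.) the $(2p,\alpha/2)$ doubling that the paper uses. Your treatment of the frequency-localized estimates via paraproducts also matches.

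Part (3) contains a genuine gap, and it is not merely ``bookkeeping.'' You propose to induct on $n$ tracking only the \emph{unweighted} bounds $\|\unlin_n\|_{\X p0{T/2}}$, and in the Duhamel integrand to pair the inductive $\X{p_{n-1}}0{T/2}$ bound on $\unlin_{n-1}$ with either $\|r^{\alpha_q}u\|_{L^q}\leq A$ or the bound on $\ulin_{n-1}$. This scheme has no base case outside a narrow window of $q$. For $n=1$ the only input is $u\otimes u\in\X{q/2}{2\alpha_q}{T_1}$. Applying \eqref{heat} to land in $\X{p_1}0{T_1}$, the Bernstein embedding requires $p_1\geq q/2$, while integrability in time requires the exponent $\frac12\bigl(1+\frac6q-\frac3{p_1}+2\alpha_q\bigr)=\frac12\bigl(3-\frac3{p_1}\bigr)$ to be $<1$, i.e.\ $p_1<3$. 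For $q\geq6$ these are incompatible and there is \emph{no} admissible $p_1$, so the induction cannot start. Analogously, in the axisymmetric regime the improved Bernstein condition forces $\frac1{p_1}\leq 2-\frac4q$ together with $p_1<3$, which is empty for $q\leq\frac{12}5$. The paper escapes this precisely by proving the more general parametric bound \eqref{unlin}, carrying a \emph{range} of weights $\alpha\neq0$ (for instance $\alpha\in(\alpha_p,2\alpha_q)$ at $n=1$) through the induction, and only reading off the endpoint $\alpha=0$ once $n$ is large enough --- as illustrated concretely in Examples \ref{exgreater}--\ref{exless}, where all intermediate spaces have $\alpha>0$. Separately, your splitting $u\otimes u-\ulin_{n-1}\otimes\ulin_{n-1}=u\otimes\unlin_{n-1}+\unlin_{n-1}\otimes\ulin_{n-1}$ is algebraically valid but weaker than the paper's $\unlin_{n-1}\otimes\unlin_{n-1}+2\ulin_{n-1}\odot\unlin_{n-1}$: the pure $\unlin_{n-1}\otimes\unlin_{n-1}$ block lets H\"older halve the Lebesgue exponent per step, whereas $u\otimes\unlin_{n-1}$ always carries a full critically-weighted factor of $u$ in $L^q$, which caps the rate of exponent reduction. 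Both issues would need to be repaired for the argument to close across the whole range $q\in(2,\infty)$.
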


\begin{proof}
To prove \eqref{ulin-bound}, we claim slightly more strongly that under the stated conditions on $p$, $q$, $n$, and $\alpha$,
\eqn{
\|\grad^j\ulin_n\|_{\X p\alpha{T_{n+1}}}&\lesssim_jT^{(\alpha-\alpha_p-j)/2}A^{O_n(1)}
}
where $T_n=(\frac12+2^{-n})T$ as above. For $\ulin_1$, this is immediate from \eqref{critical} and \eqref{heat}. Suppose we have the desired inequality for some $\ulin_{n-1}$, $n-1\geq1$. From the triangle inequality,
\eqn{
\|\grad^j\ulin_n\|_{\X p\alpha{T_{n+1}}}&\lesssim\|\grad^je^{(t+T_n)\Delta}u(-T_n)\|_{\X p\alpha{T_{n+1}}}\\
&\quad+\int_{-T_n}^t\|r^\alpha\grad^je^{(t-t')\Delta}\mathbb P\div\ulin_{n-1}\otimes\ulin_{n-1}(t')\|_{L_x^p(\Rt)}dt'.
}
The first term is estimated in the same way as $\ulin_1$. For the second term, By H\"older's inequality and \eqref{heat},
\eqn{
\|\grad^je^{(t-t')\Delta}\mathbb P\div\ulin_{n-1}\otimes\ulin_{n-1}\|_{\X p\alpha{T_n}}&\lesssim(t-t')^{-\frac12}\|\grad^j(\ulin_{n-1}\otimes\ulin_{n-1})\|_{\X p\alpha{T_n}}\\
&\lesssim(t-t')^{-\frac12}\sum_{i_1+i_2=j}\Big(\|\grad^{i_1}\ulin_{n-1}\|_{\X{2p}{\alpha/2}{T_n}}\|\grad^{i_2}\ulin_{n-1}\|_{\X{2p}{\alpha/2}{T_n}}\Big).
}
The claimed conditions on $p$ and $\alpha$ are closed under the operation of doubling $p$ and halving $\alpha$, so we achieve the desired bound on $\ulin_n$ upon integrating in time.

Now let us address the frequency-localized estimates. We remark that \eqref{ulin-bound} can also be proven by estimating $P_N\ulin_n$ and summing in $N$, but this is less straightforward than the above method in some endpoint cases.

The $n=1$ case of \eqref{ulinPNq} is immediate from \eqref{PNheat}, and indeed it is true for all $N\geq c_1T^{-1/2}$ (with a constant depending on $c_1$). Suppose that for some $n-1\geq1$ we have the following version of \eqref{ulinPNq} with a slightly wider time interval,
\eqn{
\|P_N\ulin_{n-1}\|_{\X q{\alpha_q}{T_n}}&\lesssim e^{-N^2T/O_n(1)}A^{O_n(1)},
}
for all $N\geq c_1T^{-1/2}$. Then for $N\geq 1000c_1$ and $t\in[-T_{n+1},0]$, by \eqref{PNheat} and \eqref{critical},
\eqn{
\|r^{\alpha_q}P_N\ulin_n(t)\|_{L_x^q(\Rt)}&\lesssim e^{-N^2T/O_n(1)}A\\
&+\int_{-T_n}^te^{-(t-t')N^2/20}N^2\|r^{2\alpha_q}\tilde P_N(\ulin_{n-1}\otimes\ulin_{n-1})(t')\|_{L_x^{q/2}}.
}
Integrating in time, taking a paraproduct decomposition of the nonlinearity, and applying H\"older's inequality, the iterative estimate on $P_N\ulin_{n-1}$, and \eqref{ulin-bound}, the second term becomes
\eqn{
&\|\tilde P_N(P_{>N/100}\ulin_{n-1}\otimes\ulin_{n-1}+P_{\leq N/100}\ulin_{n-1}\otimes P_{>N/100}\ulin_{n-1})\|_{\X{q/2}{2\alpha_q}{T_n}}\\
&\quad\lesssim\sum_{N'>N/100}e^{-(N')^2T/O_n(1)}A^{O_n(1)}\\&\quad\lesssim e^{-N^2T/O_n(1)}A^{O_n(1)}.
}
Then \eqref{ulinPNq} follows by induction on $n$. Note that in order to obtain \eqref{ulinPNq} for a particular $n$ and all $N\geq T^{-1/2}$, one needs to take $c_1$ sufficiently small depending on $n$, since the permissible range for $N$ shrinks by a factor of $1000$ at each step (due to the frequency overlap of the Littlewood-Paley projections). Thus the constant in \eqref{ulinPNq} depends on $n$.

From here, \eqref{ulinPNinfty} is immediate. Indeed, by \eqref{bernstein} and \eqref{ulinPNq},
\eqn{
\|P_N\ulin_n\|_{\X\infty0{T/2}}=\|\tilde P_NP_N\ulin_n\|_{\X\infty0{T/2}}&\lesssim N\|P_N\ulin_n\|_{\X q{\alpha_q}{T/2}}\lesssim_ne^{-N^2T/O_n(1)}NA^{O_n(1)}.
}

Now we turn to estimating $\unlin_n$. The desired estimate \eqref{unlinq>3} is an immediate consequence of the following more general assertion: if $\max\left(1,\frac q{n+1}\right)\leq p\leq\infty$, and either
\eqn{
3<q<\infty,\quad \alpha_p<\alpha<2\min\Big(\frac1{p'},\alpha_q\Big),\quad1\leq n\leq q
}
or
\eqn{
u\text{ is axisymmetric,}\quad2<q\leq 3,\quad\alpha_p<\alpha<\min\left(\frac2{p'},(n+1)\Big(1-\frac2q\Big)-\frac1p\right)
}
then
\eq{\label{unlin}
\|\unlin_n\|_{\X p\alpha{T/2}}\lesssim_nT^{(\alpha-\alpha_p)/2}A^{O_n(1)}.
}
It is straightforward to see that by taking $\alpha=0$ and letting $n$ be large depending on $q$, these conditions reduce to \eqref{cases} and the hypotheses of \eqref{unlinq>3}. To prove \eqref{unlin}, let us decompose $\unlin_n=\unlinI_n+\unlinII_n$ where
\eqn{
\unlinI_n(t)&=-\int_{-T_n}^te^{(t-t')\Delta}\mathbb P\div\unlin_{n-1}\otimes\unlin_{n-1}(t')dt',\\ \unlinII_n(t)&=-2\int_{-T_n}^te^{(t-t')\Delta}\mathbb P\div\ulin_{n-1}\odot\unlin_{n-1}(t')dt'
}
and claim that for $3<q<\infty$, on the slightly larger interval $[-T_n,0]$, we have the desired bound for $\unlinI_n$ $n\geq1$, if
\eqn{
p\geq\frac q{2n},\quad\alpha_p<\alpha<2\min\left(\frac1{p'},2\alpha_q\right)
}
and for $\unlinII_n$ if
\eqn{
p\geq\frac q{n+1},\quad-\frac2p<\alpha<2\min\left(\frac1{p'},\alpha_q\right).
}
The desired result \eqref{unlin} will follow by taking the intersection of these two conditions. The base cases where $n=1$ are immediate from \eqref{heat}, H\"older's inequality, and \eqref{critical} (in fact, $\unlinII_1\equiv0$). On the other hand, the induction on $n$ involves fairly complicated relations between the parameters; thus the reader may find it elucidating to refer to Examples \ref{exgreater} and \ref{exless} which provide concrete examples of the iteration in the $q>3$ and $q\leq3$ cases respectively.

To induct, we assume \eqref{unlin} holds for the $\unlinI$ part for some $n-1\geq1$. By \eqref{bernstein} and H\"older's inequality, for $t\in[-T_n,0]$,
\eqn{
\|r^\alpha\unlinI_n(t)\|_{L_x^p}&\lesssim\int_{-T_n}^t(t-t')^{-\frac12(1+\frac6s-\frac3p+2\beta-\alpha)}\|r^{2\beta}\unlin_{n-1}\otimes\unlin_{n-1}(t')\|_{L_x^{s/2}}dt\\
&\lesssim T^{\frac12(\alpha-\alpha_p)}A^{O_n(1)}
}
assuming there exists an $s\in[2,\infty]$ and $\beta\in\mathbb R$ such that
\eqn{ \alpha\leq2\beta,\quad\frac1p\leq\frac2s\leq1,\quad\alpha>-\frac2p,\quad\beta<1-\frac2s }
which are required for Bernstein's inequality,
\eqn{
\frac6s-\frac3p+2\beta-\alpha<1
}
which is required to integrate in time, and
\eqn{ \frac1s\leq\frac nq,\quad\alpha_s<\beta<2\min\Big(\alpha_q,\frac1{s'}\Big)
}
which are needed so that $\unlin_{n-1}\in\X s\beta{T_{n-1}}$. Letting $\beta=1-\frac2s-\epsilon$ for a positive $\epsilon$ which we will take to be as small as needed depending on the other parameters, the conditions on $s$ reduce to
\eqn{
\max\Big(\frac1{2p},\frac3q-\frac12\Big)\leq\frac1s\leq\min\Big(\frac12,\frac{2-\alpha}4-,\frac{\alpha-\alpha_p}2,\frac nq\Big)
}
which one can check is a nonempty interval if in addition to the relations on $p,q,n,\alpha$ in the hypothesis, we assume
\eq{\label{case1}
\alpha\geq\max\Big(1-\frac2p,\frac6q-\frac3p\Big).
}
Next we let $\beta=\alpha_s+\epsilon$ and the conditions on $s$ become
\eqn{
\max\Big(\frac1{2p},\frac2q-\frac13+\Big)\leq\frac1s\leq\min\Big(\frac12,\frac{2-\alpha}6,\frac nq\Big)
}
which is nonempty and intersects with $[0,1]$ under the hypotheses, along with the additional assumption
\eq{\label{case2} \alpha\leq2-\frac3p. }
Then it is easy to see that as long as $q\geq3$, either \eqref{case1} or \eqref{case2} must be true. This completes the estimate of $\unlinI_n$. Next we have by \eqref{heat} and Holder, for $t\in[-T_n,0]$,
\eqn{
\|r^\alpha\unlinII_n(t)\|_{L_x^p}&\lesssim\int_{-T_n}^t(t-t')^{-\frac12(1+\frac3s-\frac3p+\beta-\alpha)}dt'\|\ulin_{n-1}\|_{\X{\tilde q}0{T_n}}\|\unlin_{n-1}\|_{\X{\frac{s\tilde q}{\tilde q-s}}\beta{T_n}}
}
which implies the desired bound if there exist $\tilde q$, $s$, and $\beta$ such that \eqn{ \alpha\leq\beta,\quad\frac1p\leq\frac1s,\quad\alpha>-\frac2p,\quad\beta<2-\frac2s } for Bernstein, \eqn{ \frac3s-\frac3p+\beta-\alpha<1 } to integrate in time, and
\eqn{
\frac1{\tilde q}\leq\frac1q,\quad0\leq\frac1s-\frac1{\tilde q}\leq\min\Big(1,\frac nq\Big),\quad1-\frac3s+\frac3{\tilde q}<\beta<2\min\Big(\alpha_q,1-\frac1s+\frac1{\tilde q}\Big)
}
to make $\ulin_{n-1}\in\X{\tilde q}0{T_{n-1}}$ and $\unlin_{n-1}\in\X{\frac{s\tilde q}{\tilde q-s}}\beta{T_{n-1}}$.
It suffices to use $\frac1{\tilde q}=\max(\frac1s-\frac nq,0)$. Let us first take $\beta=\alpha$. One can compute that the conditions on $s$ reduce to
\eq{
\label{caseII1} \alpha>1-\frac{3n}q
}
and
\eqn{
\max\Big(\frac1p,\frac{1-\alpha}3+\Big)\leq\frac1s\leq\min\Big(1-\frac\alpha2-,\frac13+\frac1p-,\frac{n+1}q\Big)
}
which is a nonempty interval intersecting with $[0,1]$, assuming \eqref{caseII1} and the original hypotheses on $p,q,\alpha,n$. Now let us instead take $\beta=1-\frac{3n}q+\epsilon$. Then the conditions reduce to
\eqn{
\alpha\leq1-\frac{3n}q,\quad\max\Big(\frac1p,\frac nq\Big)\leq\frac1s\leq\min\Big(\frac12+\frac{3n}{2q}-,\frac\alpha3+\frac1p+\frac nq-,\frac{n+1}q\Big)
}
and one can verify that this is a nonempty interval intersecting with $[0,1]$ if \eqref{caseII1} fails. Thus there always exists a suitable $s$.

Next consider the case with $2<q\leq3$ and $u$ axisymmetric. In the same manner as above, we actually prove the bound for the $n$th iterate on the slightly larger time interval $[-T_n,0]$. As a base for the induction we have
\eqn{
\|\unlin_1\|_{\X p\alpha{T_1}}&\lesssim\int_{-T_1}^t(t-t')^{-\frac12(2+\alpha_p-\alpha)}\|r^{2\alpha_q}u\otimes u(t')\|_{L_x^{q/2}}dt'\lesssim T^{\frac12(\alpha-\alpha_p)}A^2
}
assuming $p\geq\frac q2$, $-\frac2p<\alpha\leq2-\frac4q-\frac1p$, and $\frac3p+\alpha>1$, all of which follow from the assumptions.

Next suppose we have the desired estimate for some $\unlin_{n-1}$, $n-1\geq1$. Proceeding as in the $q>3$ case, the result follows for $\unlinI$ if there exists an $s\in[2,\infty]$ and $\beta\in\mathbb R$ such that
\eqn{
\alpha\leq2\beta+\frac2s-\frac1p,\quad\frac1p\leq\frac2s,\quad\alpha>-\frac2p,\quad\beta<1-\frac2s
}
for the axisymmetric Bernstein inequality,
\eqn{
\frac6s-\frac3p+2\beta-\alpha<1
}
to integrate in time, and
\eqn{
\frac1s\leq\frac nq,\quad\beta<\frac2{s'},\quad\alpha_s<\beta<n\Big(1-\frac2q\Big)-\frac1s
}
so that we have the same bound for $\unlin_{n-1}$. First we let $\beta=\alpha_s+\epsilon$ for a sufficiently small (depending on $\alpha$, $q$, $n$, etc.) $\epsilon>0$. With the given conditions on $\alpha,p,n$, these constraints reduce to
\eqn{
\max\left(\frac1p,1-n\Big(1-\frac2q\Big)+,0+\right)\leq\frac2s\leq\min\left(1-\frac12\Big(\alpha+\frac1p\Big),\frac{2n}q,1\right)
}
which one can verify is a nonempty interval if we assume additionally that
\eq{\label{assumptions}
\alpha\leq2-\frac3p.
}
Next we instead take $\beta=\frac12\Big(\alpha+\frac1p\Big)-\frac1s$, and the conditions reduce to
\eqn{
\max\left(\frac1{2p},\frac14\Big(2-\alpha-\frac1p\Big)+\right)\leq\frac1s\leq\min\left(1-\frac12\Big(\frac1p+\alpha\Big)-,\frac14\Big(1+\frac2p\Big)-,\frac nq,\frac12\right)
}
which is a nonempty interval assuming $\alpha>-\frac1p$. Clearly if this fails, then instead we can conclude by \eqref{assumptions}.

Next, we have
\eqn{
\|r^\alpha\unlinII_n(t)\|_{L_x^p}&\lesssim\int_{-T_n}^t(t-t')^{-\frac12(2+\frac3{s_1}+\frac2{s_2}-\frac3p-\frac2q+\beta-\alpha)}\|r^\beta\unlin_{n-1}(t')\|_{L_x^{s_1}}\\
&\quad\quad\quad\quad\times\|r^{1-\frac2q-\frac1{s_2}}\ulin_{n-1}(t')\|_{L_x^{s_2}}dt'
}
which can be estimated if there exist $s_1$, $s_2$, and $\beta$ such that
\eqn{
\frac1p\leq\frac1{s_1}+\frac1{s_2},\quad-\frac2p<\alpha\leq1+\beta+\frac1{s_1}-\frac1p-\frac2q,\quad\beta<1+\frac2q-\frac2{s_1}-\frac1{s_2}
}
for \eqref{bernstein},
\eqn{
\beta-\alpha+\frac3{s_1}+\frac2{s_2}<\frac3p+\frac2q
}
for integrability in time,
\eqn{
\frac1{s_1}\leq\frac nq,\quad\beta<\frac2{s_1'},\quad\alpha_{s_1}<\beta\leq n\Big(1-\frac2q\Big
)-\frac1{s_1}
}
to control $\unlin_{n-1}$, and
\eqn{
\frac1{s_2}\leq\frac1q,\quad-\frac1{s_2}<1-\frac2q
}
to control $\ulin_{n-1}$. First we take $\beta=\alpha_{s_1}+\epsilon$, and the conditions reduce to the existence of $s_1$ and $s_2$ such that
\eqn{
\frac1p-\frac1{s_1}\leq\frac1{s_2}\leq\min\left(\frac12\Big(\alpha-1+\frac3p+\frac2q\Big)-,\frac1q\right)
}
which one computes is nonempty and intersects $[0,1]$ assuming $s_1$ satisfies
\eqn{
\frac12\max\left(\frac1{p}+,\frac1{p'}-\alpha+,\frac2p,2-(n+1)\Big(1-\frac2q\Big)\right)\leq\frac1{s_1}+\frac1q\leq\min\left(1-\frac12\Big(\alpha+\frac1p\Big),\frac {n+1}q\right).
}
Such an $s_1$ exists in $[0,1]$ assuming
\eq{\label{assum}
\alpha\leq\min\Big(2-\frac3q,2-\frac2q-\frac1p\Big)
}
in addition to the given assumptions.

Next we instead take $\beta=\alpha-1+\frac2q-\frac1{s_1}+\frac1p$. The conditions on $\frac1{s_2}$ reduce to
\eqn{
\max\Big(0,\frac1p-\frac1{s_1}\Big)\leq\frac1{s_2}\leq\min\Big(2-\alpha-\frac1p-\frac1{s_1}-,\frac12+\frac1p-\frac1{s_1},\frac1q\Big)
}
which can be satisfied, along with the other constraints, as long as
\eqn{
\max\Big(0,\frac1p-\frac1q,1-\frac1q-\frac\alpha2-\frac1{2p}+\Big)\leq\frac1{s_1}\leq\min\Big(1,\frac12+\frac1p-,2-\alpha-\frac1p-,\frac nq\Big).
}
There exists such an $s_1$ if, along with the given assumptions, we have
\eqn{
\alpha>2\max\Big(0,1-\frac nq\Big)-\frac2q-\frac1p.
}
One computes that if this constraint fails, then instead we have \eqref{assum}.
\end{proof}

For the reader's convenience, we provide two examples of the iteration for estimating $\unlin_n$ in Proposition \ref{hierarchy}. In these special cases, it becomes routine to verify the many conditions at each step such as the hypotheses of Proposition \ref{bernsteinprop}. Moreover, one can more easily see how the iteration successively makes progress from \eqref{critical} toward the desired estimate.

Let us assume the more straightforward bounds \eqref{ulin-bound} and the $n=1$ case of \eqref{unlin} which follows directly from \eqref{critical}. In fact, in this case the upper bound required on $\alpha$ can be weakened slightly to $\alpha\leq2\alpha_q$ since there is no $\unlinII_1$ contribution.

\begin{example}[$q>3$ case]\label{exgreater}
Let $u$ be as in Proposition \ref{hierarchy}. By rescaling, we may assume $T=1$. With $q=8$, let us prove $\|\unlin_3\|_{\X20{1/2}}\lesssim A^{O(1)}$. We will make use of the estimates
\eqn{
\|\unlin_1\|_{\X4{1/4+\epsilon}{3/4}}\lesssim A^2,\quad
\|\ulin_n\|_{\X p0{T_{n+1}}}&\lesssim A^{2^{n-1}}
}
for $p\geq8$. We can fix, say, $\epsilon=1/100$. Putting the first bound into Duhamel's principle using \eqref{heat} and H\"older's inequality, for $t\in[-3/4,0]$,
\eqn{
\|r^{4\epsilon}\unlinI_2(t)\|_{L_x^3}&\lesssim\int_{-3/4}^t(t-t')^{-1+\epsilon}\|r^{\frac12+2\epsilon}\unlin_1\otimes\unlin_1(t')\|_{L_x^2}dt'\lesssim A^4.
}
Similarly for $\unlinII_2$,
\eqn{
\|r^{4\epsilon}\unlinII_2(t)\|_{L_x^3}&\lesssim\int_{-3/4}^t(t-t')^{-\frac58+\frac32\epsilon}\|r^{\frac14+\epsilon}\unlin_1\odot\ulin_1(t')\|_{L_x^3}dt'\lesssim A^3.
}
so in total we have
\eqn{
\|\unlin_2\|_{\X3{4\epsilon}{3/4}}\lesssim A^4.
}
Again applying this with Duhamel's formula and H\"older's inequality, for $t\in[-5/8,0]$,
\eqn{
\|\unlinI_3(t)\|_{L_x^2}&\lesssim\int_{-5/8}^t(t-t')^{-\frac34-4\epsilon}\|r^{8\epsilon}\unlin_2\otimes\unlin_2(t')\|_{L_x^{3/2}}dt'\lesssim A^8.
}
Next we have
\eqn{
\|\unlinI_2(t)\|_{L_x^{8/3}}&\lesssim\int_{-3/4}^t(t-t')^{-\frac{15}{16}-\epsilon}\|r^{\frac12+2\epsilon}\unlin_1\otimes\unlin_1(t')\|_{L_x^2}dt'\lesssim A^4
}
and
\eqn{
\|\unlinII_2(t)\|_{L_x^{8/3}}&\lesssim\int_{-3/4}^t(t-t')^{-\frac58-\frac\epsilon2}\|r^{\frac14+\epsilon}\unlin_1\odot\ulin_1(t')\|_{L_x^{8/3}}dt'\\
&\lesssim\|r^{\frac14+\epsilon}\unlin_1\|_{\X4{1/4+\epsilon}{3/4}}\|\ulin_1\|_{\X80{3/4}}\lesssim A^3
}
so
\eqn{
\|\unlin_2\|_{\X{8/3}0{3/4}}\lesssim A^4.
}
Finally,
\eqn{
\|\unlinII_3(t)\|_{L_x^2}&\lesssim\int_{-5/8}^t(t-t')^{-\frac12}\|\unlin_2\odot\ulin_2(t')\|_{L_x^2}dt'\lesssim\|\unlin_2\|_{\X{8/3}0{5/8}}\|\ulin_2\|_{\X80{5/8}}\lesssim A^6.
}
In conclusion,
\eqn{
\|\unlin_3\|_{\X20{1/2}}&\lesssim A^8.
}
The argument can be schematized as follows:

\begin{center}
\begin{tikzcd}[row sep=tiny]
&&\X3{4\epsilon}{3/4}\arrow[dr,Rightarrow]\\
\X8{5/8}1\arrow[Rightarrow,r]&\X4{1/4+\epsilon}{3/4}\arrow[ur,Rightarrow]\arrow[dr,Rightarrow]&&\X20{1/2}\\
&&\X{8/3}0{3/4}\arrow[ru,Rightarrow]&\\[-1.5ex]
\unlin_0=u&\unlin_1&\unlin_2&\unlin_3
\end{tikzcd}
\end{center}

The arrows indicate that we used that $\unlin_{n-1}$ is in one space to prove that $\unlin_n$ is in the next space, and each column corresponds to a particular $n$. The main point of the iteration when $q>3$ is to prove estimates in lower integrability spaces. One can see that the iteration makes progress by using the quadratic nonlinearity to reduce the exponent at each step. The bottleneck in doing so is the $\unlinII$ contribution because of the limited range of $p$ for which \eqref{ulin-bound} holds.
\end{example}

\begin{example}[$2<q\leq3$ axisymmetric case]\label{exless}
Now we let $q=5/2$ and set out to prove $\|\unlin_3\|_{\X{3/2}0{1/2}}\lesssim A^{O(1)}$. We will assume the estimates
\eqn{
\|\unlin_1\|_{\X3\epsilon{3/4}}\lesssim A^2,\quad\|\ulin_1\|_{\X\infty{1/5}{3/4}}\lesssim A,\quad\|\ulin_2\|_{\X{5/2}{-1/5}{5/8}}A^2
}
again for some fixed small $\epsilon>0$. For $t\in[-3/4,0]$,
\eqn{
\|r^{4\epsilon}\unlinI_2(t)\|_{L_x^3}&\lesssim\int_{-3/4}^t(t-t')^{-1+\epsilon}\|r^{2\epsilon}\unlin_1\otimes\unlin_1(t')\|_{L_x^{3/2}}dt'\lesssim A^4
}
and
\eqn{
\|r^{4\epsilon}\unlinII(t)\|_{L_x^3}&\lesssim\int_{-3/4}^t(t-t')^{-\frac1{10}+\frac32\epsilon}\|r^{\frac15+\epsilon}\unlin_1\odot\ulin_1(t')\|_{L_x^3}dt'\\
&\lesssim\|r^\epsilon\unlin_1\|_{\X3\epsilon{3/4}}\|\ulin_1\|_{\X\infty{1/5}{3/4}}\lesssim A^3
}
which imply
\eqn{
\|\unlin_2\|_{\X3{4\epsilon}{3/4}}\lesssim A^4.
}
Similarly,
\eqn{
\|r^{\frac15+4\epsilon}\unlinI_2(t)\|_{L_x^{15/4}}&\lesssim\int_{-3/4}^t(t-t')^{-1+\epsilon}\|r^{2\epsilon}\unlin_1\otimes\unlin_1(t')\|_{L_x^{3/2}}dt'\lesssim A^4
}
and
\eqn{
\|r^{\frac15+4\epsilon}\unlinII_2(t)\|_{L_x^{15/4}}&\lesssim\int_{-3/4}^t(t-t')^{-\frac35+\frac32\epsilon}\|r^{\frac15+\epsilon}\unlin_1\odot\ulin_1(t')\|_{L_x^3}dt'\\
&\lesssim\|\unlin_1\|_{\X3{\epsilon}{3/4}}\|\ulin_1\|_{\X\infty{1/5}{3/4}}\lesssim A^3
}
which imply
\eqn{
\|\unlin_2\|_{\X{15/4}{1/5+4\epsilon}{3/4}}\lesssim A^4.
}
Finally, for $t\in[-5/8,0]$,
\eqn{
\|\unlinI_3(t)\|_{L_x^{3/2}}&\lesssim\int_{-5/8}^t(t-t')^{-\frac12-\epsilon}\|r^{2\epsilon}\unlin_2\otimes\unlin_2(t')\|_{L_x^{3/2}}dt'\lesssim A^8
}
and
\eqn{
\|\unlinII_3(t)\|_{L_x^{3/2}}&\lesssim\int_{-5/8}^t(t-t')^{-\frac12-2\epsilon}\|r^{4\epsilon}\unlin_2\odot\ulin_2\|_{L_x^{3/2}}\lesssim\|\unlin_2\|_{\X{15/4}{1/5+4\epsilon}{5/8}}\|\ulin_2\|_{\X{5/2}{-1/5}{5/8}}\\
&\lesssim A^6
}
which imply
\eqn{
\|\unlin_3\|_{\X{3/2}0{1/2}}\lesssim A^8.
}
Below is the strategy of the iteration.
\begin{center}
\begin{tikzcd}[row sep=tiny]
&&\X3{4\epsilon}{3/4}\arrow[dr,Rightarrow]\\
\X{5/2}{-1/2}1\arrow[Rightarrow,r]&\X3{\epsilon}{3/4}\arrow[ur,Rightarrow]\arrow[dr,Rightarrow]&&\X{3/2}0{1/2}\\
&&\X{15/4}{1/5+4\epsilon}{3/4}\arrow[ru,Rightarrow]&\\[-1.5ex]
\unlin_0=u&\unlin_1&\unlin_2&\unlin_3
\end{tikzcd}
\end{center}
Here in the $q<3$ case, the main issue is that we require an estimate with $\alpha=0$, but we are only given \eqref{critical} which has $\alpha_q<0$. The iteration above exploits the fact that \eqref{bernstein} allows one to increase the power $\alpha$ at the cost of increasing the integrability exponent. We can pay this cost thanks to the exponent halving coming from the quadratic nonlinearity and H\"older's inequality.
\end{example}

\subsection{Epochs of regularity}

An essential step in the proof of Proposition \ref{main} consists of using a Carleman estimate (Proposition \ref{uniquecont}) to show that concentrations of the solution near $x=0$ imply additional concentration in regions far from the $x_3$-axis. Since the Carleman estimate demands some pointwise regularity of the solution, it is important that it be applied within an ``epoch of regularity'' which we construct in Proposition \ref{epochs}. This is an extension of Proposition 3.1(iii) in \cite{tao}.

\begin{prop}\label{epochs}
Let $u:[t_0-T,t_0]\times\Rt\to\Rt$ be a classical solution of \eqref{NS} satisfying \eqref{critical} and \eqref{cases}. Then for any interval $I$ in $[t_0-T/2,t_0]$, there is a subinterval $I'\subset I$ with $|I'|\gtrsim A^{-O(1)}|I|$ such that
\eqn{
\|\grad^ju\|_{L_{t,x}^\infty(I\times\mathbb R^3)}\lesssim A^{O(1)}|I|^{-(j+1)/2}
}
and
\eqn{
\|\grad^j\omega\|_{L_{t,x}^\infty(I\times\mathbb R^3)}\lesssim A^{O(1)}|I|^{-(j+2)/2}
}
for $j=0,1$.
\end{prop}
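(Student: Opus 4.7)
The plan is to adapt Tao's enstrophy-based proof of epochs of regularity \cite[Prop.\ 3.1(iii)]{tao} to the weighted setting via the decomposition $u=\ulin_n+\unlin_n$ from Proposition~\ref{hierarchy}. In Tao's setting, $u\in L_t^\infty L_x^3$ is used directly in the energy/enstrophy computation; here \eqref{critical} does not give unweighted $L^2$ control on $u$, so one runs the $L^2$-based argument on $\unlin_n$, which plays the role of $u$, while treating $\ulin_n$ (which is controlled pointwise on scale $T$) as a smooth coefficient.

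First, fix $n=n(q)$ so large that Proposition~\ref{hierarchy}(3) provides $\|\unlin_n\|_{L_t^\infty L_x^p}\lesssim_n T^{-\alpha_p/2}A^{O_n(1)}$ for every $p\in[p_0,3)$ with $p_0<2$, and in particular $L_t^\infty L_x^2$ control on $\unlin_n$. Summing the dyadic bounds \eqref{ulinPNq}, \eqref{ulinPNinfty} over $N\in 2^\mathbb Z$ at the same time yields $\|\grad^j\ulin_n\|_{L_{t,x}^\infty([-T/2,0]\times\Rt)}\lesssim_n T^{-(j+1)/2}A^{O_n(1)}$ for $j=0,1$ (with analogous bounds in the higher-integrability spaces $L_x^p$).

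Next, using $u=\ulin_{n-1}+\unlin_{n-1}$ in the Duhamel definition of $\unlin_n$, the sharp piece satisfies the forced Navier--Stokes equation
\eqn{
\dd_t\unlin_n-\Delta\unlin_n+\mathbb P\div\bigl(\unlin_{n-1}\otimes\unlin_{n-1}+2\,\ulin_{n-1}\odot\unlin_{n-1}\bigr)=0.
}
Test against $\unlin_n$ in $L^2$, integrate by parts, and estimate the forcing using the pointwise bound on $\ulin_{n-1}$ for the linear-in-$\unlin$ piece, and Gagliardo--Nirenberg interpolation between the $L_x^p$ ($p<3$) control and the enstrophy dissipation $\|\grad\unlin_n\|_{L^2}^2$ for the quadratic piece. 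A Gronwall argument on $I$ then bounds the enstrophy dissipated over $I$ by $A^{O(1)}$ (in scale-invariant units), and Tchebyshev produces a time $t_1\in I$ at which $\|\grad\unlin_n(t_1)\|_{L^2}$ is suitably small. Combining with Sobolev and the pointwise bound on $\ulin_n(t_1)$, one obtains $u(t_1)\in L_x^\infty\cap\dot H_x^1$ with quantitative control by $A^{O(1)}$ at the scale of $|I|$.

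Finally, a standard quantitative $H^1$ local well-posedness result for \eqref{NS} propagates this control from $t_1$ onto a subinterval $I'\subset I$ with $|I'|\gtrsim A^{-O(1)}|I|$, and a bootstrap using iterated parabolic Duhamel for $u$ (and for $\omega$ via its vorticity equation) upgrades to the claimed pointwise bounds on $\grad^j u$ and $\grad^j\omega$ for $j=0,1$. The main obstacle is the energy estimate in the third paragraph: the quadratic forcing $\unlin_{n-1}\otimes\unlin_{n-1}$ must be absorbed into the dissipation using only sub-$L_x^3$ integrability, which crucially exploits the flexibility in $p$ afforded by Proposition~\ref{hierarchy}(3). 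The single-step Leray--Picard splitting used in \cite{tao} does not suffice in the weighted setting precisely because it fails to deliver $L_x^p$ control for $p<3$.
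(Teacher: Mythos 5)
Your high-level strategy matches the paper's: choose $n$ so large that Proposition~\ref{hierarchy}(3) yields unweighted $L_x^2$ control on $\unlin_n$, treat the flat piece $\ulin_n$ as a pointwise-bounded coefficient, and run energy/enstrophy estimates on $\unlin_n$ from its forced equation before bootstrapping. But there is a genuine gap at the energy step. The forcing you write is $\mathbb{P}\div(\unlin_{n-1}\otimes\unlin_{n-1}+2\ulin_{n-1}\odot\unlin_{n-1})$, where the iterate index on the sharp part is $n-1$, while you test against $\unlin_n$. The cubic term $\int\unlin_n\cdot\div(\unlin_{n-1}\otimes\unlin_{n-1})$ is not annihilated by incompressibility (the indices differ), and the Gagliardo--Nirenberg interpolation you invoke cannot close it: after integrating by parts one must bound $\|\grad\unlin_n\|_{L^2}\,\|\unlin_{n-1}\|_{L^4}^2$, but Proposition~\ref{hierarchy}(3) only controls $\unlin_{n-1}$ in $L^p$ for $p<3$, and there is no a priori $\dot H^1$ control on $\unlin_{n-1}$ to interpolate against. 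The argument only closes after the missing algebraic manoeuvre: add and subtract $\unlin_n\otimes\unlin_n$ inside $u\otimes u-\ulin_{n-1}\otimes\ulin_{n-1}$ and expand using $u=\ulin_n+\unlin_n$, so that $\int\unlin_n\cdot\div(\unlin_n\otimes\unlin_n)=0$ by $\div\unlin_n=0$, and what remains, $2\ulin_n\odot\unlin_n+\ulin_n\otimes\ulin_n-\ulin_{n-1}\otimes\ulin_{n-1}$, is at most linear in $\unlin_n$ with $L^\infty\cap L^p$ flat coefficients. That identity, not Gagliardo--Nirenberg, is the crucial ingredient at this stage; Gagliardo--Nirenberg only enters later in the $\frac{d}{dt}E_1$ differential inequality where, after Cauchy--Schwarz against $\|\grad^2\unlin_n\|_{L^2}$, the resulting power is subquadratic and can be absorbed into the dissipation.

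A secondary imprecision: from a good time $t_1$ with $\|\grad\unlin_n(t_1)\|_{L^2}\lesssim A^{O(1)}$ you jump to claiming $u(t_1)\in L_x^\infty\cap\dot H_x^1$; but $\dot H^1\not\hookrightarrow L^\infty$ in $\mathbb R^3$, and $u=\ulin_n+\unlin_n$ is not in $H^1$ because $\ulin_n\notin L^2$. The paper instead propagates $E_1$ on a short subinterval by a continuity argument on the ODE for $E_1$, obtains $\int|\grad^2\unlin_n|^2\lesssim A^{O(1)}$ in time, and only then reaches $L^\infty$ via $\|\unlin_n\|_{L^\infty}\lesssim\|\grad\unlin_n\|_{L^2}^{1/2}\|\grad^2\unlin_n\|_{L^2}^{1/2}$ and iterated Duhamel through $L_t^4L_x^\infty$ and $L_t^8L_x^\infty$. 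Your ``standard quantitative $H^1$ local well-posedness'' should be understood as applying to the equation for $\unlin_n$ with the $\ulin$'s as coefficients, not directly to $u$.
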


\begin{proof}
By shifting time and rescaling, we may assume that $I=[0,1]$ and $[-1,1]\subset[t_0-T,t_0]$.

For $n$ sufficiently large, \eqref{unlinq>3} implies that
\eqn{
\|\unlin_n\|_{L_t^\infty L_x^p([-1/2,1]\times\mathbb R^3)}\lesssim_pA^{O(1)}
}
for all $p\in[\min(q',\frac q2),3)$. By differentiating the definition of $\unlin_n$ in time, we see that it satisfies
\begin{align*}
\dd_t\unlin_n+\mathbb P\div(u\otimes u-\ulin_{n-1}\otimes\ulin_{n-1})-\Delta\unlin_n&=0.
\end{align*}
Thus, defining
\eqn{
E_0(t):=\frac12\int_\Rt|\unlin_n(x,t)|^2dx,\quad E_1(t):=\frac12\int_\Rt|\grad\unlin_n(x,t)|^2dx,
}
we have the equality
\begin{align*}
\frac d{dt}E_0(t)+\int_\Rt\unlin_n\cdot\div(u\otimes u-\ulin_{n-1}\otimes\ulin_{n-1}-\unlin_n\otimes\unlin_n)dx+2E_1(t)=0,
\end{align*}
using the fact that $\div\unlin_n=0$ and therefore $\int\unlin_n\cdot\div\unlin_n\otimes\unlin_n=0$.
Then integrating in time, using \eqref{unlinq>3} with $p=2$,  expanding the product $u\otimes u-\unlin_n\otimes\unlin_n=2\ulin_n\odot\unlin_n+\ulin_n\otimes\ulin_n$, integrating by parts, and applying Young's inequality,
\begin{align*}
\int_{-\frac12}^1E_1(t)dt&=A^{O(1)}+\int_{-\frac12}^1\int_\Rt\Big(\grad\unlin_n:\ulin_n\odot\unlin_n-\frac12\unlin_n\cdot\div(\ulin_n\otimes\ulin_n-\ulin_{n-1}\otimes\ulin_{n-1}\Big)dxdt\\
&\leq A^{O(1)}+\frac12\int_{-\frac12}^1E_1(t)dt+2\int_{-\frac12}^1\int_{\Rt}\bigg(8|\ulin_n\odot\unlin_n|^2\\
&\quad+\frac12|\unlin_n||\!\div(\ulin_n\otimes\ulin_n-\ulin_{n-1}\otimes\ulin_{n-1})|\Big)dxdt.
\end{align*}
Therefore, by Holder's inequality,
\begin{equation}\begin{aligned}\label{integral}
\int_{-\frac12}^1E_1(t)dt&\lesssim A^{O(1)}+\|\ulin_n\|_{L_{t,x}^\infty([-\frac12,1]\times\Rt)}^2\|\unlin_n\|_{L_t^\infty L_x^2([-\frac12,1]\times\Rt)}^2\\
&\quad+\Big(\|\ulin_{n-1}\|_{L_t^\infty L_x^{2p}([-\frac12,1]\times\Rt)}+\|\ulin_n\|_{L_t^\infty L_x^{2p}([-\frac12,1]\times\Rt)}\Big)^2\|\unlin_n\|_{L_t^\infty L_x^{p'}([-\frac12,1]\times\Rt)}\\
&\lesssim A^{O(1)}.
\end{aligned}\end{equation}
The above inequality is consistent with  the hypotheses of \eqref{ulin-bound} and \eqref{unlinq>3} if we take $p=\max(q,\frac q{q-2})$. Plancherel's theorem then implies
\begin{align}\label{H1}
\sum_NN^2\|P_N\unlin_n\|_{L_t^2L_x^2([-\frac12,1]\times\mathbb R^3)}^2\lesssim A^{O(1)}
\end{align}
and then by Sobolev embedding,
\begin{align}\label{unlin-l6-bound}
\|\unlin_n\|_{L_t^2L_x^6([-\frac12,1]\times\mathbb R^3)}\lesssim A^{O(1)}.
\end{align}

Next, using the equation satisfied by $\unlin_n$, integration by parts and the identity $\unlin_{n-1}=\unlin_n+\ulin_n-\ulin_{n-1}$, we have
\eqn{
\frac{d}{dt}E_1&=-\int_\Rt|\grad^2\unlin_n|^2+\int_\Rt\Delta\unlin_n\cdot\div(\unlin_{n-1}\otimes\unlin_{n-1}+2\ulin_{n-1}\odot\unlin_{n-1})\\
&=-\int_\Rt|\grad^2\unlin_n|^2+\int_\Rt\Delta\unlin_n\cdot\div(\unlin_n\otimes\unlin_n+2\ulin_n\odot\unlin_n)\\
&\quad+\int_\Rt\unlin_n\cdot\Delta\div(\ulin_{n-1}+\ulin_n)\odot(\ulin_n-\ulin_{n-1}).
}
Note that for a vector field $u$, $|\grad^2u|^2$ denotes the quantity $\dd_{ij}u_k\dd_{ij}u_k$. By H\"older's inequality, Sobolev embedding, interpolation, and Gagliardo-Nirenberg,
\eqn{
\|\!\div\unlin_n\otimes\unlin_n\|_{L_x^2(\Rt)}\lesssim\|\unlin_n\|_{L_x^6(\Rt)}\|\grad\unlin_n\|_{L_x^3(\Rt)}&\lesssim\|\grad\unlin_n\|_{L_x^2(\Rt)}^{3/2}\|\grad\unlin_n\|_{L_x^6(\Rt)}^{1/2}\\
&\lesssim E_1(t)^{3/4}\|\grad^2\unlin_n\|_{L_x^2(\Rt)}^{1/2}.
}
By H\"older's inequality, \eqref{ulin-bound}, and \eqref{unlinq>3},
\eqn{
\|\!\div\ulin_n\odot\unlin_n\|_{L_x^2(\Rt)}&\lesssim\|\grad\ulin_n\|_{L_x^\infty(\Rt)}\|\unlin_n\|_{L_x^2(\Rt)}+\|\ulin_n\|_{L_x^\infty(\Rt)}\|\grad\unlin_n\|_{L_x^2(\Rt)}\\
&\lesssim A^{O(1)}(1+E_1(t)^{1/2})
}
and
\eqn{
\|\Delta\div(\ulin_{n-1}+\ulin_n)\odot(\ulin_n-\ulin_{n-1})\|_{L_x^p(\Rt)}&\lesssim A^{O(1)}
}
where again $p=\max(q,\frac q{q-2})$. Combining these estimates for the nonlinearity with Young's inequality, H\"older's inequality, and \eqref{unlinq>3},
\begin{equation}\begin{aligned}\label{dEdt}
\frac{d}{dt}E_1&=-\frac12\int_\Rt|\grad^2\unlin_n|^2+O(E_1(t)^{3/2}\|\grad^2\unlin_n\|_{L_x^2(\Rt)}+A^{O(1)}(1+E_1(t)))\\
&\leq-\frac14\int_\Rt|\grad^2\unlin_n|^2+O(E_1(t)^3+A^{O(1)}(1+E_1(t)).
\end{aligned}\end{equation}
From \eqref{integral}, there exists a time $t_1\in[0,\frac12]$ such that $E_1(t_1)\lesssim A^{O(1)}$. A continuity argument then implies that there is an absolute constant $C$ such that within the interval $I_0=[t_1,t_1+A^{-C}]$, we have $E_1(t)\leq A^C$. More generally we define the truncated intervals $I_j=[t_1+\frac j{10}A^{-C},t_1+A^{-C}]$. Along with \eqref{dEdt}, this implies
\eq{\label{gradsquared}
\int_{I_0}\int_\Rt|\grad^2\unlin_n|^2dxdt\lesssim A^{O(1)}.
}
Using this along with the bound on $E_1$ within the Gagliardo-Nirenberg inequality
\eqn{
\|\unlin_n\|_{L_x^\infty(\Rt)}&\lesssim\|\grad\unlin_n\|_{L_x^2(\Rt)}^{1/2}\|\grad^2\unlin_n\|_{L_x^2(\Rt)}^{1/2},
}
then applying H\"older's inequality in time and \eqref{ulin-bound}, yield
\eqn{
\|u\|_{L_t^4L_x^\infty(I_0\times\Rt)}\leq\|\ulin_n\|_{L_t^4L_x^\infty(I_0\times\Rt)}+\|\unlin_n\|_{L_t^4L_x^\infty(I_0\times\Rt)}\lesssim A^{O(1)}.
}
Duhamel's principle on $I_0$, \eqref{heat}, and Young's inequality give
\eqn{
\|u\|_{L_t^8L_x^\infty(I_1\times\Rt)}&\lesssim\|e^{(t-t_1)\Delta}u(t_1)\|_{L_t^8L_x^\infty(I_1\times\Rt)}+\left\|\int_{t_1}^t(t-t')^{-\frac12}\|u(t')\|_{L_x^\infty(\Rt)}^2dt'\right\|_{L_t^8(I_1)}\\
&\lesssim A^{O(1)},
}
where we truncate the time interval to $I_1$ so the heat propagator in the linear term stays away from the initial time. Bootstrapping and truncating the interval one more time in the same manner, we arrive at
\eq{\label{epoch}
\|u\|_{L_{t,x}^\infty(I_2\times\Rt)}&\lesssim A^{O(1)}.
}
We also have, by \eqref{gradsquared} and Sobolev embedding,
\eq{\label{l2time}
\|\grad\unlin_n\|_{L_t^2L_x^6(I_0\times\Rt)}&\lesssim\|\grad^2\unlin_n\|_{L_t^2L_x^2(I_0\times\Rt)}\lesssim A^{O(1)}
}
which we apply to the Duhamel formula
\eqn{
\grad u(t)=e^{(t-t_1-\frac15A^{-C})\Delta}\grad u(t_1+\frac15A^{-C})-\int_{t_1+\frac15A^{-C}}^te^{-(t-t')\Delta}\grad\mathbb P\div u\otimes u(t')dt'
}
for $t\in I_2$. Using \eqref{heat}, \eqref{epoch}, and \eqref{ulin-bound} and assuming $t\in I_3$,
\eqn{
\|\grad u(t)\|_{L_x^\infty}&\lesssim A^{O(1)}+\int_{t_1+\frac15A^{-C}}^t(t-t')^{-3/4}\|u\cdot\grad\unlin_n(t')\|_{L_x^6}\\
&\quad+(t-t')^{-1/2}\|u\cdot\grad\ulin_n(t')\|_{L_x^\infty}dt'\\
&\lesssim A^{O(1)}\left(1+\int_{t_1}^t(t-t')^{-3/4}\|\grad\unlin_n(t')\|_{L_x^6}dt'\right)
}
and therefore
\eqn{
\|\grad u\|_{L_t^4L_x^\infty(I_3\times\Rt)}&\lesssim A^{O(1)}
}
by fractional integration and \eqref{l2time}. Finally, for $t\in I_4$, by this and \eqref{epoch},
\eqn{
\|\grad u(t)\|_{L_x^\infty(\Rt)}&\lesssim A^{O(1)}+\int_{t_1+\frac3{10}A^{-C}}^t(t-t')^{-1/2}\|u\|_{L_{t,x}^\infty(I_3\times\Rt)}\|\grad u(t')\|_{L_x^\infty(\Rt)}dt'
}
and so
\eq{\label{gradepoch}
\|\grad u\|_{L_{t,x}^\infty(I_4\times\Rt)}&\lesssim A^{O(1)}
}
again by Young's inequality.

The estimates \eqref{epoch} and \eqref{gradepoch} imply regularity of the coefficients of the vorticity equation in $I_4\times\Rt$ and therefore the estimates for $\omega$ and $\grad\omega$ follow by \eqref{vorticity} and parabolic regularity.
\end{proof}

\subsection{Back propagation}

Before we can prove the back propagation we need the following preliminary estimates.
\begin{lemma}\label{pointwiselemma}
We have the pointwise bounds
\eq{\label{pointwise}
|P_Nu|\lesssim AN,\quad|\grad P_Nu|\lesssim AN^2,\quad|\dd_tP_Nu|\lesssim A^2N^3
}
and
\eq{
|P_N\omega|\lesssim AN^2,\quad|\grad P_N\omega|\lesssim AN^3,\quad|\dd_tP_N\omega|\lesssim A^2N^4.
}
\end{lemma}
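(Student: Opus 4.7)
The plan is to obtain all six pointwise bounds as direct applications of the weighted Bernstein inequality (Proposition \ref{bernsteinprop}) with $p=\infty$ and $\alpha=0$, together with the Navier--Stokes equation \eqref{NS} for the time derivative bounds and the identity $\omega=\curl u$ for the vorticity bounds.

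\emph{Velocity bounds.} For $|P_Nu|$, I would apply Proposition \ref{bernsteinprop} with $p=\infty$, $\alpha=0$, $\beta=\alpha_q$, and integrability $q$ from \eqref{critical}. In the non-axisymmetric case $q>3$ one has $\alpha_q>0$, so the condition $\alpha\leq\beta$ is immediate; in the axisymmetric case $q\in(2,3]$ the hypotheses $\beta\geq-1/q$ and $\alpha\leq\beta+1/q-1/p$ both reduce to $q\geq2$. This gives $\|P_Nu\|_{L^\infty}\lesssim N^{3/q+\alpha_q}\|r^{\alpha_q}u\|_{L^q}=AN$. For $|\grad P_Nu|$, the same proposition applied with symbol $i\xi\varphi(\xi/N)$ (so that $M\sim N$) gives $|\grad P_Nu|\lesssim AN^2$.

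\emph{Time derivative via the equation.} Writing $\dd_tP_Nu=\Delta P_Nu-\mathbb P P_N\div(u\otimes u)$ from \eqref{NS}, the heat term is $O(AN^3)$ by Bernstein with $M\sim N^2$. For the nonlinear term I would apply Proposition \ref{bernsteinprop} to $u\otimes u$ with $q$ replaced by $q/2$, $\beta=2\alpha_q$, and symbol $(\Id-\xi\otimes\xi/|\xi|^2)\cdot i\xi\cdot\varphi(\xi/N)$ of size $M\sim N$. The key hypotheses hold: $\beta=2-6/q<2-4/q=2/(q/2)'$, the condition $\alpha\leq\beta$ holds when $q>3$, and the axisymmetric relaxation $\alpha\leq\beta+2/q$ reduces to $q>2$; moreover $|u\otimes u|=|u|^2$ is axisymmetric (or comparable to axisymmetric, cf.\ Remark \ref{remark}) whenever $|u|$ is, so the axisymmetric improvement is available when needed. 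This yields $|\mathbb P P_N\div(u\otimes u)|\lesssim N\cdot N^{6/q+2\alpha_q}A^2=A^2N^3$, and hence $|\dd_tP_Nu|\lesssim A^2N^3$.

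\emph{Vorticity bounds.} Since $P_N\omega=\curl P_Nu$, the first two vorticity estimates follow from the velocity ones with one additional factor of $N$ supplied by Bernstein applied to the $\curl$ multiplier: $|P_N\omega|\lesssim AN^2$ and $|\grad P_N\omega|\lesssim AN^3$. For the time derivative, I would write
\eqn{
\dd_tP_N\omega=\curl\dd_tP_Nu=\Delta P_N\omega-\curl P_N\div(u\otimes u)
}
using $\curl\mathbb P=\curl$; the first term is $O(AN^4)$, and the nonlinear contribution, estimated via Proposition \ref{bernsteinprop} with $M\sim N^2$ on $u\otimes u$, contributes $O(A^2N^4)$.

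I do not foresee any substantive obstacle; the work is essentially bookkeeping. The one point that deserves care is verifying the endpoint hypotheses of Proposition \ref{bernsteinprop} at $p=\infty$, $\alpha=0$ uniformly across the admissible range of $q$ (especially the strict inequality $\beta<2/q'$, which is tight for $u\otimes u$ when $q$ is close to $2$), and confirming that axisymmetry of $|u|$ descends to $|u\otimes u|$ so that the improved axisymmetric version of Bernstein applies to the nonlinearity in the regime $q\leq3$.
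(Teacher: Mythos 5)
Your proposal is correct and follows essentially the same route as the paper: weighted Bernstein (Proposition \ref{bernsteinprop}) at the $p=\infty$, $\alpha=0$ endpoint with $\beta=\alpha_q$ (resp.\ $\beta=2\alpha_q$ for the bilinear term), the equation \eqref{NS} to handle $\dd_tP_Nu$, and an extra factor of the $\curl$ multiplier for the vorticity bounds. The hypothesis checking you carry out matches the paper's remark that the weights are admissible because $\alpha_q\geq0$ when $q\geq3$ and $\alpha_q>-1/q$ when $2<q<3$.
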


\begin{proof}
By \eqref{bernstein},
\eqn{
|\grad^jP_Nu|\lesssim N^{j+3/q+\alpha_q}\|r^{\alpha_q}u\|_{L^q}\lesssim AN^{1+j}.
}
Applying $P_N$ to \eqref{NS} and again using \eqref{bernstein} and H\"older's inequality,
\eqn{
|\dd_tP_Nu|&\leq\|P_N\mathbb P\div(u\otimes u)\|_{L_x^\infty}+\|P_N\Delta u\|_{L_x^\infty}\\
&\lesssim N^{1+\frac6q+2\alpha_q}\|r^{2\alpha_q}u\otimes u\|_{L_x^{q/2}}+N^{2+\frac3q+\alpha_q}\|r^{\alpha_q}u\|_{L_x^q}\\
&\lesssim A^2N^3.
}
The vorticity estimates are proved in the same way, with an extra $\xi\times$ in the multiplier. Note that the weights satisfy the Bernstein inequality when $q\geq3$ because then $\alpha_q\geq0$, and when $2<q<3$ because then $\alpha_q>-\frac1q$. 
\end{proof}

The ``bounded total speed property'' (see \cite{othertao}) is essential for iterating the back propagation. Proposition \ref{tbsprop} is an extension of the version that appears in \cite{tao}.

\begin{prop}\label{tbsprop}
Let $u$ be as in Proposition \ref{epochs}. For any time interval $I\subset[t_0-T/2,t_0]$, we have
\eq{\label{tbs}
\|u\|_{L_t^1L_x^\infty(I\times\Rt)}&\lesssim A^{O(1)}|I|^{1/2}.
}
\end{prop}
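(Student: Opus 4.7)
The plan is to decompose $u = \ulin_n + \unlin_n$ via Proposition \ref{hierarchy} for $n$ sufficiently large and estimate each piece in $L_t^1 L_x^\infty(I)$ separately. For $\ulin_n$, applying \eqref{ulin-bound} with $p = \infty$, $\alpha = 0$, $j=0$ gives $\|\ulin_n\|_{L_t^\infty L_x^\infty([-T/2,0])} \lesssim T^{-1/2} A^{O(1)}$, so
\eqn{
\|\ulin_n\|_{L_t^1 L_x^\infty(I)} \leq |I| \cdot T^{-1/2} A^{O(1)} \leq A^{O(1)} |I|^{1/2},
}
using $|I| \leq T/2$.

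For the contribution of $\unlin_n$ (equivalently, of $u$ on the regions where the pointwise $\ulin_n$ bound is not enough), I iterate Proposition \ref{epochs}. Define $N(T') := \sup\{\|u\|_{L_t^1 L_x^\infty(J)} : J \subset [t_0 - T/2, t_0], \,|J| \leq T'\}$ and prove inductively that $N(T') \lesssim A^{O(1)} (T')^{1/2}$. On a sub-interval $J$ of length $T'$, the epoch $J_* \subset J$ from Proposition \ref{epochs} satisfies $|J_*| \gtrsim A^{-O(1)} T'$ and $\|u\|_{L_{t,x}^\infty(J_*)} \lesssim A^{O(1)} (T')^{-1/2}$, giving a contribution $\|u\|_{L_t^1 L_x^\infty(J_*)} \leq |J_*| \cdot A^{O(1)} (T')^{-1/2} \lesssim A^{O(1)} (T')^{1/2}$. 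The complement $J \setminus J_*$ must then be handled by recursion.

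The principal obstacle is convergence of the iteration. If $J \setminus J_*$ consists generically of two sub-intervals, the number of intervals at step $k$ grows like $2^k$ while the total length shrinks by $(1-cA^{-O(1)})^k$, and the naive Cauchy--Schwarz bound $\sum_k \sqrt{N_k T_k}$ on the total contribution diverges since the factor $2^k$ overwhelms the shrinking. To resolve this, I exploit the flexibility in choosing the initial time $t_1$ in the proof of Proposition \ref{epochs}: the global energy estimate $\int E_1(t)\,dt \lesssim A^{O(1)}$ implies, via Markov's inequality, that the bad set $\{t : E_1(t) > A^C\}$ has measure $\lesssim A^{O(1)-C}$, so for $C$ large one can select $t_1$ within $A^{-K} T'$ of the left endpoint of $J$ (accepting a polynomially larger $E_1(t_1)$ bound, which only shifts the epoch length by another polynomial factor in $A$). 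The complement $J \setminus J_*$ then consists of a single main interval of length at most $(1 - cA^{-O(1)})T'$ together with a tiny residual of length at most $A^{-K} T'$ near the endpoint. Choosing $K$ large enough, the inductive recursion
\eqn{
N(T') \leq A^{O(1)} (T')^{1/2} + N\big((1 - cA^{-O(1)}) T'\big) + N(A^{-K} T')
}
closes as a convergent geometric series, yielding $N(T') \lesssim A^{O(1)} (T')^{1/2}$ and hence \eqref{tbs}.
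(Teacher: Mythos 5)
The treatment of $\ulin_n$ is fine. However, the iterative epoch argument for the rest has a genuine gap, and the proposed fix does not work.

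Recall how the epoch in Proposition~\ref{epochs} is constructed: one chooses $t_1$ with $E_1(t_1)\lesssim A^{O(1)}$, a continuity argument keeps $E_1(t)\leq A^C$ on $I_0=[t_1,t_1+A^{-C}]$, and then the Duhamel bootstrap is run on the \emph{truncated} intervals $I_j=[t_1+\tfrac j{10}A^{-C},t_1+A^{-C}]$; the final epoch is $I_4$. Each truncation is essential because the heat propagator $e^{(t-t_1)\Delta}$ must stay quantitatively away from the initial time to pass from $L_t^4L_x^\infty$ to $L_t^8L_x^\infty$, etc. Consequently the epoch starts no earlier than $t_1+\tfrac4{10}A^{-C}$, and no matter how close you push $t_1$ to the left endpoint of $J$, the left residual $J\setminus J_*$ still has length at least on the order of $A^{-C}|J|$; it is never of size $A^{-K}|J|$ for $K$ arbitrary. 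So your claim that the residual is ``tiny,'' of size $A^{-K}T'$, is false.

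This matters because the recursion
\begin{align*}
N(T')\ \leq\ A^{O(1)}(T')^{1/2}+N(aT')+N(bT'),\qquad a+b=1-\gamma,
\end{align*}
with $\gamma\sim A^{-C}$ the relative epoch length, can close with an ansatz $N(T')=K(T')^{1/2}$ only if $a^{1/2}+b^{1/2}<1$. Since $\sqrt a+\sqrt b\geq\sqrt{a+b}=\sqrt{1-\gamma}$ with equality only when $ab=0$, and in fact $a^{1/2}+b^{1/2}<1$ forces $ab<\gamma^2/4$, one needs the smaller piece to have relative length $\lesssim\gamma^2\sim A^{-2C}$. But as explained above the left piece always has relative length $\gtrsim\gamma\sim A^{-C}$, so $a^{1/2}+b^{1/2}\gtrsim\sqrt{1-2\gamma}+\sqrt{c\gamma}>1$ for small $\gamma$, and the recursion diverges rather than closes. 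The concavity of the square root is not a nuisance you can outrun by epoch placement; there is an irremovable $\Theta(A^{-O(1)}|J|)$ gap on each side of the epoch.

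The paper's proof avoids this entirely. It never iterates Proposition~\ref{epochs}; instead it writes the Duhamel formula for $P_N\unlin_n$, applies a paraproduct decomposition to $u\otimes u-\ulin_{n-1}\otimes\ulin_{n-1}$, estimates the pieces using \eqref{bernstein}, \eqref{ulin-bound}, \eqref{unlin-l6-bound}, and crucially the square-summable frequency-localized $\dot H^1$ bound \eqref{H1} arising from the \emph{single} energy inequality \eqref{integral} (no pigeonholing in time), and then sums a convergent series in $N$. This is a bilinear, frequency-space argument rather than a physical-space partition of the time interval, and it is precisely what makes the $L_t^1L_x^\infty$ norm controllable with only a square root in $|I|$ and a polynomial in $A$.
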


\begin{proof}
As in the proof of Proposition \ref{epochs}, assume without loss of generality that $I=[0,1]$. Once again we let $n$ be sufficiently large so that
\eqn{
\|\unlin_n\|_{L_t^\infty L_x^p([-1/2,1]\times\mathbb R^3)}&\lesssim_pA^{O(1)}
}
for all $p\in[q',3)$.

From the equation for $\unlin$ we have
\begin{align*}
P_N\unlin_n(t)=-\int_{-1/2}^tP_Ne^{(t-t')\Delta}\mathbb P\div\tilde P_N(u\otimes u-\ulin_{n-1}\otimes\ulin_{n-1})(t')dt'
\end{align*}
and so
\begin{align*}
&\|P_N\unlin_n\|_{L_t^1L_x^\infty([-1/2,1]\times\Rt)}\\
&\quad\lesssim\left\|\int_{-1/2}^tNe^{-N^2(t-t')/20}\|\tilde P_N(u\otimes u-\ulin_{n-1}\otimes\ulin_{n-1})(t')\|_{L_x^\infty(\Rt)}dt'\right\|_{L_t^1([-1/2,1])}\\
&\quad\lesssim N^{-1}\|\tilde P_N(u\otimes u-\ulin_{n-1}\otimes\ulin_{n-1})\|_{L_t^1L_x^\infty([-1/2,1]\times\Rt)}.
\end{align*}
We split $u\otimes u=\unlin_n\otimes\unlin_n+2\ulin_n\odot\unlin_n+\ulin_n\otimes\ulin_n$ and estimate, by \eqref{bernstein} and \eqref{ulin-bound},
\begin{align*}
\|\tilde P_N(\ulin_n\otimes\ulin_n)\|_{L_t^1L_x^\infty([-1/2,1]\times\Rt)}\lesssim\|\ulin_n\|_{L_{t,x}^\infty([-1/2,1]\times\Rt)}^2&\lesssim A^{O(1)}
\end{align*}
and similarly for $\ulin_{n-1}\otimes\ulin_{n-1}$. By \eqref{bernstein}, H\"older's inequality in time, and \eqref{unlin-l6-bound},
\begin{align*}
\|\tilde P_N(\ulin_n\odot\unlin_n)\|_{L_t^1L_x^\infty([-1/2,1]\times\Rt)}&\lesssim N^{1/2}\|\tilde P_N(\ulin_n\odot\unlin_n)\|_{L_t^1L_x^6([-1/2,1]\times\Rt)}\\
&\lesssim N^{1/2}\|\ulin_n\|_{L_{t,x}^\infty([-1/2,1]\times\Rt)}\|\unlin_n\|_{L_t^2L_x^6([-1/2,1]\times\Rt)}\\
&\lesssim A^{O(1)}N^{1/2}.
\end{align*}
Finally, we decompose $\unlin_n=P_{\leq N}\unlin_n+P_{>N}\unlin_n$ and estimate the three terms that appear when $\unlin_n\otimes\unlin_n$ is expanded. By \eqref{bernstein} and H\"older's inequality,
\begin{align*}
\|\tilde P_N(P_{\leq N}\unlin_n\otimes P_{\leq N}\unlin_n)\|_{L_t^1L_x^\infty([-1/2,1]\times\Rt)}&\lesssim\|P_{\leq N}\unlin_n\|_{L_t^2L_x^\infty([-1/2,1]\times\Rt)}^2,\\
\|\tilde P_N(P_{\leq N}\unlin_n\odot P_{>N}\unlin_n)\|_{L_t^1L_x^\infty([-1/2,1]\times\Rt)}&\lesssim N^{3/2}\|P_{\leq N}\unlin_n\odot P_{>N}\unlin_n\|_{L_t^1L_x^2([-1/2,1]\times\Rt)}\\
&\lesssim N^{3/2}\|P_{\leq N}\unlin_n\|_{L_t^2L_x^\infty([-1/2,1]\times\Rt)}\\
&\quad\times\|P_{>N}\unlin_n\|_{L_t^2L_x^2([-1/2,1]\times\Rt)},\\
\|\tilde P_N(P_{>N}\unlin_n\otimes P_{>N}\unlin_n)\|_{L_t^1L_x^\infty([-1/2,1]\times\Rt)}&\lesssim N^3\|P_{>N}\unlin_n\otimes P_{>N}\unlin_n\|_{L_t^1L_x^1([-1/2,1]\times\Rt)}\\
&\lesssim N^3\|P_{>N}\unlin_n\|_{L_t^2L_x^2([-1/2,1]\times\Rt)}^2.
\end{align*}
In total, by Young's inequality,
\begin{align*}
\|\tilde P_N(\unlin_n\otimes\unlin_n)\|_{L_t^1L_x^\infty([-1/2,1]\times\Rt)}&\lesssim\|P_{\leq N}\unlin_n\|_{L_t^2L_x^\infty([-1/2,1]\times\Rt)}^2\\
&\quad+N^3\|P_{>N}\unlin_n\|_{L_t^2L_x^2([-1/2,1]\times\Rt)}^2.
\end{align*}
Inserting this into the estimate for $\unlin_n$,
\begin{equation}\begin{aligned}\label{guy}
\|P_N\unlin_n\|_{L_t^1L_x^\infty([-1/2,1]\times\Rt)}&\lesssim N^{-1}\|P_{\leq N}\unlin_n\|_{L_t^2L_x^\infty([-1/2,1]\times\Rt)}^2\\
&\quad+N^2\|P_{>N}\unlin_n\|_{L_t^2L_x^2([-1/2,1]\times\Rt)}^2+A^{O(1)}(N^{-1}+N^{-1/2}).
\end{aligned}\end{equation}
By \eqref{bernstein} and Cauchy-Schwarz,
\begin{align*}
\|P_{\leq N}\unlin_n\|_{L_t^2L_x^\infty([-1/2,1]\times\Rt)}^2&\lesssim\left(\sum_{N'\leq N}(N')^{3/2}\|P_{N'}\unlin_n\|_{L_t^2L_x^2([-1/2,1]\times\Rt)}\right)^2\\
&\lesssim N^{1/2}\sum_{N'\leq N}(N')^{5/2}\|P_{N'}\unlin_n\|_{L_t^2L_x^2([-1/2,1]\times\Rt)}^2
\end{align*}
and by Plancherel's theorem,
\begin{align*}
\|P_{>N}\unlin_n\|_{L_t^2L_x^2([-1/2,1]\times\Rt)}^2\lesssim\sum_{N'>N}\|P_{N'}\unlin_n\|_{L_t^2L_x^2([-1/2,1]\times\Rt)}^2.
\end{align*}
Plugging these into \eqref{guy}, we obtain an estimate for the high frequency component,
\begin{align*}
\|P_{\geq1}\unlin_n\|_{L_t^1L_x^\infty([-1/2,1]\times\Rt)}&\lesssim \sum_{N\geq1}\Bigg(N^{-1/2}\sum_{N'\leq N}(N')^{5/2}\|P_{N'}\unlin_n\|_{L_t^2L_x^2([-1/2,1]\times\Rt)}^2\\
&\quad+N^2\sum_{N'>N}\|P_{N'}\unlin_n\|_{L_t^2L_x^2([-1/2,1]\times\Rt)}^2\Bigg)+A^{O(1)}\\
&\lesssim \sum_NN^2\|P_N\unlin_n\|_{L_t^2L_x^2([-1/2,1]\times\Rt)}^2+A^{O(1)}\\
&\lesssim A^{O(1)}.
\end{align*}
by \eqref{H1}. For the remaining parts of $u$, by H\"older's inequality in time, \eqref{bernstein}, \eqref{ulin-bound}, and \eqref{unlin},
\begin{align*}
\|\ulin_n\|_{L_t^1L_x^\infty([-1/2,1]\times\Rt)}&\lesssim\|\ulin_n\|_{L_{t,x}^\infty([-1/2,1]\times\Rt)}\lesssim A^{O(1)}
\end{align*}
and
\begin{align*}
\|P_{<1}\unlin_n\|_{L_t^1L_x^\infty([-1/2,1]\times\Rt)}\lesssim\|\unlin_n\|_{L_t^\infty L_x^2([-1/2,1]\times\Rt)}\lesssim A^{O(1)}
\end{align*}
which completes the proof.
\end{proof}

Now we can prove the back propagation proposition from \cite{tao} with the more general critical control on $u$.
\begin{prop}\label{backpropagationprop}
Let $u$ be as in Proposition \ref{epochs}. Suppose there exist $(t_1,x_1)\in[t_0-\frac T2,t_0]\times\Rt$ and $N_1\geq A_3T^{-\frac12}$ such that
\eq{\label{lb}
|P_{N_1}u(t_1,x_1)|\geq A_1^{-1}N_1.
}
Then there exists $(t_2,x_2)\in[t_0-T,t_1]\times\Rt$ and $N_2\in[A_2^{-1}N_1,A_2N_1]$ such that
\eqn{
A_3^{-1}N_1^{-2}\leq t_1-t_2\leq A_3N_1^{-2},
}
\eqn{
|x_2-x_1|\leq A_4N_1^{-1},
}
and
\eqn{
|P_{N_2}u(t_2,x_2)|\geq A_1^{-1}N_2.
}
\end{prop}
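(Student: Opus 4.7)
The plan is to argue by contradiction, following the scheme of Tao and drawing on the tools assembled in the previous subsections. Assume that no triple $(t_2, x_2, N_2)$ satisfying the stated temporal, spatial, and frequency constraints produces a concentration $|P_{N_2} u(t_2, x_2)| \geq A_1^{-1} N_2$. I fix $t_2 = t_1 - A_3 N_1^{-2}$, which lies in $[t_0 - T, t_1]$ since $N_1 \geq A_3 T^{-1/2}$. Then Duhamel's formula applied to \eqref{NS} on $[t_2, t_1]$ gives
\begin{equation*}
P_{N_1} u(t_1, x_1) = \bigl(e^{(t_1 - t_2)\Delta} P_{N_1} u(t_2)\bigr)(x_1) - \int_{t_2}^{t_1} \bigl(e^{(t_1 - s)\Delta} P_{N_1} \mathbb P \div (u \otimes u)(s)\bigr)(x_1)\, ds,
\end{equation*}
and the aim is to show the right-hand side has magnitude strictly less than $A_1^{-1} N_1$, contradicting \eqref{lb}. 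The linear term is controlled by the heat decay $e^{-c A_3}$ coming from $P_{N_1}$ evolving for time $A_3 N_1^{-2}$, combined with the uniform pointwise bound $|P_{N_1} u| \lesssim A N_1$ from Lemma \ref{pointwiselemma}; the resulting bound $e^{-cA_3} A N_1$ is $\ll A_1^{-1} N_1$ by the hierarchy $A_j = A^{C_0^j}$ with $C_0$ large. The nonlinear integral must therefore carry at least $\tfrac12 A_1^{-1} N_1$ at $(t_1, x_1)$.

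For the nonlinear term I perform a paraproduct decomposition $u = u_{\mathrm{lo}} + u_{\mathrm{med}} + u_{\mathrm{hi}}$ with $u_{\mathrm{med}} = P_{[A_2^{-1} N_1, A_2 N_1]} u$, $u_{\mathrm{lo}} = P_{< A_2^{-1} N_1} u$, and $u_{\mathrm{hi}} = P_{> A_2 N_1} u$. After the outer $P_{N_1}$ the piece $u_{\mathrm{lo}} \otimes u_{\mathrm{lo}}$ vanishes by frequency support. The high-high-to-low cascade $P_{N_1}(u_{\mathrm{hi}} \otimes u_{\mathrm{hi}})$ is handled by splitting $u = \ulin_n + \unlin_n$: the super-polynomial decay \eqref{ulinPNinfty} disposes of the $\ulin_n$ contributions, while the $L_t^2 \dot H_x^1$ control \eqref{H1} on $\unlin_n$, together with Bernstein and dyadic summation over $N > A_2 N_1$, handles the rest. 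The remaining interactions involve at least one factor of $u_{\mathrm{med}}$; I split the spatial integration into the ball $|y - x_1| \leq A_4 N_1^{-1}$, where the contradiction hypothesis supplies $|u_{\mathrm{med}}(s, y)| \leq A_1^{-1} A_2 N_1$, and its complement, where the rapid decay of the kernel of $e^{(t_1 - s)\Delta} P_{N_1} \mathbb P \div$ at scales $\gg N_1^{-1}$ renders the contribution negligible provided $A_4 \gg A_3^{1/2}$. The low-frequency partner is estimated in $L_t^1 L_x^\infty$ via the bounded total speed property \eqref{tbs}, yielding a contribution that is $\ll A_1^{-1} N_1$ so long as $A_1, A_2$ are chosen much smaller than $A_3, A_4$ in the hierarchy.

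The principal technical obstacle is calibrating the constants $A_1, A_2, A_3, A_4$ so that every contribution is strictly controlled: the contradiction hypothesis provides a factor of $A_1^{-1}$ per medium-frequency factor but comes with a factor of $A_2$ from the width of the frequency range; the far-field kernel decay demands $A_4$ large enough relative to $A_3$; and the high-frequency cascade requires $A_2$ sufficiently large for the dyadic sums to converge. The ample separation provided by $A_j = A^{C_0^j}$ with $C_0$ a sufficiently large absolute constant is exactly what reconciles these competing demands. A secondary subtlety is that the very high frequencies $N > A_2 N_1$ fall outside the range covered by the contradiction hypothesis, but there the combination of \eqref{H1} and \eqref{ulinPNinfty} via the $\ulin_n/\unlin_n$ decomposition supplies adequate geometric-series bounds, which is precisely the purpose of the hierarchy from Proposition \ref{hierarchy}.
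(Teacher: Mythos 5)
Your skeleton---argue by contradiction, apply Duhamel on $[t_2,t_1]$, show the linear term is tiny by heat decay and the nonlinear paraproduct terms do not reconstitute the concentration at $(t_1,x_1)$---is the same as in the paper. But the technical heart of the paper's proof, which you omit, is an \emph{iterated} localized Duhamel / weighted Bernstein argument (the inductive estimate \eqref{one}) that lowers the integrability of $P_N u$ step by step from $L_x^q$ down to a \emph{local} $L_t^\infty L_x^2$ bound near $x_1$, namely $\|P_N u\|_{\X20{A_3/2}(B(x_1,A_4/2))}\lesssim N^{-1/2}A^{O(1)}$ (and a further bootstrap to extract factors of $A_1^{-1/2}$). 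That local $N^{-1/2}$-decaying $L_t^\infty L_x^2$ bound is precisely what is needed to sum the high-high cascade $\sum_{N'\sim N''\gtrsim A_2 N_1}P_{N'}u\otimes P_{N''}u$ inside the Duhamel integral.

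Your proposed substitute for this step---using the $\ulin_n/\unlin_n$ decomposition together with \eqref{ulinPNinfty} and the energy bound \eqref{H1}---does not close. The reason is a scale mismatch: the Duhamel integral on $[t_2,t_1]$ has length $\sim A_3 N_1^{-2}$, whereas \eqref{H1} is an $L_t^2\dot H_x^1$ global spacetime bound whose natural scale is the full interval $I$. If one rescales so that $N_1=1$, the effective interval has length $\sim A_3$ and $\int_I E_1\,dt$ scales like $A_3^{1/2}A^{O(1)}$, so $\|P_N\unlin_n\|_{L_t^2L_x^2}\lesssim N^{-1}A_3^{1/4}A^{O(1)}$. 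Plugging this into the high-high contribution yields something of order $A_2^{-2}A_3^{1/2}A^{O(1)}N_1$, which exceeds $A_1^{-1}N_1$ once $C_0$ is large (since $A_3^{1/2}\gg A_1 A_2^2$). Because \eqref{H1} is only $L_t^2$, you cannot exploit the exponential time decay of $e^{(t_1-s)\Delta}$ to restrict the effective integration to a window of length $N_1^{-2}$; the paper's local $L_t^\infty L_x^2$ bound is exactly what makes that possible, and there is no shortcut to it through the global energy estimate. Relatedly, invoking the bounded total speed property \eqref{tbs} for the low-frequency partner is both misplaced and lossy: \eqref{tbs} gives $\|u\|_{L_t^1 L_x^\infty}\lesssim A^{O(1)}|I|^{1/2}$ which over $[t_2,t_1]$ is $\sim A^{O(1)}A_3^{1/2}N_1^{-1}$, worse than the trivial sum of the pointwise bounds from Lemma \ref{pointwiselemma}; \eqref{tbs} is used in iterating this proposition (Proposition \ref{iteratedbp}), not within its proof. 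You also do not address the extension of the contradiction hypothesis from times $[t_1-A_3N_1^{-2},t_1-A_3^{-1}N_1^{-2}]$ up to $t_1$ (which the paper handles via the $\dd_t P_N u$ bound and the fundamental theorem of calculus), nor the separate argument required in the axisymmetric $2<q\leq3$ regime where $\alpha_q<0$ and the weighted Bernstein constraints change.
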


\begin{proof}
First consider $q>3$. We scale and translate so that $N_1=1$ and $t_1=0$. Then $[-2A_3,0]\subset[t_0-T,t_0]$. Then by assumption we have
\eq{\label{lowerbound}
|P_1u(0,x_1)|\geq A_1^{-1}.
}
Assume for contradiction that the claim fails, which would imply
\eqn{
\|P_Nu\|_{L_{t,x}^\infty([-A_3,-A_3^{-1}]\times B(x_1,A_4))}\leq A_1^{-1}N
}
for $N\in[A_2^{-1},A_2]$. From the pointwise bound on $\dd_tP_Nu$ and the fundamental theorem of calculus, the time interval can be enlarged up to $t=0$,
\eq{\label{linfty}
\|P_Nu\|_{\X\infty0{A_3}(B(x_1,A_4))}\lesssim A_1^{-1}N+A_3^{-1}A^2N^3\lesssim A_1^{-1}N.
}
For $t\in[-A_3,0]$, Duhamel's formula, H\"older's inequality for the linear term, and \eqref{bernstein} give us
\eqn{
\|r^\alpha P_Nu(t)\|_{L_x^{q/2}(B(x_1,A_4))}&\leq A_4^{3/q}\|r^\alpha e^{(t+2A_3)\Delta}P_Nu(-2A_3)\|_{L_x^q(B(x_1,A_4))}\\
&\quad+\int_{-2A_3}^t\|r^\alpha e^{(t-t')\Delta}P_N\div u\otimes u(t')\|_{L_x^{q/2}(\mathbb R^3)}dt'\\
&\lesssim A_4^{3/q}e^{-N^2A_3/20}N^{\alpha_q-\alpha}A+\int_{-2A_3}^te^{-(t-t')N^2/20}N^{1+2\alpha_q-\alpha}A^2dt'
}
assuming $-\frac2q<\alpha\leq\alpha_q$. Therefore, for $N\geq A_2^{-1}$,
\eq{
\|P_Nu\|_{\X{q/2}\alpha{A_3}(B(x_1,A_4))}\lesssim A^2N^{1-\frac6q-\alpha}.
}
Starting from this base case, we claim inductively that
\eq{\label{one}
\|P_Nu\|_{\X{q/n}\alpha{T_n}(B_n)}&\lesssim N^{1-\frac{3n}q-\alpha}A^{O_n(1)}
}
for all $N\geq A_2^{-\frac12-\frac1n}$, where $T_n=(\frac12+\frac1n)A_3$ and $B_n=B(x_1,(\frac12+\frac1n)A_4)$, if $2\leq n\leq\min(q,\frac{q+5}2-)$ and $-\frac2q<\alpha\leq\min(\alpha_q,2-\frac{2n}q-)$. Suppose \eqref{one} holds for some $n-1\geq2$. For $t\in[-T_n,0]$,
\eqn{
\|r^\alpha P_Nu(t)\|_{L_x^{q/n}(B_n)}&\leq\|r^\alpha e^{(t+T_{n-1})\Delta}P_Nu(-T_{n-1})\|_{L_x^{q/n}(B_n)}\\
&\quad+\int_{-T_{n-1}}^t\|r^\alpha e^{(t-t')\Delta}P_N\div\tilde P_N(u\otimes u)(t')\|_{L_x^{q/n}(B_n)}dt'.
}
The linear term can be handled exactly as in the previous case, using H\"older and \eqref{bernstein}. Then by \eqref{localbernstein} and a paraproduct decomposition of the nonlinearity,
\eqn{
\|r^\alpha e^{(t-t')\Delta}P_N\div(u\otimes u)\|_{L_x^{q/n}(B_n)}&\lesssim_ne^{-(t-t')N^2/20}N^{1-\alpha}\\
&\hspace{-1.6in}\times\bigg(N^{\alpha_q+\beta}\|r^{\alpha_q+\beta}(P_{>N/100}u\otimes u+P_{\leq N/100}u\otimes P_{>N/100}u)\|_{L_x^{q/n}(B_{n-1})}\\
&\quad\hspace{-1.6in}+(NA_4)^{-50n}A_4^{\frac3q(n-2)}N^{2\alpha_q}\|r^{2\alpha_q}u\otimes u\|_{L_x^{q/2}(\mathbb R^3)}\bigg),
}
assuming additionally that $\alpha-\alpha_q\leq\beta<1-\frac{2n-3}q$. This implies
\eqn{
\|P_Nu\|_{\X{q/n}\alpha{T_n}(B_n)}&\lesssim_n(NA_4)^{-10}+N^{-\frac3q+\beta-\alpha}\bigg(\|(P_{>N/100}u)\otimes u\|_{\X{q/n}{\alpha_q+\beta}{T_{n-1}}(B_{n-1})}\\
&\quad+\|(P_{\leq N/100}u)\otimes(P_{>N/100}u)\|_{\X{q/n}{\alpha_q+\beta}{T_{n-1}}(B_{n-1})}\bigg)\\
&\quad+(NA_4)^{-50n}A_4^{\frac3q(n-2)}N^{1-\frac6q-\alpha}A^2.
}
For the first nonlinear term, for $N\geq A_2^{-\frac12-\frac1n}$,
\eqn{
\|P_{>N/100}u\otimes u\|_{\X{q/n}{\alpha_q+\beta}{T_{n-1}}(B_{n-1})}&\lesssim A^{O_n(1)}N^{1-\frac{3(n-1)}q-\beta}
}
using Holder's inequality, \eqref{one}, \eqref{critical}, and summing the geometric series, assuming additionally that $\max(-\frac2q,1-\frac{3(n-1)}q)<\beta\leq\min(\alpha_q,2-\frac{2(n-1)}q-)$. The ``low-high'' term is analogous. Thus \eqref{one} is proved, assuming that there exists a $\beta$ such that all the stated conditions on $\alpha$, $\beta$, and $n$ are satisfied. One easily checks that this is follows from the hypotheses given above.

It is straightforward to see that by taking the largest permissible $n$ satisfying the constraints of \eqref{one}, we can always make $q/n\in(1,2]$. Therefore we can apply \eqref{one} with $\alpha=0$ along with \eqref{localbernstein} and \eqref{critical} to find
\eqn{
\|P_Nu\|_{\X20{A_3/2}(B(x_1,A_4/2))}&\lesssim N^{\frac{3n}q-\frac32}\|P_Nu\|_{\X{q/n}0{A_3/2}(B_n)}+(A_4N)^{-50}A_4^{\frac32-\frac3q}N^{\alpha_q}A.
}
This implies, using \eqref{one}, H\"older's inequality, and \eqref{critical},
\eq{\label{l2}
\|P_Nu\|_{\X20{A_3/2}(B(x_1,A_4/2))}&\lesssim N^{-\frac12}A^{O(1)}
}
for $N\geq A_2^{-\frac12}$.

We bootstrap this estimate one more time to bring in factors of $A_1^{-1}$. The linear and global terms are estimated the same way as before so we neglect them and focus on the remaining parts. By the same calculation above, for $N\geq A_2^{-1/3}$,
\eqn{
\|P_Nu\|_{\X20{A_3/2}(B(x_1,A_4/4))}&\lesssim N^{-1}\|\tilde P_N(u\otimes u)\|_{\X20{A_3/4}(B(x_1,A_4/3))}.
}
Using the paraproduct decomposition and \eqref{localbernstein},
\eqn{
\|\tilde P_N(u\otimes u)\|_{\X20{A_3/2}(B(x_1,A_4/3))}\lesssim\sum_{N_1\lesssim N_2\sim N}\|P_{N_1}u\odot P_{N_2}u\|_{\X20{A_3/2}(B(x_1,A_4/2))}\\
+N^{3/4}\sum_{N_1\sim N_2\gtrsim N}\|P_{N_1}u\otimes P_{N_2}u\|_{L_t^\infty L_x^{4/3}(B(x_1,A_4/2))}.
}
By H\"older's inequality, \eqref{l2}, \eqref{pointwise}, and \eqref{linfty},
\begin{align*}
\sum_{N_1\lesssim N_2\sim N}\|P_{N_1}u\odot P_{N_2}u\|_{\X20{A_3/2}(B(x_1,A_4/2))}&\lesssim\sum_{N_1\lesssim N}A_1^{-1}A^{O(1)}N^{-1/2}\|P_{N_1}u\|_{\X\infty0{A_3/2}}\\
&\lesssim A^{O(1)}(A_2^{-1}+A_1^{-1}N^{1/2})
\end{align*}
and, using Young's inequality, interpolation, \eqref{l2}, \eqref{pointwise}, and \eqref{linfty}, 
\eqn{
\sum_{N_1\sim N_2\gtrsim N}\|P_{N_1}u\otimes P_{N_2}u\|_{\X{4/3}0{A_3/2}(B(x_1,A_4/2))}&\lesssim\sum_{N\lesssim N_1\leq A_2}A^{O(1)}N_1^{-1/4}A_1^{-1/2}\\
&\quad+\sum_{N_1\geq A_2}A^{O(1)}N_1^{-1/4}\\
&\lesssim A^{O(1)}(A_1^{-1/2}N^{-1/4}+A_2^{-1/4}).
}
Thus we conclude
\eq{\label{l2improved}
\|P_Nu\|_{\X20{A_3/2}(B(x_1,A_4/4))}&\lesssim A^{O(1)}((A_1N)^{-1/2}+(A_2N)^{-1/4}).
}

To reach the contradiction, Duhamel's formula and \eqref{lowerbound} give us
\eqn{
A_1^{-1}&\leq|P_1u(0,x_1)|\\
&\leq|e^{A_3\Delta/4}P_1u(-A_3/4)|(x_1)+\int_{-A_3/4}^0|e^{(t-t')\Delta}P_1\div\tilde P_1(u\otimes u)(t',x_1)|dt'.
}
For the first term, by \eqref{PNheat} and \eqref{critical},
\eqn{
|e^{A_3\Delta/4}P_1u(-A_3/4)|(x_1)\lesssim e^{-A_3/80}A
}
which is negligible compared to $A_1^{-1}$. Therefore
\eq{\label{conclusion}
\int_{-A_3/4}^0|e^{(t-t')\Delta}P_1\div\tilde P_1(u\otimes u)(t',x_1)|dt'\gtrsim A_1^{-1}.
}
By \eqref{localbernstein}, we have
\eqn{
|e^{(t-t')\Delta}P_1\div\tilde P_1(u\otimes u)(t',x_1)|\lesssim e^{-(t-t')/20}(\|\tilde P_1(u\otimes u)(t')\|_{L_x^\infty(B(x_1,A_4/8))}+A_4^{-50})
}
which admits the paraproduct decomposition
\eqn{
\|\tilde P_1(u\otimes u)\|_{\X\infty0{A_3/4}(B(x_1,A_4/8))}&\lesssim\sum_{N_1\lesssim N_2\sim 1}\|P_{N_1}u\odot P_{N_2}u\|_{\X\infty0{A_3/4}(B(x_1,A_4/4))}\\
&\quad+\sum_{1\lesssim N_1\sim N_2\leq A_2}\|P_{N_1}u\otimes P_{N_2}u\|_{\X{4/3}0{A_3/4}(B(x_1,A_4/4))}\\
&\quad+\sum_{N_1\sim N_2\geq A_2}\|P_{N_1}u\otimes P_{N_2}u\|_{\X10{A_3/4}(B(x_1,A_4/4))}.
}
We estimate each piece using H\"older's inequality, \eqref{linfty}, \eqref{pointwise}, \eqref{l2}, interpolation, and \eqref{l2improved}:
\eqn{
\sum_{N_1\lesssim N_2\sim1}\|P_{N_1}u\odot P_{N_2}u\|_{\X\infty0{A_3/4}(B(x_1,A_4/4))}&\lesssim\sum_{N_1\leq A_2^{-1},N_2\sim1}A^2N_1+\sum_{A_2^{-1}\leq N_1\lesssim N_2\sim1}A_1^{-2}N_1\\
&\lesssim A^2A_2^{-1}+A_1^{-2},
}
\eqn{
&\sum_{1\lesssim N_1\sim N_2\leq A_2}\|P_{N_1}u\otimes P_{N_2}u\|_{\X{4/3}0{A_3/4}(B(x_1,A_4/4))}\\
&\quad\quad\lesssim\sum_{1\lesssim N_1\leq A_2}A^{O(1)}((A_1N_1)^{-3/4}+(A_2N_1)^{-3/8})(A_1^{-1}N_1)^{1/2}\\
&\quad\quad\lesssim A^{O(1)}(A_1^{-5/4}+A_1^{-1/2}A_2^{-1/4}),
}
and
\eqn{
\sum_{N_1\sim N_2\geq A_2}\|P_{N_1}u\otimes P_{N_2}u\|_{\X10{A_3/4}(B(x_1,A_4/4))}&\lesssim\sum_{N_1\gtrsim A_2}A^{O(1)}N_1^{-1}\lesssim A^{O(1)}A_2^{-1}.
}
Comparing this upper bound with the lower bound \eqref{conclusion}, we reach the contradiction
\eqn{
A_1^{-1}\lesssim A^{O(1)}(A_2^{-1}+A_2^{-2}+A_1^{-5/4}+A_1^{-1/2}A_2^{-1/4}+A_2^{-1}).
}

Now we consider the $2<q<3$ case. By \eqref{lb}, \eqref{bernstein}, and \eqref{critical},
\eqn{
A_1^{-1}\leq|P_1u(0,x_1)|\leq r(x_1)^{-1+\frac2q}\|r^{1-\frac2q}P_1u(0)\|_{L_x^\infty}\lesssim r(x_1)^{-1+\frac2q}A.
}
Therefore we may assume $r(x_1)\lesssim A_2$ which will be useful at certain points to bound the heat propagator term when the power on the weight is larger than what we can handle using \eqref{bernstein}.

First, we show by induction on $n\geq1$ that
\eq{\label{three}
\|P_Nu\|_{\X q\alpha{T_n}(B_n)}&\lesssim N^{\alpha_q-\alpha}A^{O(1)}
}
if $N\geq A_2^{-1-\frac1n}$, $-\frac2q<\alpha<2-\frac3q$ and $\alpha\leq n(1-\frac2q)-\frac1q$, where now $T_n=(\frac12+\frac1{2n})A_3$ and $B_n=B(x_1,(\frac12+\frac1{2n})A_4)$. For $n=2$ and $t\in[-3A_3/4,0]$, first consider the linear term. Using the fact that $r(B_1)\leq A_4$, \eqref{PNheat}, and \eqref{critical},
\eqn{
\|P_Ne^{(t+2A_3)\Delta}u(-2A_3)\|_{\X q\alpha{A_3}(B_1)}&\lesssim A_4^{\alpha-\gamma}\|P_Ne^{(t+2A_3)\Delta}u(-2A_3)\|_{\X q\gamma{A_3}}\\
&\lesssim A_4^{\alpha-\gamma}N^{\alpha_q-\gamma}e^{-A_3N^2/20}A\\
&\lesssim(AN)^{-10}
}
upon taking, say, $\gamma=\min(\alpha_q,\alpha)$. Therefore, by H\"older's inequality, \eqref{localbernstein}, and \eqref{critical},
\eqn{
\|r^\alpha P_Nu(t)\|_{L_x^q(B_1)}&\lesssim(AN)^{-10}+\int_{-2A_3}^t\|r^\alpha P_Ne^{(t-t')\Delta}\mathbb P\div(u\otimes u)(t')\|_{L_x^q(\Rt)}dt'\\
&\lesssim(AN)^{-10}+\int_{-2A_3}^te^{-(t-t')N^2/20}N^{2+\alpha_q-\alpha}\|r^{2\alpha_q}u\otimes u\|_{L_x^{q/2}(\mathbb R^3)}dt'\\
&\lesssim N^{\alpha_q-\alpha}A^2
}
if $-\frac2q<\alpha\leq2-\frac5q$. Note that the lower bound $N\geq A_2^{-3/2}$ is essential to make the contribution of the linear term negligible. This completes the proof of the base case.

Next suppose we have the desired bound for some $n-1\geq2$. The linear term can be treated as in the previous case so we do not repeat the argument. Then, again by H\"older's inequality and \eqref{localbernstein}, for $t\in[-T_n,0]$,
\eqn{
\|r^\alpha P_Nu(t)\|_{L_x^q(B_n)}&\lesssim\|P_Ne^{(t+T_{n-1})\Delta}u(-T_{n-1})\|_{\X q\alpha{T_n}(B_n)}\\
&\quad+\int_{-T_{n-1}}^t\|r^\alpha P_Ne^{(t-t')\Delta}\mathbb P\div(u\otimes u)(t')\|_{L_x^q(B_n)}dt'\\
&\lesssim(A_4N)^{-10}+(A_4N)^{-50}A_3A_4^{\frac3q+\alpha-\gamma}N^{3-\frac3q-\gamma}\|u\otimes u\|_{\X{q/2}{2\alpha_q}{T_{n-1}}}\\
&\quad+N^{2+\beta-\alpha}\int_{-T_{n-1}}^te^{-(t-t')N^2/20}\bigg(\|r^{\beta+\alpha_q}P_{>N/100}u\otimes u\|_{L_x^{q/2}(B_{n-1})}\\
&\quad+\|r^{\beta+\alpha_q}P_{\leq N/100}u\otimes P_{>N/100}u)\|_{L_x^{q/2}(B_{n-1})}\bigg)dt'\\
&\lesssim(A_4N)^{-10}+N^{\beta-\alpha}\|P_{>N/100}u\|_{\X q\beta{T_{n-1}(B_{n-1})}}\|u\|_{\X q{\alpha_q}{T_{n-1}}}\\
&\lesssim N^{1-\frac3q-\alpha}A^{O(1)}
}
assuming $\gamma\leq\alpha$ and $-\frac2q<\gamma\leq2-\frac5q$ for the global Bernstein term, $-\frac2q<\alpha\leq\beta+\alpha_q+\frac1q$ and $\beta<1-\frac1q$ for the local Bernstein term, and $-\frac2q<\beta<2-\frac3q$ and $\beta\leq(n-1)(1-\frac2q)-\frac1q)$ for the inductive bound on $P_Nu$. One computes that such a $\beta$ and $\gamma$ exist under the stated conditions on $\alpha$ and $q$.

For fixed $q$, since $q>2$, the upper bound $n(1-\frac2q)-\frac1q$ becomes arbitrarily large by taking $n$ large so eventually the only constraint on $\alpha$ becomes $-\frac2q<\alpha<2-\frac3q$. Therefore we have
\eq{\label{cool}
\|P_Nu\|_{\X q\alpha {T/2}(B(x_1,1/2))}&\lesssim N^{\alpha_q-\alpha}A^{O(1)}
}
for all such $\alpha$ if $N\geq A_2^{-1}$. Now as in the $q>3$ case, we bootstrap this estimate one more time with \eqref{linfty} to bring in powers of $A_1^{-1}$. In the usual Duhamel formula for $P_Nu$ on $[-T_{n-1},0]$, we neglect the linear term and the global Bernstein term since they can be dealt with as above. Then by \eqref{localbernstein} and a paraproduct decomposition, for $t\in[-T_n,0]$,
\eqn{
\|P_Nu(t)\|_{L_x^q(B_n)}&\lesssim N^{-1}\sum_{N'\sim N}\|P_{N'}u\odot P_{\lesssim N}u\|_{\X q0{T_{n-1}}(B_{n-1})}\\
&\quad+N^{1-\frac2q}\sum_{N_1\sim N_2\gtrsim N}\|P_{N_1}u\otimes P_{N_2}u\|_{\X{3q/4}{2-3/q}{T_{n-1}}(B_{n-1})}.
}
For the first term, by H\"older's inequality, \eqref{cool} with $\alpha=0$, \eqref{pointwise}, and \eqref{linfty}, we have for $A_2^{-1/2}\leq N\leq A_2^{1/2}$
\eqn{
\sum_{N'\sim N}\|P_{N'}u\odot P_{\lesssim N}u\|_{\X q0{T_{n-1}}(B_{n-1})}&\lesssim N^{\alpha_q}A^{O(1)}\Bigg(\sum_{N_1\leq A_2^{-1}}AN_1+\sum_{A_2^{-1}\leq N_1\lesssim N}A_1^{-1}N_1\Bigg)\\
&\lesssim N^{1+\alpha_q}A^{O(1)}A_1^{-1}.
}
For the second term, by Young's inequality, the trivial interpolation inequality $$\|r^\beta f^2\|_{L^{3q/4}}\leq\|r^{3\beta/4}f\|_{L^q}^{4/3}\|f\|_{L^\infty}^{2/3},$$\eqref{cool} with $\alpha=\frac32-\frac9{4q}$, \eqref{linfty}, and \eqref{pointwise},
\eqn{
\sum_{N_1\sim N_2\gtrsim N}\|P_{N_1}u\otimes P_{N_2}u\|_{\X{3q/4}{2-\frac3q}{T_{n-1}}(B_{n-1})}&\lesssim\sum_{N\lesssim N_1\leq A_2}(N_1^{\alpha_q-(\frac32-\frac9{4q})}A^{O(1)})^{4/3}(A_1^{-1}N_1)^{2/3}\\
&\quad+\sum_{N_1\geq A_2}(N_1^{\alpha_q-(\frac32-\frac9{4q})}A^{O(1)})^{4/3}(AN_1)^{2/3}\\
&\lesssim A^{O(1)}(A_1^{-2/3}N^{-1/q}+A_2^{-1/q}).
}
Therefore, if $A_2^{-1/2}\leq N\leq A_2^{1/2}$,
\eq{\label{bootstrapped}
\|P_Nu\|_{\X q0{A_3/4}(B(x_1,A_4/4))}&\lesssim A^{O(1)}A_1^{-2/3}N^{\alpha_q}.
}

Now returning to \eqref{conclusion} and applying a paraproduct decomposition, H\"older's inequality, \eqref{bootstrapped}, \eqref{pointwise}, \eqref{linfty}, and \eqref{cool}, we have
\eqn{
A_1^{-1}&\lesssim\int_{-A_3/4}^0|e^{(t-t')\Delta}P_1\div(u\otimes u)(t',x_1)|dt'\\
&\lesssim\sum_{N'\sim1}\|P_{N'}u\odot P_{\lesssim 1}u\|_{\X q0{A_3/4}(B(x_1,A_4/4))}\\
&\quad+\sum_{N_1\sim N_2\gtrsim1}\|P_{N_1}u\otimes P_{N_1}u\|_{\X{q/2}0{A_3/4}(B(x_1,A_4/4))}\\
&\lesssim A^{O(1)}A_1^{-2/3}\left(\sum_{N_1\leq A_2^{-1}}AN_1+\sum_{A_2^{-1}\leq N_1\lesssim1}A_1^{-1}N_1\right)\\
&\quad+\sum_{1\lesssim N_1\leq A_2^{1/2}}A^{O(1)}A_1^{-4/3}N_1^{2\alpha_q}+\sum_{N_1\geq A_2^{1/2}}N_1^{2\alpha_q}A^{O(1)}\\
&\lesssim A^{O(1)}(A_1^{-2/3}A_2^{-1}+A_1^{-5/3}+A_1^{-4/3}+A_2^{\alpha_q})
}
which is the desired contradiction, recalling that $\alpha_q<0$ in this case.
\end{proof}

This proposition can be iterated exactly as in \cite{tao} to obtain the back propagation result we will need in the main argument.

\begin{prop}[{\cite[Proposition 3.1(v)]{tao}}]\label{iteratedbp}
Let $x_0\in\mathbb R^3$ and $N_0>0$ be such that
\eqn{
|P_{N_0}u(t_0,x_0)|\geq A_1^{-1}N_0.
}
Then for every $A_4N_0^{-2}\leq T_1\leq A_4^{-1}T$, there exists
\eqn{
(t_1,x_1)\in[t_0-T_1,t_0-A_3^{-1}T_1]\times\Rt
}
and
\eqn{
N_1=A_3^{O(1)}T_1^{-\frac12}
}
such that
\eqn{
x_1=x_0+O(A_4^{O(1)}T_1^{\frac12})
}
and
\eqn{
|P_{N_1}u(t_1,x_1)|\geq A_1^{-1}N_1.
}
\end{prop}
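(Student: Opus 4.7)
The strategy is to iterate Proposition \ref{backpropagationprop} starting from the initial concentration $(t_0,x_0,N_0)$, producing a sequence $(t_j,x_j,N_j)$ of concentrations satisfying $|P_{N_j}u(t_j,x_j)|\geq A_1^{-1}N_j$ together with the per-step controls
\eqn{
A_3^{-1}N_{j-1}^{-2}\leq t_{j-1}-t_j\leq A_3N_{j-1}^{-2},\quad|x_j-x_{j-1}|\leq A_4N_{j-1}^{-1},\quad N_j\in[A_2^{-1}N_{j-1},A_2N_{j-1}].
}
Since the hypothesis $T_1\geq A_4N_0^{-2}$ together with $T_1\leq A_4^{-1}T$ forces $N_0\geq A_4^{1/2}T_1^{-1/2}\gg A_3T^{-1/2}$, the hypothesis $N_j\geq A_3T^{-1/2}$ needed to apply Proposition \ref{backpropagationprop} is initially satisfied with considerable room to spare. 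I would halt the iteration at the first index $k$ at which the accumulated time shift $t_0-t_j$ meets or exceeds $A_3^{-1}T_1$.

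The stopping rule, together with $t_{k-1}-t_k\leq A_3N_{k-1}^{-2}$ and $t_0-t_{k-1}<A_3^{-1}T_1$, places the total time shift in the window $[A_3^{-1}T_1,T_1]$ provided $N_{k-1}^{-2}\lesssim T_1$, which is forced at termination. Conversely, the lower bound $t_{k-1}-t_k\geq A_3^{-1}N_{k-1}^{-2}$ combined with the termination condition forces $N_{k-1}^{-2}\gtrsim A_3^{-2}T_1$, pinning $N_{k-1}$ to $T_1^{-1/2}$ up to $A_3^{O(1)}$ factors and hence yielding $N_k=A_3^{O(1)}T_1^{-1/2}$. For the spatial displacement, the triangle inequality and Cauchy--Schwarz give
\eqn{
|x_k-x_0|\leq A_4\sum_{j<k}N_j^{-1}\leq A_4k^{1/2}\Bigl(\sum_{j<k}N_j^{-2}\Bigr)^{1/2}\lesssim A_4k^{1/2}(A_3T_1)^{1/2},
}
using $\sum_{j<k}N_j^{-2}\leq A_3(t_0-t_k)\leq A_3T_1$.

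The main obstacle is controlling the number of iterations $k$ by $A^{O(1)}$ despite the non-monotonic evolution of $N_j$, which in principle oscillates by factors of $A_2^{\pm1}$ per step. A careful bookkeeping argument --- of the kind carried out in \cite[Proposition 3.1(v)]{tao} --- shows that $N_j$ is confined to a window of size $A_2^{O(1)}$ around the terminal frequency $T_1^{-1/2}$ for all but $O(1)$ steps, since large upward excursions of $N_j$ force so many subsequent small time steps that the cumulative time shift cannot exhaust the budget $T_1$ without bringing $N_j$ back down. With $k\leq A^{O(1)}$ in hand, the spatial displacement is bounded by $A_4^{O(1)}T_1^{1/2}$ as required. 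Since our Proposition \ref{backpropagationprop} is the analog under \eqref{critical} and \eqref{cases} of the back propagation in \cite{tao}, this iteration transfers verbatim and yields the claimed quantitative back propagation at every scale $T_1$ in the admissible range.
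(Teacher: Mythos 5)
Your outline correctly identifies the right starting point — iterate Proposition \ref{backpropagationprop} and stop when the accumulated time shift reaches $\sim T_1$ — but the core of the proof is missing, and the route you sketch diverges from the one the paper indicates. The paper explicitly lists \emph{three} building blocks for this proposition: the back propagation (Proposition \ref{backpropagationprop}), the pointwise frequency-localized bounds (Lemma \ref{pointwiselemma}), and the bounded total speed property (Proposition \ref{tbsprop}). You invoke only the first. The pointwise bounds are what allow one to spread a concentration at a single spacetime point into a small spacetime neighborhood between successive iterates, and the bounded total speed is what ultimately controls the cumulative spatial wandering of the $x_j$; neither appears in your argument.

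The specific gap is in the displacement control. After Cauchy--Schwarz you reduce the spatial estimate to bounding the number of iterations $k$ by $A^{O(1)}$, and you assert that ``$N_j$ is confined to a window of size $A_2^{O(1)}$ around the terminal frequency $T_1^{-1/2}$ for all but $O(1)$ steps, since large upward excursions of $N_j$ force so many subsequent small time steps that the cumulative time shift cannot exhaust the budget $T_1$ without bringing $N_j$ back down.'' This is circular as stated (you are using the conclusion you want to prove as the reason it holds), and it is not true that one can rule out many upward excursions on time-budget grounds alone: each step at frequency level $A_2^m T_1^{-1/2}$ costs only $\sim A_3^{\pm 1}A_2^{-2m}T_1$ in time, so the iteration can in principle spend many steps at high frequency without exhausting the budget, and there is no Lyapunov quantity in what you have written to prevent repeated excursions up and down. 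Without an actual bound on $k$ the Cauchy--Schwarz step gives nothing, and the more naive geometric-series bound on $\sum_j N_j^{-1}$ only works under a monotonicity assumption you do not have. The honest way out, per the paper, is to use Proposition \ref{tbsprop} rather than trying to bound $k$.

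There is also a smaller bookkeeping slip: you claim the stopping rule ``places the total time shift in the window $[A_3^{-1}T_1,T_1]$ provided $N_{k-1}^{-2}\lesssim T_1$, which is forced at termination,'' and that ``the termination condition forces $N_{k-1}^{-2}\gtrsim A_3^{-2}T_1$.'' Neither deduction is correct as written. From $t_0-t_{k-1}<A_3^{-1}T_1$ and the per-step lower bound one only gets $N_j^{-2}\leq T_1$ for $j\leq k-2$, hence $N_{k-1}^{-2}\leq A_2^2T_1$; the overshoot is then bounded by $A_3A_2^2T_1$, not $T_1$, so the time shift lands in $[A_3^{-1}T_1,\,A_3A_2^2T_1]$ rather than $[A_3^{-1}T_1,T_1]$. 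And the purported lower bound $N_{k-1}^{-2}\gtrsim A_3^{-2}T_1$ does not follow from termination: the last step before stopping could be arbitrarily short. These can be repaired by adjusting constants within the $A_j$ hierarchy, but you should do that explicitly rather than assert the clean window. As it stands the proposal does not constitute a proof; it reduces everything to an unproved iteration count bound which the paper avoids entirely by appealing to bounded total speed.
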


We do not repeat the proof from \cite{tao} because it would proceed in the exact same manner now that we have all the building blocks: the back propagation proposition (Proposition \ref{backpropagationprop}), the pointwise bounds for the frequency-localized vector fields (Lemma \ref{pointwiselemma}), and the total bounded speed property (Proposition \ref{tbsprop}).

\subsection{Regularity away from the axis}

In the proof of Proposition \ref{main}, it will be necessary to propagate a concentration of the solution forward in time using the Carleman estimate from Proposition \ref{backwardunique}. Analogously to Proposition \ref{epochs}, Proposition \ref{regularityprop} will provide us with the pointwise control required for the Carleman inequality. In fact this is the main reason why the final theorems have improved bounds over those in \cite{tao}---we are able to find spacetime regions where $u$ has good estimates such that the distance from the axis is roughly on the order of $AT^{1/2}$. This should be compared to the $u\in L_t^\infty L_x^3$ case where, because bootstrapping is necessary, the annulus (or cylindrical shell) of regularity may have a distance from $x=0$ (or the $x_3$-axis) as large as $\sim e^AT^{1/2}$. See Section \ref{intro} for the heuristics that suggest this result.

\begin{prop}\label{regularityprop}
Suppose $u$ is as in Proposition \ref{epochs}. If $T'\in[0,T/2]$ and $R\geq(T')^{1/2}$, then in the region
\eqn{
\Omega=\{(t,x)\in[t_0-T',t_0]\times\mathbb R^3:r\geq R\},
}
we have
\eqn{
\|\grad^ju\|_{L_{t,x}^\infty(\Omega)}\lesssim (T')^{-\frac{j+1}2}\left(\frac{R^2}{T'}\right)^{-1/O(1)}A^{O(1)}
}
and
\eqn{
\|\grad^j\omega\|_{L_{t,x}^\infty(\Omega)}\lesssim (T')^{-\frac{j+2}2}\left(\frac{R^2}{T'}\right)^{-1/O(1)}A^{O(1)}
}
for $j=0,1$.
\end{prop}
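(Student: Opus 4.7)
The plan is to apply the decomposition $u=\ulin_n+\unlin_n$ of Proposition~\ref{hierarchy} (with $n$ chosen large enough depending on $q$) and to control the two pieces separately. By parabolic rescaling and time translation, we may assume $t_0=0$ and $T'=1$, so that $R\geq 1$ and the target reduces to $\|\grad^j u\|_{L^\infty_{t,x}}\lesssim R^{-1/O(1)}A^{O(1)}$ on $[-1,0]\times\{r\geq R\}$, with the analogous estimate for $\omega$. The flat piece $\ulin_n$ is the easy one: applying \eqref{ulin-bound} with $p$ taken large (or passed to $\infty$ by interpolation from a nearby finite $p$) and with $\alpha$ as large as the hypotheses allow---namely $\alpha=\alpha_q>0$ in the $q>3$ case, and any $\alpha<1-\tfrac{2}{q}$ in the axisymmetric case $q\in(2,3]$ (positive because $q>2$)---gives $\|r^\alpha\grad^j\ulin_n\|_{L^\infty_{t,x}}\lesssim A^{O(1)}$. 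Dividing by $r^\alpha\geq R^\alpha$ on $\{r\geq R\}$ immediately delivers $|\grad^j\ulin_n|\lesssim R^{-\alpha}A^{O(1)}$, which is better than the required bound.

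The sharp piece $\unlin_n$ requires more work. The key input is that the proof of Proposition~\ref{hierarchy} in fact establishes \emph{weighted} estimates $\|r^\alpha\unlin_n\|_{L_t^\infty L_x^p}\lesssim A^{O_n(1)}$ (see the intermediate bound \eqref{unlin}) for a range of finite $p\in[p_0,3)$ and some $\alpha>0$. Restricted to $\{r\geq R\}$ these become unweighted $L^p$ bounds with smallness factor $R^{-\alpha}$. The plan is to upgrade this $L^p$ smallness to pointwise smallness by adapting the energy-bootstrap of Proposition~\ref{epochs} to the cylindrical setting. Specifically: localize the equation for $\unlin_n$ with a smooth radial cutoff $\phi=\phi(r)$ supported in $\{r\geq R/4\}$ and identically $1$ on $\{r\geq R/2\}$; derive an $L_t^2 H_x^1$ bound for $\phi\unlin_n$ in which every nonlinear contribution either sits inside $\supp\phi$ (and thus carries an $R^{-\alpha}$ factor, either from the pointwise weighted bound on $\ulin_{n-1}$ or from the weighted $L^p$ bound on $\unlin_{n-1}$) or is a cutoff commutator controlled by the bounds on $\unlin_{n-1}$ themselves. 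Sobolev embedding upgrades this to $L_t^2 L_x^6$, after which a bootstrap via Duhamel's formula and the weighted heat estimate \eqref{heat} propagates the smallness through $L_t^4 L_x^\infty$, $L_t^8 L_x^\infty$, and finally $L^\infty_{t,x}$, exactly mirroring Proposition~\ref{epochs}. The bound on $\grad\unlin_n$ follows from one more Duhamel iteration, and the vorticity estimates from parabolic regularity applied to the vorticity equation, as in that proposition.

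The principal obstacle is not any individual step---each one parallels Proposition~\ref{epochs}---but the bookkeeping needed to track the factor $R^{-c}$ through every stage of the bootstrap. Two points warrant particular care. First, the cutoff commutators generated by $\phi$ (of size $|\phi'|\sim R^{-1}$, which in this rescaling is comparable to the natural inverse heat length $(T')^{-1/2}=1$) must be organized on a sequence of nested radial shells in the style of Proposition~\ref{localbernsteinprop}, so that they can be absorbed rather than losing the smallness. Second, the purely self-interacting quadratic term $\unlin_{n-1}\otimes\unlin_{n-1}$ must be estimated via the \emph{weighted} bounds \eqref{unlin} rather than the weightless \eqref{unlinq>3}, since only the former supply the $R^{-c}$ smallness once restricted to $\{r\geq R/2\}$; this dictates the range of $p$ and $\alpha$ one selects and is what forces $n$ to be chosen large depending on $q$.
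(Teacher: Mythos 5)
Your handling of $\ulin_n$ is essentially right (the paper derives it the same way, by summing Bernstein applied to the frequency-localized pieces from \eqref{ulinPNq}/\eqref{ulinPNinfty}). But your plan for $\unlin_n$---a cylindrically cut-off version of the energy-and-bootstrap argument of Proposition~\ref{epochs}---is genuinely different from the paper's proof and, as described, has two gaps that are not mere bookkeeping. First, the pressure does not drop out once you introduce the cutoff. In Proposition~\ref{epochs} the nonlocal Leray term vanishes because $\unlin_n$ and $\Delta\unlin_n$ are divergence free, but $\phi\unlin_n$ and $\Delta(\phi\unlin_n)$ are not; you are left with commutators such as $\int p\,(\grad\phi\cdot\unlin_n)\,dx$ and $\int p\,\div(\phi\,\Delta(\phi\unlin_n))\,dx$, and $p=-\Delta^{-1}\partial_i\partial_j(u_iu_j)$ is nonlocal, so its trace on $\supp\grad\phi$ is \emph{not} automatically small even though $u$ is small there. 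Controlling it requires a weighted Calder\'on--Zygmund estimate in the spaces $\X q\alpha T$ that the proposal never invokes and the paper never needs. Second, and more seriously, the bootstrap of Proposition~\ref{epochs} only provides $L^\infty_t\dot H^1_x$ control after a pigeonhole in time and a continuity argument on a short interval, so mirroring it gives the pointwise bounds only on a subinterval of $[t_0-T',t_0]$. If everything carries a factor $R^{-c}$, the differential inequality for the cut-off enstrophy takes a Gronwall form $\frac{d}{dt}E_1^\phi\lesssim R^{-c}A^{O(1)}(1+E_1^\phi)+(E_1^\phi)^3$, and the associated growth factor over a time of order $T'$ is $\exp\big(R^{-c}A^{O(1)}\big)$. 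When $R$ is only polynomially large in $A$ this exponential destroys the claimed bound, yet the proposition must hold (and cannot be absorbed into $A^{O(1)}$) for all $R\geq (T')^{1/2}$; worse, the pointwise estimate is required on the \emph{entire} interval $[t_0-T',t_0]$, since it feeds the backward-uniqueness Carleman inequality in Proposition~\ref{main}.

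The paper avoids both difficulties by not using an energy estimate at all. It first notes (via the weighted Bernstein inequality and \eqref{critical}) the subcritical pointwise bound \eqref{help}: on $\{r\geq R/10\}$, $|P_N\ulin_n|,|P_N\unlin_n|\lesssim N^{\gamma_q}R^{-1+\gamma_q}A^{O(1)}$ with $\gamma_q<1$, and then iterates Duhamel's formula for $P_N\unlin_n$ directly in the cylindrical shell using the localized Bernstein inequality \eqref{localbernstein} and a paraproduct decomposition. The Leray projector is handled as a bounded Fourier multiplier in the iteration, so no pressure estimate is needed, and Duhamel propagates the bound forward in time from the initial slice without any pigeonholing, so the estimate holds on the whole interval. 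Each iteration gains a factor $\min((NR)^{1-\gamma_q},N)^{-1}$, leading to \eqref{fast}, and summing over frequencies (split into low, intermediate $N\lesssim R^c$, and high $N\gtrsim R^c$ regimes) gives the $R^{-1/O(1)}$ decay. You should replace the epochs-of-regularity strategy with this iterative Duhamel scheme, or else find a way to (i) prove a weighted pressure bound and (ii) avoid the time pigeonhole.
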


\begin{proof}

Let us shift time and rescale to achieve $t_0=0$ and $T'=1$. First we note that by \eqref{bernstein}, \eqref{ulin-bound}, and \eqref{critical}, if $q>3$ and $f$ is either $\ulin_n$ or $\unlin_n$, we have
\eqn{
R^{1-\frac3q}\|P_Nf\|_{L_{t,x}^\infty([-1/2,0]\times\{r\geq R/10\})}\lesssim\|r^{1-\frac3q}P_Nf\|_{L_{t,x}^\infty([-1/2,0]\times\Rt)}&\lesssim N^{\frac3q}A^{O_n(1)}
}
and if instead $q\leq3$ and $u$ is axisymmetric, then
\eqn{
R^{1-\frac2q}\|P_Nf\|_{L_{t,x}^\infty([-1/2,0]\times\{r\geq R/10\})}\lesssim\|r^{1-\frac2q}P_Nf\|_{L_{t,x}^\infty([-1/2,0]\times\Rt)}&\lesssim N^{\frac2q}A^{O_n(1)}.
}
Let us therefore define $\thing_q=\frac3q$ in the former case and $\thing_q=\frac2q$ in the latter so that we always have
\eq{\label{help}
\|P_N\ulin_n\|_{L_{t,x}^\infty([-1/2,0]\times\{r\geq R/10\})},\:\|P_N\unlin_n\|_{L_{t,x}^\infty([-1/2,0]\times\{r\geq R/10\})}&\lesssim N^{\thing_q}R^{-1+\thing_q}A^{O_n(1)}.
}
Importantly, in either case, $\thing_q<1$. The point is that while staying uniformly away from the $x_3$-axis, this is a subcritical estimate and we can iteratively improve it with Duhamel's principle. Let us begin with a straightforward application of \eqref{PNheat}, H\"older's inequality, and \eqref{critical} to obtain, for $t\in[-1/2,0]$,
\eqn{
\|r^{2\alpha_q}P_N\unlin_1(t)\|_{L_x^{q/2}(\mathbb R^3)}&\leq\int_{-1/2}^t\|r^{2\alpha_q}P_Ne^{(t-t')\Delta}\mathbb P\div(u\otimes u)(t')\|_{L_x^{3/2}(\Rt)}dt'\\
&\lesssim\int_{-1/2}^te^{-(t-t')N^2/20}N\|r^{2\alpha_q}u\otimes u\|_{L_x^{q/2}(\Rt)}dt'\\
&\lesssim N^{-1}A^2.
}
Next, we have
\eqn{
P_N\unlin_n(t)=\int_{-1/2}^tP_Ne^{(t-t')\Delta}\div\tilde P_N(\unlin_{n-1}\otimes\unlin_{n-1}+2\unlin_{n-1}\odot\ulin_{n-1})dt'.
}
By \eqref{localbernstein}, for $t\in[-1/2,0]$, we have
\eqn{
&\|r^{2\alpha_q}P_N\unlin_n(t)\|_{L_x^{q/2}(r\geq(\frac12-2^{-n})R)}\\
&\quad\lesssim_n N^{-1}\|r^{2\alpha_q}(Y_1+Y_2+Y_3+Y_4+Y_5)\|_{L_x^{q/2}(r\geq(\frac12-2^{-(n-1)})R)}\\
&\quad\quad+(NR)^{-50}N^{-1}\|r^{2\alpha_q}(\unlin_{n-1}\otimes\unlin_{n-1}+2\unlin_{n-1}\odot\ulin_{n-1})\|_{L_x^{q/2}(\mathbb R^3)}
}
where we decompose $P_N(\unlin_{n-1}\otimes\unlin_{n-1}+2\unlin_{n-1}\odot\ulin_{n-1})$ with the paraproducts
\eqn{
Y_1&=2\sum_{N'\sim N}P_{N'}\unlin_{n-1}\odot P_{\leq N/100}\unlin_{n-1}\\
Y_2&=\sum_{N_1\sim N_2\gtrsim N}P_{N_1}\unlin_{n-1}\otimes P_{N_2}\unlin_{n-1}\\
Y_3&=\sum_{N_1\sim N_2\gtrsim N}P_{N_1}\ulin_{n-1}\otimes P_{N_2}\unlin_{n-1}\\
Y_4&=2\sum_{N'\sim N}P_{N'}\ulin_{n-1}\odot P_{\leq N/100}\unlin_{n-1}\\
Y_5&=2\sum_{N'\sim N}P_{\leq N/100}\ulin_{n-1}\odot P_{N'}\unlin_{n-1}
}
By H\"older's inequality, \eqref{ulin-bound}, and \eqref{critical}, the global Bernstein term is bounded by $(NR)^{-50}N^{-1}A^{O_n(1)}$. Let $\Omega_n=[-1/2,0]\times\{r\geq(\frac12-2^{-n})R\}$. Assuming $N\gtrsim_{n,c}R^c$, where $c>0$ is a small constant depending on $q$, by H\"older's inequality, \eqref{bernstein}, \eqref{ulin-bound}, \eqref{critical}, \eqref{help}, \eqref{ulinPNq}, and \eqref{ulinPNinfty},
\eqn{
\|r^{2\alpha_q}Y_1\|_{L_t^\infty L_x^{q/2}(\Omega_{n-1})}&\lesssim\sum_{N'\sim N}\|r^{2\alpha_q}P_{N'}\unlin_{n-1}\|_{L_t^\infty L_x^{q/2}(\Omega_{n-1})}\\
&\quad\quad\times\sum_{N'\lesssim n}A^{O_n(1)}N'\max(N'R,1)^{-1+\gamma_q}\\
&\lesssim A^{O_n(1)}N^{\thing_q}R^{-1+\thing_q}\sum_{N'\sim N}\|r^{2\alpha_q}P_{N'}\unlin_{n-1}\|_{L_t^\infty L_x^{q/2}(\Omega_{n-1})},
}
\eqn{
\|r^{2\alpha_q}Y_2\|_{L_t^\infty L_x^{q/2}(\Omega_{n-1})}&\lesssim\sum_{N_1\sim N_2\gtrsim N}A^{O_n(1)}N_1^{\thing_q}R^{-1+\thing_q}\|r^{2\alpha_q}P_{N_2}\unlin_{n-1}\|_{L_t^\infty L_x^{q/2}(\Omega_{n-1})},
}
\eqn{
\|r^{2\alpha_q}Y_3\|_{L_t^\infty L_x^{q/2}(\Omega_{n-1})}&\lesssim\sum_{N_1\sim N_2\gtrsim N}e^{-N_1^2/O_n(1)}N_1
A^{O_n(1)}\|r^{2\alpha_q}P_{N_2}\unlin_{n-1}\|_{L_t^\infty L_x^{q/2}(\Omega_{n-1})},
}
\eqn{
\|r^{2\alpha_q}Y_4\|_{L_t^\infty L_x^{q/2}(\Omega_{n-1})}&\lesssim\sum_{N'\sim N}\|P_{N'}\ulin_{n-1}\|_{\X q{\alpha_q}1}\|P_{\lesssim N}\unlin_{n-1}\|_{\X q{\alpha_q}1}\\
&\lesssim e^{-N^2/O_n(1)}A^{O_n(1)},
}
and
\eqn{
\|r^{2\alpha_q}Y_5\|_{L_t^\infty L_x^{q/2}(\Omega_{n-1})}&\lesssim\sum_{N'\lesssim N}e^{-(N')^2/O_n(1)}N'A^{O_n(1)}\sum_{N'\sim N}\|r^{2\alpha_q}P_{N'}\unlin_{n-1}\|_{L_t^\infty L_x^{q/2}(\Omega_{n-1})}\\
&\lesssim A^{O_n(1)}\sum_{N'\sim N}\|r^{2\alpha_q}P_{N'}\unlin_{n-1}\|_{L_t^\infty L_x^{q/2}(\Omega_{n-1})}.
}
In total,
\eqn{
\|r^{2\alpha_q}P_N\unlin_n\|_{L_t^\infty L_x^{q/2}(\Omega_n)}&\lesssim A^{O_n(1)}((NR)^{1-\thing_q}+N)^{-1}\sum_{N'\sim N}\|r^{2\alpha_q}P_N\unlin_{n-1}\|_{L_t^\infty L_x^{q/2}(\Omega_{n-1})}\\
&\quad+R^{-1+\thing_q}N^{-1}A^{O_n(1)}\sum_{N_1\gtrsim N}N_1^{\thing_q}\|r^{2\alpha_q}P_{N_1}\unlin_{n-1}\|_{L_t^\infty L_x^{q/2}(\Omega_{n-1})}\\
&\quad+N^{-1}A^{O_n(1)}e^{-N^2/O_n(1)}.
}
Iteratively applying this, we find
\eq{\label{fast}
\|r^{2\alpha_q}P_N\unlin_n\|_{\X{q/2}{2\alpha_q}1(r\geq R/2)}&\lesssim_nA^{O_n(1)}N^{-1}\min((NR)^{1-\thing_q},N)^{-n+1},
}
noting that the assumption $N\gtrsim_{n,c}R^c$ implies
\eqn{
e^{-N^2/O_n(1)}\lesssim((NR)^{1-\gamma_q},N)^{-n+1}.
}
In order to make use of \eqref{fast}, we take the Littlewood-Paley decomposition of $\grad^j\unlin_n$ and apply \eqref{bernstein} and \eqref{localbernstein} to find
\eqn{
\|\grad^j\unlin_n\|_{L_{t,x}^\infty(\Omega)}&\lesssim\|P_{\leq R^{-1}}\grad^j\unlin_n\|_{\X\infty01}+\sum_{R{-1}<N\lesssim_{n,c}R^c}N^j\|P_N\unlin_n\|_{L_{t,x}^\infty(\Omega)}\\
&+\sum_{N\gtrsim_{n,c}R^c}\bigg(N^{2+j}\|P_N\unlin_n\|_{\X{q/2}{2\alpha_q}1(r\geq R/2)}+(NR)^{-50}N^{1+j}\|P_N\unlin_n\|_{\X q{\alpha_q}1}\bigg).
}
Thanks to \eqref{bernstein}, \eqref{ulin-bound}, and \eqref{critical}, the first term is bounded by $R^{-1-j}A^{O_n(1)}$. The global Bernstein term is estimated the same way, and summing the geometric series, we obtain the bound $A^{O_n(1)}R^{-40}$. For the intermediate frequency term, we apply \eqref{cool} and sum the geometric series to find
\eqn{
\sum_{R^{-1}<N\lesssim_{n,c}R^c}N^j\|P_N\unlin_n\|_{L_{t,x}^\infty(\Omega)}&\lesssim_{n,c}A^{O_n(1)}R^{-1+\thing_q+(\thing_q+j)c}.
}
For the high frequency term, we have to split the sum once again depending on how the minimum is attained in \eqref{fast}.
\eqn{
\sum_{N\gtrsim_{n,c}R^c}N^{2+j}\|P_N\unlin_n\|_{\X{q/2}{2\alpha_q}1(r\geq R/2)}&\lesssim\sum_{R^c\lesssim_{n,c}N\lesssim R^{\frac1{\thing_q}-1}}A^{O_n(1)}N^{j-n+2}\\
&\quad+\sum_{N\gtrsim R^{\frac1{\thing_q}-1}}A^{O_n(1)}N^{1+j}(NR)^{-(1-\thing_q)(n-1)}\\
&\lesssim_{n,c}A^{O_n(1)}R^{\min(\frac1{\thing_q}-1,c)(j-n+2)}
}
where $j=0,1,2,3$, assuming $n>10/(1-\thing_q)$ in order to make the series summable. By taking $n$ and $c^{-1}$ sufficiently large depending on $q$, all the powers on $R$ can be made uniformly negative, that is to say
\eqn{
\|\grad^j\unlin_n\|_{L_{t,x}^\infty(\Omega)}&\lesssim A^{O_n(1)}R^{-1/O_n(1)}.
}
Moreover, by essentially the same argument we used for \eqref{help}, we have
\eqn{
\|\grad^j\ulin_n\|_{L^\infty_{t,x}(\Omega)}&\lesssim A^{O_n(1)}R^{-(1-\thing_q)}.
}
Since $u=\ulin_n+\unlin_n$ and $\gamma_q<1$, this proves the estimates for $\grad^ju$.

The estimate for the vorticity follows by analogously estimating $\|r^{2\alpha_q}P_N\curl\unlin_n\|_{L_x^{q/2}(r\gtrsim R)}$ and $\|\grad^j\curl\ulin_n\|_{L_x^\infty(r\gtrsim R)}$ and concluding in the same manner.
\end{proof}

\section{Carleman estimates}\label{carleman}

We quote from \cite[Lemma 4.1]{tao} the general Carleman inequality for the backward heat operator $L=\dd_t+\Delta$ from which Carleman inequalities for specific domains and weight functions can be derived. Note that it is conventional to work with the backward heat operator even though we intend to apply these estimates to the forward heat equation.
\begin{lemma}
Let $[t_1,t_2]$ be a time interval and $u:C_c^\infty([t_1,t_2]\times\mathbb R^d\to\mathbb R^m)$ solve the backwards heat equation
\eqn{
Lu=f.
}
Fix a smooth weight function $g:[t_1,t_2]\times\mathbb R^d\to\mathbb R$ and define
\eqn{
F=\dd_tg-\Delta g-|\grad g|^2.
}
Then we have
\begin{equation}\label{generalcarleman}
\begin{aligned}
&\int_{t_1}^{t_2}\int_{\mathbb R^d}\left(\frac12(LF)|u|^2+2D^2g(\grad u,\grad u)\right)e^gdxdt\\
&\quad\leq\frac12\int_{t_1}^{t_2}\int_{\mathbb R^d}|Lu|^2e^gdxdt+\int_{\mathbb R^d}\left(|\grad u|^2+\frac12F|u|^2\right)e^gdx\Big|_{t=t_1}^{t=t_2}.
\end{aligned}
\end{equation}
\end{lemma}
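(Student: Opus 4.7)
The plan is to derive this Carleman identity by the classical conjugate-and-commute strategy. Set $v = e^{g/2} u$ (which preserves compact support), and a direct chain-rule computation yields $e^{g/2} L u = \mathcal L v$, where
\begin{equation*}
\mathcal L v = \dd_t v + \Delta v - \nabla g \cdot \nabla v - \tfrac12\bigl(\dd_t g + \Delta g - \tfrac12 |\nabla g|^2\bigr) v.
\end{equation*}
Split $\mathcal L = S + A$ into its formally symmetric and antisymmetric parts on $L^2(\mathbb R^d)$:
\begin{equation*}
S v = \Delta v - \tfrac12\bigl(\dd_t g - \tfrac12|\nabla g|^2\bigr) v, \qquad A v = \dd_t v - \nabla g \cdot \nabla v - \tfrac12(\Delta g)\, v.
\end{equation*}
The correction $-\tfrac12 \Delta g$ inside $A$ is precisely what is needed to make its spatial part antisymmetric, since the $L^2(\mathbb R^d)$-adjoint of $-\nabla g \cdot \nabla$ is $\nabla g \cdot \nabla + \Delta g$.

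The factor $\tfrac12$ on the right-hand side of the inequality comes from the elementary pointwise bound $(Sv - Av)^2 \geq 0$, which rearranges to $4(Sv)(Av) \leq (Sv+Av)^2 = |\mathcal L v|^2$; integrating gives $2\int (Sv)(Av)\,dxdt \leq \tfrac12 \int |L u|^2 e^g\,dx dt$. Independently, symmetry of $S$ and spatial antisymmetry of $A$ together with one integration by parts of $\dd_t$ in time yield $2\int (Sv)(Av)\,dxdt = \int v[S,A] v\, dxdt - [\int v\,Sv\,dx]_{t_1}^{t_2}$. After spatial IBP, $\int v\,Sv\,dx = -\int(|\nabla v|^2 + \tfrac12(\dd_t g - \tfrac12 |\nabla g|^2) v^2)\,dx$; undoing the conjugation $v = e^{g/2}u$ and absorbing the mixed term via the identity $\int u \nabla u \cdot \nabla g \, e^g\,dx = -\tfrac12 \int u^2 (\Delta g + |\nabla g|^2) e^g\,dx$ collapses this into exactly the announced boundary $[\int(|\nabla u|^2 + \tfrac12 F u^2) e^g\,dx]_{t_1}^{t_2}$.

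The heart of the proof is the pointwise commutator computation. Using the standard identity $[\Delta, \nabla g \cdot \nabla]v = 2 D^2 g : D^2 v + \nabla \Delta g \cdot \nabla v$, a direct expansion yields
\begin{equation*}
[S, A] v = -2\,D^2 g : D^2 v - 2 \nabla \Delta g \cdot \nabla v + \tfrac12 Q v, \quad Q := \dd_t^2 g - 2\nabla g \cdot \nabla \dd_t g - \Delta^2 g + D^2 g(\nabla g, \nabla g).
\end{equation*}
Pairing against $v$ and integrating the $-2 v\, D^2 g : D^2 v$ term once by parts in space produces $2 D^2 g(\nabla v, \nabla v) + 2 v \nabla \Delta g \cdot \nabla v$, so the $\nabla \Delta g \cdot \nabla v$ contributions cancel exactly, leaving $\int v[S,A]v\,dxdt = \int (2 D^2 g(\nabla v, \nabla v) + \tfrac12 Q v^2)\,dxdt$. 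Converting back to $u$ produces $2 D^2 g(\nabla u, \nabla u) e^g$ plus cross terms $2u\,D^2 g(\nabla g, \nabla u) e^g$ and $\tfrac12 u^2 D^2 g(\nabla g, \nabla g) e^g$; one more IBP on the first cross term yields an extra $-(|D^2 g|^2 + \nabla g \cdot \nabla \Delta g + D^2 g(\nabla g, \nabla g)) u^2 e^g$, and these combine with $\tfrac12 Q u^2 e^g$ to give precisely $\tfrac12 (LF)\, u^2 e^g$, since $LF = \dd_t^2 g - 2 \nabla g \cdot \nabla \dd_t g - \Delta^2 g - 2|D^2 g|^2 - 2 \nabla g \cdot \nabla \Delta g$.

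The only genuine obstacle is algebraic bookkeeping: every leftover cross term generated by the commutator and by the $v \leftrightarrow u$ conversions (both in the interior integral and on the $t$-boundary) must recombine exactly into the clean coefficients $\tfrac12 LF$ and $\tfrac12 F$ appearing in the statement. Beyond this, the proof is purely formal, using only integration by parts and the elementary inequality $(Sv - Av)^2 \geq 0$.
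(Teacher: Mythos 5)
The paper does not prove this lemma; it quotes it verbatim from \cite[Lemma 4.1]{tao}, so there is no internal proof to compare against. Your proposal runs the standard conjugate-and-commute (energy/pseudoconvexity) argument, which is the natural route to such Carleman identities and is morally the same as what Tao does in the cited reference.

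All the genuinely delicate steps check out. The conjugated operator $\mathcal{L}=e^{g/2}Le^{-g/2}$ is computed correctly, the split $\mathcal{L}=S+A$ is genuinely symmetric/antisymmetric, the commutator identity $[S,A]v=-2\,D^2 g:D^2 v-2\nabla\Delta g\cdot\nabla v+\tfrac12 Qv$ with $Q=\dd_t^2 g-2\nabla g\cdot\nabla\dd_t g-\Delta^2 g+D^2 g(\nabla g,\nabla g)$ is right, the cancellation of the $\nabla\Delta g\cdot\nabla v$ terms after one spatial integration by parts is exactly as you say, and the conversion of $2D^2 g(\nabla v,\nabla v)+\tfrac12 Q v^2$ back to $\big(\tfrac12 LF|u|^2+2D^2 g(\nabla u,\nabla u)\big)e^g$ works out, as does $\int vSv\,dx=-\int\big(|\nabla u|^2+\tfrac12 F|u|^2\big)e^g\,dx$.

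There is, however, a sign error in the time integration by parts. The correct identity is
\begin{equation*}
2\int_{t_1}^{t_2}\!\!\int (Sv)(Av)\,dx\,dt=\int_{t_1}^{t_2}\!\!\int v\,[S,A]v\,dx\,dt\;+\;\Big[\int v\,Sv\,dx\Big]_{t_1}^{t_2},
\end{equation*}
with a plus, not a minus. (Sanity check: take $S=\lambda$ constant, $A=\dd_t$; then $2\int\!\int \lambda v\,\dd_t v=\lambda[\int v^2]_{t_1}^{t_2}=[\int v\,Sv]_{t_1}^{t_2}$.) Since $\int v\,Sv=-\int(|\nabla u|^2+\tfrac12 F|u|^2)e^g$ is negative of the boundary density in the statement, your minus sign makes the boundary term cancel to the announced form \emph{before} rearrangement; but after substituting into $2\int(Sv)(Av)\leq\tfrac12\int|Lu|^2e^g$ and solving for $\int v[S,A]v$, it would appear with the wrong sign: $\int v[S,A]v\leq \tfrac12\int|Lu|^2 e^g-[\int(|\nabla u|^2+\tfrac12 F|u|^2)e^g]_{t_1}^{t_2}$, which is not the claim. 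With the corrected plus sign, $-[\int v\,Sv]_{t_1}^{t_2}$ lands on the right-hand side and supplies exactly $+[\int(|\nabla u|^2+\tfrac12 F|u|^2)e^g]_{t_1}^{t_2}$. This is a contained bookkeeping slip, not a flaw in the method; fixing this one sign completes the proof.
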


Our first application of this lemma is to a Carleman estimate resembling the one used to prove backward uniqueness for the heat operator in \cite{ess} and the quantitative analog appearing in \cite{tao}. Unfortunately that estimate relies on the differential inequality \eqref{differentialinequality} holding in an annular region (or, in the qualitative case, the complement of a ball), which cannot possibly be contained in the cylindrical regions of regularity provided by Proposition \ref{regularityprop}. Thus we prove a variant that is suited to this geometry.

In this section only, since we give the result in the general setting $\mathbb R^{d_1+d_2}$ (where the last $d_2$ coordinates correspond to the ``axis''), we extend the definition of $r$ and define $|z|$ to be
\eqn{
r:=\sqrt{x_1^2+\cdots+x_{d_1}^2},\quad |z|:=\sqrt{x_{d_1+1}^2+\cdots x_{d_1+d_2}^2}.
}
The regions $\shell{r_-}{r_+}$, etc.\ are defined in the same way as before but in terms of the generalized $r$ and $|z|$ coordinates, where naturally $|z|$ replaces $|x_3|$.

Proposition \ref{backwardunique} generalizes a quantitative Carleman inequality from \cite{tao} which corresponds to the case $d_1=3$, $d_2=0$. In the paper at hand, we will be using the case $d_1=2$, $d_2=1$.

\begin{prop}[Backward uniqueness Carleman estimate]\label{backwardunique}
Let $d_1\geq1$, $d_2\geq0$, $T>0$, $0<r_-<r_+$, and $\mathcal C$ denote the spacetime region
\eqn{
\mathcal C=\{(t,x)\in\mathbb R\times\mathbb R^{d_1+d_2}:t\in[0,T],\,r_-\leq r\leq r_+,\,|z|\leq r_+\}.
}
Let $u:\mathcal C\to\mathbb R$ be a smooth function obeying the differential inequality
\eq{\label{differentialinequality}
|Lu|\leq\frac1{C_0T}|u|+\frac1{(C_0T)^{1/2}}|\grad u|
}
on $\mathcal C$. Assume the inequality
\eqn{
r_-^2\geq4C_0T.
}
Then one has
\eqn{
\int_0^{T/4}\int_{\shelltrunc{10r_-}{\frac{r_+}2}{\frac{r_+}2}}(T^{-1}|u|^2+|\grad u|^2)dxdt&\lesssim C_0e^{-\frac{r_-r_+}{4C_0T}}(X+e^{\frac{2r_+^2}{C_0T}}Y)
}
where
\eqn{
X=\int\int_{\mathcal C}e^{2|x|^2/C_0T}(T^{-1}|u(t,x)|^2+|\grad u(t,x)|^2)dxdt
}
and
\eqn{
Y=\int_{\shelltrunc{r_-}{r_+}{r_+}}|u(0,x)|^2dx.
}
\end{prop}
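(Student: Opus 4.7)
The plan is to apply the general Carleman inequality \eqref{generalcarleman} to a smoothly cut-off version of $u$, with a weight $g(t,x)$ designed for the cylindrical geometry of $\mathcal{C}$. Since $u$ is defined only on $\mathcal{C}$, I would multiply by a spacetime cutoff $\chi(r,|z|)\eta(t)$ that equals $1$ on $\shelltrunc{10r_-}{r_+/2}{r_+/2}\times[0,T/4]$ and vanishes near $\dd\mathcal{C}\cup\{t=T\}$. The contribution $\chi\eta Lu$ is controlled by \eqref{differentialinequality}, so for $C_0$ sufficiently large the $\tfrac12|L(\chi\eta u)|^2 e^g$ term on the right of \eqref{generalcarleman} splits into a part dominated by $(C_0T)^{-2}|u|^2 + (C_0T)^{-1}|\grad u|^2$ (which, weighted by $e^g$, absorbs into the Carleman left-hand side) and a commutator $2\grad\chi\cdot\grad u\,\eta + (\Delta\chi)\eta u + \chi(\dd_t\eta)u$ supported near $\dd\mathcal{C}\cup\{t=T\}$.

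The weight must achieve four things: (i) the positivity bounds $\tfrac12 LF \gtrsim (C_0T)^{-2}$ and $D^2 g \gtrsim (C_0T)^{-1}\Id$ on $\mathcal{C}$, so that the Carleman left-hand side controls $C_0^{-1}\int(T^{-1}|u|^2 + |\grad u|^2)e^g$; (ii) $g \ge -r_- r_+/(4C_0T) + O(1)$ on the interior $\shelltrunc{10r_-}{r_+/2}{r_+/2}\times[0,T/4]$, producing the LHS of the proposition via the minimum of $e^g$ there; (iii) $g \le 2|x|^2/(C_0T) - r_- r_+/(4C_0T) + O(1)$ in the cutoff transition region (and on all of $\mathcal{C}$), so the commutator contribution is at most $C_0 e^{-r_- r_+/(4C_0T)} X$; and (iv) $g \le -r_- r_+/(4C_0T) + 2r_+^2/(C_0T) + O(1)$ at $t=0$ on $\shelltrunc{r_-}{r_+}{r_+}$, to produce the initial data term $Y$ (after dealing with the $|\grad u|^2$ contribution at $t=0$ via a time cutoff or by choosing $g$ so that the $t=0$ boundary contribution has the favorable sign). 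A natural ansatz adapted to the geometry is $g(t,x) = -\lambda\phi(r) + \mu(t)|z|^2 + \nu(t)$, where $\phi$ is a convex function on $[r_-,r_+]$ with slope of order $r_+/(C_0T)$ (providing the shell localization and the $r_-r_+$ scaling upon integrating the slope), $\mu(t)|z|^2$ is a classical backward-heat Carleman weight handling the axial directions, and $\nu(t)$ gives $\dd_t g$ extra freedom in the positivity of $LF$.

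The main obstacle is constructing $\phi$, $\mu$, $\nu$ so that (i)--(iv) hold simultaneously with the correct coefficients. Unlike the annular setting of \cite{ess,tao}, the weight here is not spherically symmetric: writing the Laplacian in $(r,z)$ coordinates introduces the cylindrical correction $\tfrac{d_1-1}{r}\dd_r g$ into $\Delta g$, $F$, and $LF$, and this must be controlled uniformly across $r\in[r_-,r_+]$. This forces $\phi$ to be slightly, but only slightly, convex—just enough to offset the $1/r$ terms—while keeping $\phi'\sim r_+/(C_0T)$ and the total variation $\phi(r_+)-\phi(r_-)\sim r_-r_+/(C_0T)$, so the pointwise weight comparisons emerge with the claimed exponents. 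The positivity of $LF = \dd_t F + \Delta F$ in the presence of the coupling between the $\phi(r)$, $\mu(t)|z|^2$, and $\nu(t)$ pieces of $g$—in particular checking that the cross-terms from the radial and axial parts of $|\grad g|^2$ do not destroy the required lower bound—is the computational heart of the proof, and where the hypothesis $r_-^2 \ge 4C_0 T$ will be used to guarantee that the relevant $1/r$ corrections are uniformly small.
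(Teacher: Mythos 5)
Your overall plan — apply \eqref{generalcarleman} to a cut-off copy of $u$ with a weight adapted to the cylindrical shell, verify $D^2g\gtrsim (C_0T)^{-1}\Id$ and $LF\gtrsim (C_0T)^{-2}$, then compare $e^g$ on the interior to $e^g$ in the transition region and at $t=0$ — is the right shape of argument. However, the construction of $g$ is where the whole proof lives, and you leave it entirely open; and the ansatz you propose, $g=-\lambda\phi(r)+\mu(t)|z|^2+\nu(t)$ with $\phi$ convex, is problematic as written. If $\lambda>0$ and $\phi$ is increasing (which ``slope $\sim r_+/(C_0T)$'' suggests), then $e^g$ is \emph{largest} near the inner boundary $r\sim r_-$, not on the target interior region $\shelltrunc{10r_-}{r_+/2}{r_+/2}$, which makes the resulting inequality vacuous; moreover $D^2(-\lambda\phi)=-\lambda D^2\phi\preceq0$ in the first $d_1$ variables, so the radial block of $D^2g$ has the wrong sign and item (i) fails. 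There is no purely $r$- and $z$-separated way to fix this ``slightly'': to have $D^2g\geq(C_0T)^{-1}\Id$ in the radial directions you need full-strength convexity of order $(C_0T)^{-1}$ there, not a perturbatively small $\phi''$.

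The paper avoids all of this with a very concrete weight: on $[0,T_0]$ (with $T_0\in[T/2,T]$ chosen by pigeonhole so that $\int e^{2|x|^2/C_0T}(T^{-1}|u(T_0)|^2+|\grad u(T_0)|^2)\lesssim T^{-1}X$) it takes
\begin{equation*}
g(x,t)=\frac{r_+(T_0-t)}{2C_0T^2}\,r+\frac1{C_0T}|x|^2,
\end{equation*}
and uses only a spatial cutoff $\psi$ supported in $\shelltrunc{r_-}{r_+}{r_+}$. The full quadratic $|x|^2/(C_0T)$ (not just $|z|^2$) supplies $D^2g\geq\frac2{C_0T}\Id$ for free, and the convexity of $r\mapsto r$ only helps; the linear-in-$r$ term with coefficient $\frac{r_+(T_0-t)}{2C_0T^2}$ carries the $r_-r_+$ scaling and vanishes at $t=T_0$, so the top-time boundary term is just $\int|\grad(\psi u)(T_0)|^2e^{|x|^2/C_0T}$, which is absorbed by the pigeonhole. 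Both $F\leq0$ and $LF\geq 4(C_0T^2)^{-1}$ follow by direct computation using $2(C_0T)^{1/2}\leq r_-\leq r\leq r_+$. The key weight comparisons are that $g\geq\frac{5r_-r_+}{4C_0T}$ on the interior and $g-\frac{2|x|^2}{C_0T}-\frac{5r_-r_+}{4C_0T}\leq-\frac{r_-r_+}{4C_0T}$ on the three transition regions (near $r_-$, near $r_+$, and near $|z|\sim r_+$; the last of these requires a separate maximization in $r$), from which the factor $e^{-r_-r_+/(4C_0T)}$ emerges after dividing by the interior infimum. Your requirement (iii) is stated without reference to this interior infimum, and your scheme for (iv) (``time cutoff or favorable sign'') does not address the unfavorable $|\grad u|^2$ term at $t=T_0$, which is precisely what the pigeonhole is for if one does not want to pay the extra time-derivative commutator. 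In short: the structure of your plan is fine, but the ansatz either has the wrong monotonicity in $r$ or requires non-negligible radial convexity that your ``slightly convex'' $\phi$ cannot supply, and the verification of (i)--(iv) — the actual proof — is absent.
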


\begin{proof}
We may assume $r_+\geq20r_-$. The pigeonhole principle implies the existence of a $T_0\in[T/2,T]$ such that
\eq{
\int_{\shelltrunc{r_-}{r_+}{r_+}}e^{2|x|^2/C_0T}(T^{-1}|u(T_0,x)|^2+|\grad u(T_0,x)|^2)dx\lesssim T^{-1}X.
}
With the weight
\eqn{
g(x,t)=\frac{r_+(T_0-t)}{2C_0T^2}r+\frac1{C_0T}|x|^2,
}
we apply the general Carleman inequality to $\psi u$, where $\psi$ is a smooth spatial cutoff supported in $\shelltrunc{r_-}{r_+}{r_+}$ that equals $1$ in $\shelltrunc{2r_-}{r_+/2}{r_+/2}$ and obeys $|\grad^j\psi(x)|\lesssim r_-^{-j}$ for $j=0,1,2$. Since the function $r$ is convex, we have
\eqn{
D^2g\geq\frac2{C_0T}Id
}
as quadratic forms. With $F=\dd_tg-\Delta g-|\grad g|^2$, we compute
\eqn{
F&=-\frac{r_+}{2C_0T^2}r-\frac{r_+(T_0-t)}{2C_0T^2}\frac{d_1-1}r-\frac{2(d_1+d_2)}{C_0T}-\frac{r_+^2(T_0-t)^2}{4C_0^2T^4}-\frac4{C_0^2T^2}|x|^2-\frac{2r_+(T_0-t)}{C_0^2T^3}r\\
&\leq0.
}
It follows that
\eqn{
LF=\frac{r_+^2(T_0-t)}{2C_0^2T^4}+\frac{2r_+}{C_0^2T^3}r-\frac{r_+(T_0-t)}{2C_0T^2}\frac{(d_1-1)(3-d_1)}{r^3}\\
-\frac{8(d_1+d_2)}{C_0^2T^2}-\frac{2r_+(T_0-t)}{C_0^2T^3}\frac{d_1-1}r.
}
By using the bounds $2(C_0T)^{1/2}\leq r_-\leq r\leq r_+$, one finds that
\eqn{
\frac{r_+(T_0-t)}{2C_0T^2}\frac{(d_1-1)(3-d_1)}{r^3}+\frac{8(d_1+d_2)}{C_0^2T^2}+\frac{2r_+(T_0-t)}{C_0^2T^3}\frac{d_1-1}r\leq\frac{3(d_1+d_2)r_+}{C_0^3T^3}r.
}
Therefore, letting $C_0\geq 3(d_1+d_2)$,
\eqn{
LF\geq\frac{r_+r}{C_0^2T^3}\geq\frac{4}{C_0T^2}.
}
Putting this information into the general inequality \eqref{generalcarleman}, we have
\eqn{
&\int_0^{T_0}\int_{\shelltrunc{2r_-}{\frac{r_+}2}{\frac{r_+}2}}\left(\frac{2}{C_0T^2}|u|^2+\frac4{C_0T}|\grad u|^2\right)e^gdxdt\\
&\quad\leq\frac12\int_0^{T_0}\int_{\mathbb R^{d_1+d_2}}|L(\psi u)|^2e^gdxdt+\int_{\mathbb R^{d_1+d_2}}|\grad(\psi u)(T_0,x)|^2e^{g(T_0,x)}dx\\
&\quad\quad+\frac12\int_{\mathbb R^{d_1+d_2}}|F(0,x)||\psi u(0,x)|^2e^{g(0,x)}dx.
}
In the region $\shelltrunc{2r_-}{\frac{r_+}2}{\frac{r_+}2}$, $\psi$ is identically $1$ so thanks to the pointwise bound on $Lu$, this part of the integral in the first term on the right-hand side can be absorbed into the left-hand side. Moreover, throughout all of $\mathcal C$, using the bounds on $\grad^j\psi$ and $r_-$,
\eqn{
|L(\psi u)|^2=|\psi Lu+2\grad\psi\cdot\grad u+(\Delta\psi)u|^2&\lesssim(C_0T)^{-2}|u|^2+(C_0T)^{-1}|\grad u|^2.
}
Similarly,
\eqn{
|\grad(\psi u)|^2=|\psi\grad u+\grad\psi u|^2\lesssim|\grad u|^2+(C_0T)^{-1}|u|^2.
}
By limiting the time interval for the integral on the left-hand side to $[0,T/4]$ and the $r$ interval to $[10r_-,r_+/2]$, we find that on this region of integration
\eqn{
g(x,t)\geq\frac{5r_-r_+}{4C_0T}.
}
Therefore
\eqn{
&e^{\frac{5r_-r_+}{4C_0T}}\int_0^{T/4}\int_{\shelltrunc{10r_-}{\frac{r_+}2}{\frac{r_+}2}}\left(\frac1{C_0T^2}|u|^2+\frac1{C_0T}|\grad u|^2\right)dxdt\\
&\quad\lesssim\int_0^{T_0}\int_{\shelltrunc{r_-}{2r_-}{\frac{r_+}2}\cup\shelltrunc{\frac{r_+}2}{r_+}{\frac{r_+}2}\cup\shelltrunc{r_-}{r_+}{\frac{r_+}2,r_+}}\left(\frac1{(C_0T)^2}|u|^2+\frac1{C_0T}|\grad u|^2\right)e^gdxdt\\
&\quad\quad+\int_{\shelltrunc{r_-}{r_+}{r_+}}\Big(|\grad u(T_0,x)|^2+\frac1{C_0T}|u(T_0,x)|^2\Big)e^{g(T_0,x)}\\
&\quad\quad+\int_{\shelltrunc{r_-}{r_+}{r_+}}|F(0,x)||u(0,x)|^2e^{g(0,x)}dx.
}
Consider the first term on the right-hand side. Within the region of integration, we have
\eqn{
g(x,t)-\frac{2|x|^2}{C_0T}-\frac{5r_-r_+}{4C_0T}&=\frac{r_+(T_0-t)}{2C_0T^2}r-\frac1{C_0T}|x|^2-\frac{5r_-r_+}{4C_0T}\leq-\frac{r_+r_-}{4C_0T}.
}
Indeed, in $\shell{r_-}{2r_-}\cup\shell{\frac{r_+}2}{r_+}$, this is maximized at $r=2r_-$, $|z|=0$ where the given upper bound holds. In $\{|z|\in[r_+/2,r_+]\}$, the quantity is clearly largest when $|z|=r_+/2$, so we have the upper bound
\eqn{
\frac{r_+}{2C_0T}r-\frac1{C_0T}\Big(r^2+\frac{r_+^2}4\Big)-\frac{5r_-r_+}{4C_0T}
}
which is largest when $r=r_+/4$, yielding an upper bound of $-\frac{3r_+^2}{16C_0T}\leq-\frac{r_+r_-}{4C_0T}$.

In conclusion, after dividing both sides of the inequality by $e^{\frac{5r_-r_+}{4C_0T}}$, the first term on the right-hand side has a weight bounded by $e^{\frac{2|x|^2}{C_0T}-\frac{r_+r_-}{4C_0T}}$ so the whole term can be absorbed into $e^{-\frac{r_+r_-}{4C_0T}}X/T$. Similarly, $e^{g(T_0,x)}=e^{\frac{|x|^2}{C_0T}}$ so by the definition of $T_0$, the second term on the right has the same upper bound. Thus we have
\eqn{
&\int_0^{T/4}\int_{\shelltrunc{10r_-}{\frac{r_+}2}{r_+/2}}\left(\frac1{C_0T^2}|u|^2+\frac1{C_0T}|\grad u|^2\right)dxdt\\
&\quad\lesssim e^{-\frac{r_+r_-}{4C_0T}}\left(T^{-1}X+\int_{\shelltrunc{r_-}{r_+}{r_+}}|F(0,x)||u(0,x)|^2e^{g(0,x)}dx\right).
}
To conclude, we easily have
\eqn{
|F(0,x)|\leq\frac{r_+^2}{C_0T^2}
}
and
\eqn{
e^{g(0)}\leq e^{\frac{3r_+^2}{2C_0T}}
}
when $x$ is restricted to $\shelltrunc{r_-}{r_+}{r_+}$. Therefore
\eqn{
|F(0,x)|e^{g(0)}\leq e^{\frac{2r_+^2}{C_0T}}T^{-1}
}
which completes the proof.
\end{proof}

The next Carleman inequality we quote directly from \cite{tao}.

\begin{prop}[Unique continuation Carleman inequality]\label{uniquecont}
Define the cylindrical spacetime region
\eqn{
\mathcal C=\{(t,x)\in\mathbb R\times\Rt:t\in[0,T],\,|x|\leq\rho\}.
}
Let $u:\mathcal C\to\Rt$ be a smooth function obeying \eqref{differentialinequality} on $\mathcal C$. Assume
\eqn{
\rho^2\geq4000T.
}
Then for any
\eqn{
0<t_1\leq t_0\leq\frac T{1000},
}
one has
\eqn{
\int_{t_0}^{2t_0}\int_{|x|\leq\frac \rho2}(T^{-1}|u|^2+|\grad u|^2)e^{-|x|^2/4t}dxdt\lesssim e^{-\frac{\rho^2}{500t_0}}X+t_0^{3/2}(et_0/t_1)^{O(\rho^2/t_0)}Y
}
where
\eqn{
X=\int_0^T\int_{|x|\leq \rho}(T^{-1}|u|^2+|\grad u|^2)dxdt
}
and
\eqn{
Y=\int_{|x|\leq \rho}|u(0,x)|^2t_1^{-3/2}e^{-|x|^2/4t_1}dx.
}
\end{prop}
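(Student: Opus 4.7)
The plan is to apply the general Carleman inequality \eqref{generalcarleman} with a weight modeled on the logarithm of the heat kernel, so that $e^g$ at the initial time reproduces the Gaussian $t_1^{-3/2}e^{-|x|^2/4t_1}$ in $Y$ and at intermediate times $t\in[t_0,2t_0]$ reproduces the Gaussian factor $e^{-|x|^2/4t}$ on the left-hand side. A natural ansatz is
\eqn{
g(x,t)=-\frac{|x|^2}{4(t+\tau)}+\alpha\log(t+\tau)+\beta t,
}
where $\tau\in(0,t_1]$ regularizes the singularity at $t=0$, $\alpha$ is a negative parameter of magnitude $\sim\rho^2/t_0$ (chosen so that the exponentiated logarithmic term produces the factor $(et_0/t_1)^{O(\rho^2/t_0)}$ appearing in the statement), and $\beta$ is a constant securing positivity of $LF$. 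With this weight the $|x|^2$ contributions from $\dd_tg$ and $|\grad g|^2$ cancel exactly, so $F=(\alpha+3/2)/(t+\tau)+\beta$ depends only on $t$, and $LF=-(\alpha+3/2)/(t+\tau)^2\gtrsim\rho^2/(t_0T^2)\gg T^{-2}$ once $\alpha$ is sufficiently negative.

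Having fixed the weight, I would apply \eqref{generalcarleman} to $\psi u$, where $\psi$ is a smooth spatial cutoff equal to $1$ on $|x|\leq\rho/2$ and supported in $|x|\leq\rho$. The pointwise hypothesis \eqref{differentialinequality} gives $|Lu|^2\leq 2(C_0T)^{-2}|u|^2+2(C_0T)^{-1}|\grad u|^2$, and the positive lower bound on $LF$ allows the $|u|^2$ portion of $\frac12\int|L(\psi u)|^2e^g$ to be absorbed into the left-hand side. The commutator terms from $\psi$ are supported in the annulus $\rho/2\leq|x|\leq\rho$, where $e^g\leq\exp(-\rho^2/(16(t+\tau)))$; combined with $\rho^2\geq4000T$ and $t_0\leq T/1000$, this yields the first error term $e^{-\rho^2/(500t_0)}X$ after restricting the spatial integration on the left to $|x|\leq\rho/2$ and the time integration to $[t_0,2t_0]$. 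The $t=0$ boundary term from \eqref{generalcarleman} contributes $\frac12\int|F(0)||u(0)|^2e^{g(0)}dx$, and with the choices $\tau\sim t_1$, $\alpha\sim-\rho^2/t_0$ a direct evaluation gives $|F(0)|e^{g(0)}\lesssim t_0^{3/2}(et_0/t_1)^{O(\rho^2/t_0)}\cdot t_1^{-3/2}e^{-|x|^2/4t_1}$, matching the $Y$-term in the conclusion. The boundary contribution at $t=T$ is absorbed into $X$, possibly after pigeonholing a stopping time $T_0\in[2t_0,T/1000]$ as in the proof of Proposition \ref{backwardunique}.

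The main obstacle is the Hessian $D^2g=-\Id/(2(t+\tau))$, which is negative definite and therefore fails to contribute a positive $|\grad u|^2$ term on the left of \eqref{generalcarleman}. To recover the $|\grad u|^2$ portion of the left-hand side stated in the proposition, I expect to either apply the Carleman inequality separately to $\grad u$, which satisfies a companion differential inequality of the same form (obtained by differentiating \eqref{differentialinequality} and controlling the extra derivatives on $u$ via an interpolation argument), or to upgrade from an $L^2$ bound on $u$ to an $H^1$ bound on a slightly smaller spacetime region by exploiting parabolic smoothing for $L$. A secondary bookkeeping challenge is tracking how the two scales $t_0$ and $t_1$ enter through $\alpha\sim\rho^2/t_0$ and $\tau\sim t_1$ when evaluating $e^{g(0)}$, since this is precisely what produces the unusual polynomial-times-exponential factor $t_0^{3/2}(et_0/t_1)^{O(\rho^2/t_0)}$ in the final estimate.
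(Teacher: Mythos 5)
This proposition is not proved in the paper at all: the paper states explicitly, immediately before the statement, that it is quoted verbatim from \cite{tao}, so there is no in-paper proof to compare your attempt against. That said, your proposal can still be assessed on its own terms, and there is a genuine gap.

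Your weight ansatz $g=-\frac{|x|^2}{4(t+\tau)}+\alpha\log(t+\tau)+\beta t$ and the associated computations ($F$ spatially constant, $LF=-(\alpha+3/2)/(t+\tau)^2>0$ for $\alpha<-3/2$ large in magnitude, the absorption of the $|u|^2$ part of $|L(\psi u)|^2$, and the book-keeping of $\alpha\sim\rho^2/t_0$ and $\tau\sim t_1$ to produce the $(et_0/t_1)^{O(\rho^2/t_0)}$ factor) are all correct and point in the right direction. The problem is exactly the one you flag: $D^2g=-\tfrac{1}{2(t+\tau)}\Id<0$, so in \eqref{generalcarleman} the quantity $2D^2g(\grad u,\grad u)$ is a \emph{sink}, not a source, of $|\grad u|^2$. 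Moving it to the right yields a term $\int\frac{|\grad u|^2}{t+\tau}e^g$ which dominates the $\frac1{C_0T}|\grad u|^2 e^g$ that \eqref{differentialinequality} would let you absorb, since $t+\tau\lesssim T\ll C_0T$ throughout the cylinder. So with this weight one cannot even close the estimate for the $|u|^2$ portion of the left-hand side, let alone extract the $|\grad u|^2$ term that the proposition asserts.

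Your two candidate repairs are not in working order as stated. Applying the Carleman inequality to $\grad u$ requires a differential inequality for $\grad u$ of the same form as \eqref{differentialinequality}; differentiating the given inequality introduces $\grad^2 u$, which is not controlled by the hypotheses, so the system is not closed and the invoked ``interpolation argument'' would need to be made precise and is likely circular. Parabolic smoothing also does not help here: $L=\dd_t+\Delta$ is the \emph{backward} heat operator, which has no smoothing in forward time, and a weighted Caccioppoli estimate (multiplying by $u\,e^{-|x|^2/4t}\psi^2$) produces a cross term $\int u\,\grad u\cdot\frac{x}{2t}e^{-|x|^2/4t}\psi^2$ that is not absorbable when $|x|$ is as large as $O(\rho)$. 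So the central technical hurdle of the proof of \cite[Proposition 4.3 or Lemma 4.3]{tao} --- getting a positive $|\grad u|^2$ contribution from a Gaussian-type weight whose Hessian is pointwise negative, or acquiring that control by some auxiliary mechanism --- is left open in your proposal; you should consult the referenced source for how it is actually handled.
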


\section{Proof of Theorems \ref{regtheorem} and \ref{blowuptheorem}}\label{theorems}

Theorems \ref{regtheorem} and \ref{blowuptheorem} will follow without much difficulty from the following proposition.

\begin{prop}\label{main}
Let $u$ be as in Proposition \ref{epochs}, with $A\geq C_0$. Suppose that there exist $x_0\in\Rt$ and $N_0>0$ such that
\eqn{
|P_{N_0}u(t_0,x_0)|\geq A_1^{-1}N_0.
}
Then
\eqn{
TN_0^2\leq\exp(\exp(A_6^{O(1)})).
}
\end{prop}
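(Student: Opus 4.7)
The plan is to argue by contradiction in the spirit of \cite[Proposition 5.1]{tao}, exploiting Proposition \ref{regularityprop} to remove one exponential from the bound. Assume $TN_0^2 \geq \exp\exp(A_6^C)$ for a large $C=C(q)$; after rescaling and translating, take $t_0=0$, $N_0=1$. Iterating Proposition \ref{iteratedbp} at a geometric sequence of time scales $T_k\in[A_4,A_4^{-1}T]$ yields $M\asymp\log(TN_0^2)/\log A_4$ concentration bubbles $(t_k,x_k,N_k)$ with $t_k\in[-T_k,-A_3^{-1}T_k]$, $N_k=A_3^{O(1)}T_k^{-1/2}$, and $|P_{N_k}u(t_k,x_k)|\geq A_1^{-1}N_k$. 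The objective is to convert each bubble into an essentially disjoint contribution of size $\exp(-A_6^{O(1)})A_1^{-1}$ to $\|r^{\alpha_q}u(0)\|_{L_x^q}$, so that $A\geq\|r^{\alpha_q}u(0)\|_{L_x^q}\gtrsim M\exp(-A_6^{O(1)})A_1^{-1}$ forces $\log(TN_0^2)\lesssim\exp(A_6^{O(1)})$, contradicting the starting assumption.

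Each bubble is turned into a critical-norm contribution by a two-step Carleman argument run on the vorticity equation. Step A uses forward unique continuation (Proposition \ref{uniquecont}): by Proposition \ref{epochs}, there is an epoch of regularity of length $\gtrsim A^{-O(1)}T_k$ containing $t_k$ on which the pointwise bounds produce the differential inequality \eqref{differentialinequality} on a cylinder around $(t_k,x_k)$. The bubble translates, via Bernstein and short-time Duhamel applied to $\curl P_{N_k}u$, into an $L^2$ lower bound on $\omega$ concentrated near $x_k$ at scale $N_k^{-1}$. Applied in the standard contrapositive fashion, Proposition \ref{uniquecont} then forces a comparable weighted $L^2$ mass of $\omega$ at a slightly later time in some ball of radius $\sim T_k^{1/2}$, provided that the global term $X$ is small, which is achieved using the critical bound \eqref{critical} combined with the decomposition $u=\ulin_n+\unlin_n$ from Proposition \ref{hierarchy} to place $u$ in unweighted $L_x^2$.

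Step B propagates this mass forward in time to $t=0$ via backward uniqueness (Proposition \ref{backwardunique}), and this is where the paper's main improvement over \cite{tao} appears. Proposition \ref{regularityprop} supplies pointwise control on $u,\omega,\grad u,\grad\omega$ throughout a cylindrical shell $\mathcal C_k$ with inner radius $r_-\asymp A^{O(1)}T_k^{1/2}$ and comparable outer radius $r_+$, on the entire interval $[t_k,0]$, with no pigeonholing required. Applying Proposition \ref{backwardunique} with $d_1=2$, $d_2=1$ to $\omega$ on $\mathcal C_k$ (after the customary time reversal) transmits the mass from Step A to $t=0$. Because $r_\pm$ are only polynomial in $A$, the Carleman exponent $r_+r_-/(C_0T_k)=A^{O(1)}$ yields a loss of only $\exp(A_6^{O(1)})$, rather than the $\exp\exp(A^{O(1)})$ incurred in \cite{tao} where the analogous annulus of regularity had to be located by pigeonholing and had exponentially large radius. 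The final $L^2$ mass at $t=0$ lives in a cylindrical shell of axial scale $\sim T_k^{1/2}$; using Bernstein and absorbing the uniformly-controlled $\ulin_n$ background via Proposition \ref{hierarchy}, this translates into a bona fide $L^q$-norm contribution to $r^{\alpha_q}u(0)$ from that shell.

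Because different $T_k$ are dyadically separated, the contributions are supported in essentially disjoint cylindrical shells and sum additively, producing the contradiction outlined in the first paragraph. The main obstacle will be the bookkeeping in Step B: the parameters $r_-,r_+$ must be chosen compatibly with both Proposition \ref{regularityprop}'s distance-to-axis threshold and the output region of Step A, while keeping $r_+r_-/T_k=A^{O(1)}$ so that the Carleman loss remains single-exponential in $A_6$. A secondary issue is that both cases of \eqref{cases} must be handled uniformly, since $\alpha_q$ changes sign at $q=3$: in the axisymmetric $q\leq3$ regime with $\alpha_q<0$ one must confirm that the shell-by-shell contribution of $r^{\alpha_q}u(0)$ remains favorable compared with the Carleman weight loss, which is possible precisely because the axial scale $\sim T_k^{1/2}$ and the weight power $\alpha_q$ are under quantitative control coming from the chosen $r_\pm$.
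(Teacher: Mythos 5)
Your outline correctly follows the paper's strategy: back-propagate via Proposition~\ref{iteratedbp}, convert to vorticity mass and spread it with Proposition~\ref{uniquecont} inside an epoch of regularity, then use Proposition~\ref{regularityprop} to set up Proposition~\ref{backwardunique} on a cylindrical shell with $r_\pm = A^{O(1)}T_2^{1/2}$ so the Carleman losses stay single-exponential in $A_6$, and finally sum disjoint $L^q$ contributions from the shells $\mathcal S(A_5T_2^{1/2},A_6^2T_2^{1/2})$ to reach a contradiction with \eqref{critical}. The key improvement (no pigeonholing needed to locate the shell, thanks to Proposition~\ref{regularityprop}) is correctly identified.

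However, there is a genuine gap in Step B. You assert that Proposition~\ref{backwardunique} ``transmits the mass from Step A to $t=0$,'' implicitly treating the Carleman's global term as negligible. It is not. The Carleman inequality gives $Z\lesssim e^{-A_5^2A_6/4}X + e^{2A_6^2}Y$ where $X$ carries the weight $e^{2|x|^2/C_0T_2}$ over the entire cylinder. Even using the pointwise bounds from Proposition~\ref{regularityprop}, one only gets $X\lesssim T_2^{-1/2}e^{O(A_6^2)}$ because $|x|^2$ can be as large as $2A_6^2T_2$ in $\mathcal C$; since $A_6^2\gg A_5^2A_6$ once $C_0>2$, the prefactor $e^{-A_5^2A_6/4}$ does not defeat this. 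So the backward-uniqueness step genuinely produces a dichotomy (\eqref{casewon} vs.\ \eqref{casetoo}), and the case where $X$ is large requires significant additional work. In the paper this is handled by pigeonholing the large $X$-mass into a sub-shell $\mathcal S(\rho,2\rho;z,2z)$ and a short time slab, and then applying a \emph{second} unique-continuation Carleman (Proposition~\ref{uniquecont}) centered there to pull the concentration to $t=0$, carefully tracking that the pigeonholing losses are polynomial in $A_6$ and that the Carleman weight ratio stays $A_6^{O(1)}$. Without this branch, your argument does not close; the final $Y$-lower bound \eqref{finalconc} at $t=0$ is not directly available from Proposition~\ref{backwardunique} alone.

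A smaller imprecision: your description of Step A as forcing vorticity mass ``in some ball of radius $\sim T_k^{1/2}$'' omits that the point of unique continuation is to push the mass to locations with $|x_*-x_0|\geq A_4T_1^{1/2}$; that spatial displacement, combined with $|x_0|\lesssim A_1^{O(1)}N_0^{-1}$ (via Bernstein and \eqref{critical}), is what places the concentration inside the region $r\gtrsim A_5T_2^{1/2}$ where Proposition~\ref{regularityprop} applies. This should be made explicit.
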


\begin{proof}
By translating the solution, we may assume $t_0=(x_0)_3=0$. Note that we can shift to make the third component of $x_0$ vanish but not the first two as the norm $\X q\alpha T$ is not shift-invariant in those directions.

As shown in \cite{tao}, by propagating the concentration of $|P_{N_0}u|$ backward in time using Proposition \ref{iteratedbp}, converting it into a lower bound on the vorticity, and applying Proposition \ref{uniquecont} within an epoch of regularity provided by Proposition \ref{epochs}, one can deduce the following: for every $T_1\in[A_4N_0^{-2},A_4^{-1}T]$, and every $x_*\in\mathbb R^3$ with $|x_*-x_0|\geq A_4T_1^{1/2}$, we have the concentration
\eqn{
\int_{B(x_*,|x_*|/2)}|\omega(t,x)|^2dx\gtrsim\exp(-O(A_5^3|x_*|^2/T_1))T_1^{-1/2}
}
for all $t\in I$ where $I\subset[-T_1,-A_3^{-O(1)}T_1]$ is a time interval with $|I|=A_3^{-O(1)}T_1$. (We do not repeat these arguments because they hold in our setting without modification.)

In order to make use of this lower bound, we need some control on the location of $x_0$. As in the proof of Proposition \ref{regularityprop}, letting $\thing_q=\frac2q$ in the case where $q\in(2,3]$ with $u$ axisymmetric and $\thing_q=\frac3q$ in the case where $3<q<\infty$, we compute using \eqref{bernstein} and \eqref{critical}
\eqn{
A_1^{-1}N_0\leq|P_{N_0}u(t_0,x_0)|&\leq r(x_0)^{-1+\thing_q}\|r^{1-\thing_q}P_{N_0}u(t_0)\|_{L_x^\infty(\Rt)}\\
&\lesssim r(x_0)^{-1+\thing_q}N_0^{\thing_q}A
}
and therefore
\eqn{
|x_0|\lesssim A_1^{O(1)}N_0^{-1}\leq A_3^{-1}T_1^{1/2}.
}
Thus we deduce the lower bound
\eq{\label{conc1}
\int_{-T_1}^{-A_4^{-1}T_1}\int_{\shelltrunc{R}{10R}{10R}}|\omega(t,x)|^2dxdt\gtrsim\exp(-O(A_5^3R^2/T_1))T_1^{1/2}
}
for any $R\geq A_4^2T_1^{1/2}$, since the domain of this integral necessarily contains a ball $B(x_*,|x_*|/2)$ such that $2A_4T_1^{1/2}\leq|x_*|\lesssim R$. In order to propagate this concentration forward in time, we need some regularity on $u$ and $\omega$. For any $T_2\in[A_4^2N_0^{-2},A_4^{-1}T]$, by Proposition \ref{regularityprop}, we have
\eq{\label{regularity}
|\grad^ju(x,t)|\leq T_2^{-\frac{1+j}2}A_5^{-1/O(1)},\quad|\grad^j\omega(x,t)|\leq T_2^{-\frac{2+j}2}A_5^{-1/O(1)}
}
for $j=0,1$ and all $(t,x)\in[-T_2,0]\times\{r\geq A_5T_2^{1/2}\}$. This allows us to apply Proposition \ref{backwardunique} on $[0,T_2/C_0]$ with $r_-=A_5^2T_2^{1/2}$, $r_+=A_6T_2^{1/2}$, and $u$ replaced by the function
\eqn{
(t,x)\mapsto\omega(-t,x).
}
The vorticity equation
\eq{\label{vorticity}
\dd_t\omega-\Delta\omega=\omega\cdot\grad u-u\cdot\grad\omega
}
along with the coefficient bounds for the right-hand side coming from \eqref{regularity} imply \eqref{differentialinequality}. Letting
\eqn{
X=\int_{-T_2/C_0}^0\int_{\shelltrunc{A_5^2T_2^{1/2}}{A_6T_2^{1/2}}{A_6T_2^{1/2}}}e^{2|x|^2/T_2}(T_2^{-1}|\omega|^2+|\grad\omega|^2)dxdt,
}
\eqn{
Y=\int_{\shelltrunc{A_5^2T_2^{1/2}}{A_6T_2^{1/2}}{A_6T_2^{1/2}}}|\omega(0,x)|^2dx,
}
and
\eqn{
Z=T_2^{-1}\int_{-T_2/4C_0}^0\int_{\shelltrunc{10A_5^2T_2^{1/2}}{A_6T_2^{1/2}/2}{A_6T_2^{1/2}/2}}|\omega(x,t)|^2dxdt,
}
the Carleman estimate gives
\eqn{
Z\lesssim e^{-A_5^2A_6/4}X+e^{2A_6^2}Y.
}
From \eqref{conc1}, we have
\eqn{
Z\gtrsim T_2^{-1/2}e^{-O(A_5^5)}.
}
Thus either
\eq{\label{casewon}
X\gtrsim T_2^{-1/2}e^{A_6}
}
or
\eq{\label{casetoo}
Y\gtrsim T_2^{-1/2}e^{-3A_6^2}.
}
First let us assume that the concentration comes from \eqref{casewon} which is the harder case. Then
\eqn{
\int_{-T_2/C_0}^0\int_{\shelltrunc{A_5^2T_2^{1/2}}{A_6T_2^{1/2}}{A_6T_2^{1/2}}}e^{2|x|^2/T_2}(T_2^{-1}|\omega|^2+|\grad\omega|^2)dxdt\gtrsim T_2^{-1/2}e^{A_6}.
}
By \eqref{regularity}, the integrand is bounded by $T_2^{-3}e^{4A_6^2}$ and the region of integration in the $(t,x_1,x_2)$ variables has volume $O(A_6^2T_2^2)$. Therefore the range of $x_3$ in the integral can be narrowed without changing the inequality to
\eqn{
\int_{-T_2/C_0}^0\int_{\shelltrunc{A_5^2T_2^{1/2}}{A_6T_2^{1/2}}{e^{-A_6^3}T_2^{1/2},A_6T_2^{1/2}}}e^{2|x|^2/T_2}(T_2^{-1}|\omega|^2+|\grad\omega|^2)dxdt\\\gtrsim T_2^{-1/2}e^{A_6}.
}
The region $\shelltrunc{A_5^2T_2^{1/2}}{A_6T_2^{1/2}}{e^{-A_6^3}T_2^{1/2},A_6T_2^{1/2}}$ can be covered by $O(A_6^4)$ sets of the form $\shelltrunc\rho{2\rho}{z,2z}$, so by the pigeonhole principle there exist $\rho\in[A_5^2T_2^{1/2},A_6T_2^{1/2}]$ and $|z|\in[e^{-A_6^3}T_2^{1/2},A_6T_2^{1/2}]$ such that
\eqn{
\int_{-T_2/C_0}^0\int_{\shelltrunc\rho{2\rho}{z,2z}}e^{2|x|^2/T_2}(T_2^{-1}|\omega|^2+|\grad\omega|^2)dxdt\gtrsim T_2^{-1/2}e^{A_6/2}.
}
Therefore,
\eqn{
\int_{-T_2/C_0}^0\int_{\shelltrunc\rho{2\rho}{z,2z}}(T_2^{-1}|\omega|^2+|\grad\omega|^2)dxdt\gtrsim T_2^{-1/2}\exp(-O(\rho^2+z^2)/T_2).
}
By an analogous argument using \eqref{regularity}, the upper time limit in the integral can be shortened to
$-\frac{T_2^{5/2}}{\rho^2z}e^{-O(\rho^2+z^2)/T_2}$ which, by Young's inequality, is less than $-T_2e^{-O(\rho^2+z^2)/T_2}$. Thus
\eqn{
\int_{-T_2/C_0}^{-e^{-O(\rho^2+z^2)/T_2}T_2}\int_{\shelltrunc\rho{2\rho}{z,2z}}(T_2^{-1}|\omega|^2+|\grad\omega|^2)dxdt\gtrsim T_2^{-1/2}\exp(-O(\rho^2+z^2)/T_2).
}
The interval $[-T_2/C_0,-e^{-O(\rho^2+z^2)/T_2}T_2]$ can be covered by $O((\rho^2+z^2)/T_2)$ intervals of the form $[-2t_0,-t_0]$ so by the pigeonhole principle, there exists a $t_0\in[e^{-O(\rho^2+z^2)/T_2}T_2,T_2/C_0]$ such that
\eqn{
\int_{-2t_0}^{-t_0}\int_{\shelltrunc\rho{2\rho}{z,2z}}(T_2^{-1}|\omega|^2+|\grad\omega|^2)dxdt\gtrsim T_2^{-1/2}\exp(-O(\rho^2+z^2)/T_2).
}
Moreover, since $t_0\geq e^{-O(\rho^2+z^2)/T_2}T_2$, the spatial domain of integration can be covered by $e^{O(\rho^2+z^2)/T_2}T_2^{-3/2}\rho^2z$, which again is smaller than $e^{O(\rho^2+z^2)/T_2}$, balls of radius $t_0^{1/2}$. Therefore there exists an $x_*$ in the region $\shelltrunc\rho{2\rho}{z,2z}$ such that
\eq{\label{concentrate}
\int_{-2t_0}^{-t_0}\int_{B(x_*,t_0^{1/2})}(T_2^{-1}|\omega|^2+|\grad\omega|^2)dxdt\gtrsim T_2^{-1/2}\exp(-O(|x_*|^2)/T_2).
}
From here we apply Proposition \ref{uniquecont} to the function
\eqn{
(t,x)\mapsto\omega(-t,x_*+x)
}
on the interval $[0,1000t_0]$ with $\rho_{\text{carleman}}=C_0^{1/4}(t_0/T_2)^{1/2}|x_*|$ and $t_1=t_0$. Note that $r\leq|x_*|/C_0^{1/4}$ and $\rho_{\text{carleman}}\geq A_5^2T_2^{1/2}$ imply that $B(x_*,r)$ is contained in the region of regularity guaranteed by \eqref{regularity}. Therefore
\eq{\label{prime}
Z'\lesssim e^{-C_0^{1/2}|x_*|^2/500T_2}X'+t_0^{3/2}e^{O(C_0^{1/2}|x_*|^2/T_2)}Y'
}
where
\eqn{
X'=\int_{-1000t_0}^0\int_{B(x_*,C_0^{1/4}(t_0/T_2)^{1/2}|x_*|)}(t_0^{-1}|\omega|^2+|\grad\omega|^2)dxdt,
}
\eqn{
Y'=\int_{B(x_*,C_0^{1/4}(t_0/T_2)^{1/2}|x_*|)}|\omega(0,x)|^2t_0^{-3/2}e^{-|x-x_*|^2/4t_0}dx,
}
and, since $t_0^{1/2}\leq r/2$,
\eqn{
Z'=\int_{-2t_0}^{-t_0}\int_{B(x_*,t_0^{1/2})}(t_0^{-1}|\omega|^2+|\grad\omega|^2)dxdt.
}
By \eqref{concentrate}, we have
\eqn{
Z'\gtrsim T_2^{-1/2}\exp(-O(|x_*|^2)/T_2).
}
Using \eqref{regularity},
\eqn{
e^{-C_0^{1/2}|x_*|^2/500T_2}X'&\lesssim e^{-C_0^{1/2}|x_*|^2/500T_2}C_0^{3/4}t_0^{3/2}T_2^{-7/2}|x_*|^3\lesssim e^{-C_0^{1/2}|x_*|^2/1000T_2}T_2^{-1/2}.
}
Therefore within \eqref{prime}, the $X'$ term is negligible compared to the $Z'$ term and we are left with
\eqn{
\int_{B(x_*,C_0^{1/4}(t_0/T_2)^{1/2}|x_*|)}|\omega(0,x)|^2e^{-|x-x_*|^2/4t_0}dx\gtrsim \exp(-O(C_0^{1/2}|x_*|^2/T_2))T_2^{-1/2}.
}
It follows that
\eqn{
\int_{B(x_*,C_0^{-1/4}|x_*|)}|\omega(0,x)|^2dx\gtrsim \exp(-O(A_6^3))T_2^{-1/2}
}
for some $x_*$ in $\shelltrunc{A_5^2T_2^{1/2}}{2A_6T_2^{1/2}}{e^{-A_6^3}T_2^{1/2},2A_6T_2^{1/2}}$.
In conclusion,
\eq{\label{finalconc}
\int_{\shelltrunc{A_5T_2^{1/2}}{A_6^2T_2^{1/2}}{A_6^2T_2^{1/2}}}|\omega(0,x)|^2dx\gtrsim \exp(-O(A_6^3))T_2^{-1/2}
}
for all $T_2\in[A_4^2N_0^{-2},A_4^{-1}T]$.
If instead of \eqref{casewon} we had \eqref{casetoo}, then \eqref{finalconc} is immediate.

Next we convert \eqref{finalconc} back into a lower bound on the velocity. By the pigeonhole principle, there exists an $x_*$ in $\shelltrunc{A_5T_2^{1/2}}{A_6^2T_2^{1/2}}{A_6^2T_2^{1/2}}$ where
\eqn{
|\omega(0,x_*)|\gtrsim\exp(-O(A_6^3))T_2^{-1}.
}
The gradient estimate in \eqref{regularity} implies that this concentration persists up to a distance of at least $\exp(-O(A_6^3))T_2^{1/2}$ from $x_*$, and therefore
\eqn{
\left|\int_{\mathbb R^3}\omega(0,x_*-\rho y)\phi(y)dy\right|\gtrsim\exp(-O(A_6^3))T_2^{-1}
}
for a bump function $\phi$ supported in $B(0,1)$, for some $\rho=\exp(-O(A_6^3))T_2^{1/2}$. Then writing $\omega=\curl u$ and integrating by parts,
\eqn{
\left|\int_{\mathbb R^3}u(0,x_*-\rho y)\curl\phi dy\right|\geq\exp(-O(A_6^3))T_2^{-1/2}.
}
Then by H\"older's inequality,
\eqn{
\int_{B(0,1)}|u(0,x_*-\rho y)|^qdy\gtrsim\exp(-O(A_6^3))T_2^{-q/2}.
}
Within $B(x_*,\rho)$, since $\rho\leq\frac1{100}r(x_*)$, $r$ is comparable to $r(x_*)\in[A_5T_2^{1/2},A_6^2T_2^{1/2}]$. Therefore
\eq{\label{concentration}
\int_{B(x_*,\rho)}r^{q-3}|u(0,x)|^qdx\gtrsim\exp(-O(A_6^3)).
}
Since such an $x_*$ appears within every set $\shell{A_5^2T_2^{1/2}}{2A_6T_2^{1/2}}$, and $T_2$ can take any value in $[A_4^2N_0^{-2},A_4^{-1}T]$, there are at least $\log(TN_0^2)/\log A_6$ disjoint concentrations of the form \eqref{concentration}. Therefore
\eqn{
\frac{\log(TN_0^2)}{\log A_6}\exp(-O(A_6^3))\lesssim\int_{\Rt}r^{q-3}|u(0,x)|^qdx\leq A^q
}
by \eqref{concentration} and \eqref{critical}, and the desired conclusion follows.
\end{proof}

\begin{proof}[Theorem \ref{regtheorem}]
Once again, we can roughly follow \cite{tao}, but we must be a bit more careful due to our slightly worse control of $\ulin_n$. By increasing $A$, we can make $A\geq C_0$. By rescaling, it suffices to prove the theorem with $t=1$. Proposition \ref{main} implies that
\eq{\label{great}
\|P_Nu\|_{L_{t,x}^\infty([1/2,1]\times\Rt)}\leq A_1^{-1}N
}
whenever $N\geq N_*$, where
\eqn{
N_*=\exp(\exp(A_7)).
}
We apply the decomposition $u=\ulin_n+\unlin_n$ on $[0,1]$ so that on $[1/2,1]$, we have all the estimates from Proposition \ref{hierarchy}. Taking the curl, we analogously have $\omega=\wlin_n+\wnlin_n$ and define
\eqn{
E(t)=\frac12\int_{\Rt}|\wnlin_n(t,x)|^2dx
}
where we fix an $n$ sufficiently large so that \eqref{unlinq>3} gives bounds on $\unlin_n$ for $p\in[\min(q',\frac q2),3)$.

With \eqref{vorticity} and integration by parts, we compute
\eqn{
\frac d{dt}E(t)=-Y_1+Y_2+Y_3+Y_4+Y_5+Y_6+Y_7+Y_8
}
where
\eqn{
Y_1(t)&=\int_\Rt|\grad\wnlin_n|^2dx,\\
Y_2(t)&=-\int_\Rt\wnlin_n\cdot(\ulin_n\cdot\grad\wlin_n)dx,\\
Y_3(t)&=-\int_\Rt\wnlin_n\cdot(\unlin_n\cdot\grad\wlin_n)dx,\\
Y_4(t)&=\int_\Rt\wnlin_n\cdot(\wnlin_n\cdot\grad\unlin_n)dx,\\
Y_5(t)&=\int_\Rt\wnlin_n\cdot(\wnlin_n\cdot\grad\ulin_n)dx,\\
Y_6(t)&=\int_\Rt\wnlin_n\cdot(\wlin_n\cdot\grad\unlin_n)dx,\\
Y_7(t)&=\int_\Rt\wnlin_n\cdot(\wlin_n\cdot\grad\ulin_n)dx,\\
Y_8(t)&=-\int_\Rt\wnlin_n\cdot\curl(\ulin_n\cdot\grad\ulin_n)dx.
}
By H\"older's inequality, \eqref{unlinq>3}, and \eqref{ulin-bound}, we have for $t\in[1/2,1]$
\eqn{
|Y_2(t)|&\lesssim\|\wnlin_n\|_{L_t^\infty L_x^{p'}([1/2,1]\times\Rt)}\|\ulin_n\|_{L_t^\infty L_x^{2p}([1/2,1]\times\Rt)}\|\grad\wlin_n\|_{L_t^\infty L_x^{2p}([1/2,1]\times\Rt)}\\
&\lesssim A^{O(1)},
}
taking $p=\max(q,\frac q{q-2})$. The same argument applies for $Y_7$ and $Y_8$. For $Y_3$, by H\"older's inequality and \eqref{ulin-bound}, we have
\eqn{
|Y_3(t)|\lesssim E(t)^{1/2}\|\unlin_n\|_{L_t^\infty L_x^{2}([1/2,1]\times\Rt)}\|\grad\wlin_n\|_{L_{t,x}^\infty([1/2,1]\times\Rt)}\lesssim E^{1/2}A^{O(1)}.
}
For $Y_5$, H\"older's inequality and \eqref{ulin-bound} easily give
\eqn{
|Y_5(t)|\lesssim E(t)\|\grad\ulin_n\|_{L_{t,x}^\infty([1/2,1]\times\Rt)}\lesssim E(t)A^{O(1)}.
}
The same is true for $Y_6$, since Plancherel's theorem and incompressibility imply
\eqn{
\|\grad\unlin_n\|_{L_x^2(\Rt)}\lesssim\|\wnlin_n\|_{L_x^2(\Rt)}.
}
From here one proceeds to estimate $Y_4$ and conclude exactly as in \cite{tao}, making use of \eqref{integral} and \eqref{great}. (In that paper the analogous term is called $Y_3$.)
\end{proof}

As in \cite{tao}, Theorem \ref{blowuptheorem} follows immediately from Theorem \ref{regtheorem} combined with essentially any classical blowup criterion.

\bibliographystyle{abbrv}
\bibliography{references}

\begin{thebibliography}{10}

\bibitem{albritton}
D.~Albritton.
\newblock Blow-up criteria for the {Navier-Stokes} equations in non-endpoint
  critical {Besov} spaces.
\newblock {\em Analysis \& PDE}, 11(6):1415--1456, 2018.

\bibitem{ab}
D.~Albritton and T.~Barker.
\newblock Global weak {Besov} solutions of the {Navier-Stokes equations} and
  applications.
\newblock {\em Archive for Rational Mechanics and Analysis}, 232(1):197--263,
  2019.

\bibitem{bpnew}
T.~Barker and C.~Prange.
\newblock Mild criticality breaking for the {Navier-Stokes} equations.
\newblock {\em Journal of Mathematical Fluid Mechanics}, 23(3):1--12, 2021.

\bibitem{bp}
T.~Barker and C.~Prange.
\newblock Quantitative regularity for the {Navier-Stokes} equations via spatial
  concentration.
\newblock {\em Communications in Mathematical Physics}, pages 1--76, 2021.

\bibitem{bkm}
J.~T. Beale, T.~Kato, and A.~Majda.
\newblock Remarks on the breakdown of smooth solutions for the {3-D} {Euler}
  equations.
\newblock {\em Communications in Mathematical Physics}, 94(1):61--66, 1984.

\bibitem{ckn}
L.~Caffarelli, R.~Kohn, and L.~Nirenberg.
\newblock Partial regularity of suitable weak solutions of the {Navier-Stokes}
  equations.
\newblock {\em Communications on pure and applied mathematics}, 35(6):771--831,
  1982.

\bibitem{chae}
D.~Chae and J.~Lee.
\newblock On the regularity of the axisymmetric solutions of the
  {Navier-Stokes} equations.
\newblock {\em Mathematische Zeitschrift}, 239(4):645--671, 2002.

\bibitem{cp}
J.-Y. Chemin and F.~Planchon.
\newblock Self-improving bounds for the {Navier-Stokes} equations.
\newblock {\em Bulletin de la Soci{\'e}t{\'e} Math{\'e}matique de France},
  140(4):583--597, 2012.

\bibitem{csyt}
C.-C. Chen, R.~M. Strain, T.-P. Tsai, and H.-T. Yau.
\newblock Lower bounds on the blow-up rate of the axisymmetric {Navier--Stokes}
  equations {II}.
\newblock {\em Communications in Partial Differential Equations},
  34(3):203--232, 2009.

\bibitem{chen}
H.~Chen, D.~Fang, and T.~Zhang.
\newblock Regularity of {3D} axisymmetric {Navier-Stokes} equations.
\newblock {\em Discrete \& Continuous Dynamical Systems}, 37(4):1923, 2017.

\bibitem{ess}
L.~Escauriaza, G.~A. Seregin, and V.~{\v{S}}ver{\'a}k.
\newblock {$L_{3,\infty}$}-solutions of the {Navier-Stokes} equations and
  backward uniqueness.
\newblock {\em Russian Mathematical Surveys}, 58(2):211--250, 2003.

\bibitem{gkp}
I.~Gallagher, G.~S. Koch, and F.~Planchon.
\newblock A profile decomposition approach to the {$L_t^\infty(L_x^3)$
  {Navier--Stokes}} regularity criterion.
\newblock {\em Mathematische Annalen}, 355(4):1527--1559, 2013.

\bibitem{gkp2}
I.~Gallagher, G.~S. Koch, and F.~Planchon.
\newblock Blow-up of critical {B}esov norms at a potential {Navier-Stokes}
  singularity.
\newblock {\em Communications in Mathematical Physics}, 343(1):39--82, 2016.

\bibitem{knss}
G.~Koch, N.~Nadirashvili, G.~A. Seregin, V.~{\v{S}}ver{\'a}k, et~al.
\newblock Liouville theorems for the {Navier--Stokes} equations and
  applications.
\newblock {\em Acta Mathematica}, 203(1):83--105, 2009.

\bibitem{lady}
O.~A. Ladyzhenskaya.
\newblock On the uniqueness and on the smoothness of weak solutions of the
  {Navier--Stokes} equations.
\newblock {\em Zapiski Nauchnykh Seminarov POMI}, 5:169--185, 1967.

\bibitem{lei}
Z.~Lei and Q.~S. Zhang.
\newblock A {L}iouville theorem for the axially-symmetric {Navier--Stokes}
  equations.
\newblock {\em Journal of Functional Analysis}, 261(8):2323--2345, 2011.

\bibitem{leray}
J.~Leray.
\newblock Sur le mouvement d'un liquide visqueux emplissant l'espace.
\newblock {\em Acta mathematica}, 63:193--248, 1934.

\bibitem{ms}
P.~Maremonti and S.~Shimizu.
\newblock Global existence of solutions to 2-{D} {Navier-Stokes} flow with
  non-decaying initial data in exterior domains.
\newblock {\em Journal of Mathematical Fluid Mechanics}, 20(3):899--927, 2018.

\bibitem{pan}
X.~Pan.
\newblock Regularity of solutions to axisymmetric {Navier-Stokes} equations
  with a slightly supercritical condition.
\newblock {\em Journal of Differential Equations}, 260(12):8485--8529, 2016.

\bibitem{prodi}
G.~Prodi.
\newblock Un teorema di unicita per le equazioni di {Navier-Stokes}.
\newblock {\em Annali di Matematica pura ed applicata}, 48(1):173--182, 1959.

\bibitem{seregin}
G.~Seregin.
\newblock A certain necessary condition of potential blow up for
  {Navier-Stokes} equations.
\newblock {\em Communications in Mathematical Physics}, 3(312):833--845, 2012.

\bibitem{seregin2020}
G.~Seregin.
\newblock Local regularity of axisymmetric solutions to the {Navier-Stokes}
  equations.
\newblock {\em Analysis and Mathematical Physics}, 10(4):1--20, 2020.

\bibitem{ss}
G.~Seregin and V.~{\v{S}}ver{\'a}k.
\newblock On type {I} singularities of the local axi-symmetric solutions of the
  {Navier--Stokes} equations.
\newblock {\em Communications in Partial Differential Equations},
  34(2):171--201, 2009.

\bibitem{sz}
G.~A. Seregin and D.~Zhou.
\newblock Regularity of solutions to the {Navier-Stokes} equations in
  {$B_{\infty,\infty}^{-1}$}.
\newblock {\em Zap. Nauchn. Sem. POMI}, 477(0):119--128, 2018.

\bibitem{serrin}
J.~Serrin.
\newblock {\em On the interior regularity of weak solutions of the
  {Navier-Stokes} equations}.
\newblock Mathematics Division, Air Force Office of Scientific Research, 1961.

\bibitem{othertao}
T.~Tao.
\newblock Localisation and compactness properties of the {Navier--Stokes}
  global regularity problem.
\newblock {\em Analysis \& PDE}, 6(1):25--107, 2013.

\bibitem{tao}
T.~Tao.
\newblock Quantitative bounds for critically bounded solutions to the
  {Navier-Stokes} equations.
\newblock {\em arXiv preprint arXiv:1908.04958}, 2019.

\end{thebibliography}

\end{document}